\newtheorem{proposition}{Proposition}[section]
\newtheorem{theorem}[proposition]{Theorem}
\newtheorem{corollary}[proposition]{Corollary}
\newtheorem{lemma}[proposition]{Lemma}
\newtheorem*{bootstrap*}{Bootstrap Step}
\theoremstyle{definition}
\newtheorem{definition}[proposition]{Definition}
\newtheorem{remark}[proposition]{Remark}
\numberwithin{equation}{section}
\newtheorem{assumption}[proposition]{Assumption}
\definecolor{darklavender}{rgb}{0.56, 0.0, 1.0}
\definecolor{green}{rgb}{0.0, 0.5, 0.0}
\newcommand{\rob}[1]{\textcolor{black}{#1}}
\newcommand\eps{\varepsilon}
\def\Re{{\rm Re}}
\newcommand\de{{\partial}}
\newcommand\pp{{\upsilon}}
\newcommand{\hP}{\mathcal{P}}
\def\L{\lambda} 
\def\R{\mathbb{R}} 
\def\k{\textbf{k}} 
\def\W{\mathcal{W}} 
\def\Wbl{\mathcal{W}_{\rm{BL}}} 
\def\V{\mathcal{V}} 
\def\B{\mathcal{B}} 
\def\G{\chi}
\newcommand{\bcr}[1]{#1}
\newcommand{\bcb}[1]{#1}
\newcommand{\be}{\begin{equation}}
\newcommand{\ee}{\end{equation}}
\newcommand{{\kk}}{|\k|}
\newcommand{{\kko}}{|\k^0|}
\newcommand{\nn}{p}
\newcommand{\ta}{q}
\newcommand{\et}{\eta}
\newcommand{\bw}{{b.w.\;}}
\newcommand{\blbw}{{b.l.b.w.\;}}
\newcommand{\cha}{{\bf I}_{\mbox{\scriptsize{$y \ge 0$}}}}
\newcommand{\uu}{\textbf{u}}
\newcommand{\x}{\textbf{x}}
\patchcmd{\subsubsection}{\itshape}{\itshape\bfseries}{}{} 
\title[]{Reflection of internal gravity waves \\in the form of quasi-axisymmetric beams}
\author[R.\ Bianchini]{Roberta Bianchini}
\address{Consiglio Nazionale delle Ricerche, Istituto per le Applicazioni del Calcolo, 00185 Rome, Italy}
\email{roberta.bianchini@cnr.it}
\author[T.\ Paul]{Thierry Paul}
\address{Sorbonne Universit\'e, Laboratoire Jacques-Louis Lions \& CNRS, F-75005 Paris, France}
\email{paul@ljll.math.upmc.fr}
\subjclass[2010]{35Q30, 35Q35, 76B55, 76B70, 76E30, 35B30}
\keywords{Boussinesq equations, internal gravity waves, oblique reflection, Leray solution, density-dependent Navier-Stokes equations, beams, wave-packets}
\begin{document}
\maketitle

\begin{abstract} Preservation of the angle of reflection when an internal gravity wave hits a sloping boundary generates a focusing mechanism if the angle between the direction of propagation of the incident wave and the horizontal is close to the slope inclination (near-critical reflection). This paper provides an explicit description of the leading approximation of the unique Leray solution to the near-critical reflection of internal waves from a slope in the form of a beam wave. More precisely, our beam wave approach allows to construct a fully consistent and Lyapunov stable approximate solution, $L^2$ -close to the Leray solution, in the form of a beam wave, within a certain (nonlinear) time-scale. 
To the best of our knowledge, this is the first result where
a mathematical study of internal waves in terms of spatially localized beam waves is performed.\\
A beam wave is a linear superposition of rapidly oscillating plane waves, where the high frequency of oscillation is proportional to the inverse of a power of the small parameter measuring the weak amplitude of waves. \\
Being localized in the physical space thanks to rapid oscillations (and high variations of the modulus of the wavenumber), beams are physically more relevant than plane waves/packets of waves, whose wavenumber is nearly fixed (microloca\-li\-zed). At the mathematical level, this marks a strong difference between the previous plane waves/packets of waves analysis and our approach. \\
The main novelty of this work is to exploit the spatial localization of beam waves to exhibit a spatially localized, physically relevant solution and to improve the previous mathematical results from a twofold perspective: 1) our beam wave approximate solution is the sum of a finite number of terms, each of them is a consistent solution to the system and there is no artificial/non-physical corrector; 2) thanks to the absence of artificial correctors (used in the previous results) and to the special structure of the nonlinear term, we can push the expansion of our solution to next orders, so improving the accuracy and enlarging the consistency time-scale.

Finally, our results provide a set of initial conditions localized on rays, for which the Leray solution maintains approximately in $L^2$ the same localization.
\end{abstract}

\tableofcontents

\section{Introduction}
This work is dedicated to the study of \textit{beam-type} solutions to the two-dimensional Boussinesq system 
\begin{align}
\de_t u - b \sin \gamma + \de_x p -  \nu  \Delta u &= -\delta (u\de_x + w \de_y) u,\notag\\
\de_t w - b \cos \gamma + \de_y p -   \nu \Delta w & =  -\delta (u\de_x + w \de_y) w,\notag\\
\de_t b + u\sin \gamma  + w \cos \gamma -  \kappa  \Delta b & =  -\delta (u\de_x + w \de_y)b,\notag\\
\de_x u + \de_y w&=0,
\label{eq:system}
\end{align}

in the half space $\mathbb{R}^2_+$, with the parameters $\delta, \nu, \kappa>0$ {and $\gamma \in (0, \frac \pi 2)$}, endowed with the boundary conditions
\begin{align}
u_{|y=0}=w_{|y=0}=\de_y b_{|y=0}=0.\label{eq:cond-BL}
\end{align}
System \eqref{eq:system} describes, after a suitable rotation of coordinates, the phenomenon of internal gravity waves reflecting off a flat boundary inclined at an angle $\gamma$ with respect to the horizontal plane. Specifically, the coordinates $(x, y)$ in \eqref{eq:system} undergo a rotation of angle $\gamma$ from the standard Cartesian coordinates (as given in \eqref{eq:coord1}). Similarly, the variables $(u, w)$ represent a rotation of angle $\gamma$ of the horizontal and vertical components of the velocity field, as presented in \eqref{eq:variables-rotation}. Notably, system \eqref{eq:system} is an outcome of rotating the original two-dimensional Boussinesq system, presented in \eqref{eq:system-vel-nodamp} below, by an angle $\gamma$.

In \eqref{eq:system}, the viscosity/diffusion coefficients satisfy the following 
\begin{align}
\label{eq:viscosity-size}
 \nu :=  \eps \nu_0; \quad  \kappa := \eps \kappa_0,
\end{align}
where now $\eps$ is a dimensionless parameter. The Boussinesq equations \eqref{eq:system} model an important class of incompressible flows with variable density. In particular, they find a wide application in oceanography, see for instance \cite{Rieutord}, where both the incompressibility and small viscosity/diffusivity assumptions are very good approximations of the reality.\\

Before {stating} our mail results in Section \ref{sec:results}, let us dedicate the rest of this section to the presentation of the model, the motivation of our results and the comparison with the existing literature.  
\subsection*{Formal derivation of the Boussinesq equations} 
The starting point for the formal derivation of the Boussinesq system \eqref{eq:system} is represented by the non-homogeneous incompressible Navier-Stokes equations with diffusivity
\begin{equation}\label{eq:nonlin}
	\begin{aligned}
	 \rho(\de_t+\uu\cdot\nabla)\uu+\nabla  p&=-\rho \, \textbf{g} + \nu \Delta \uu,\\
	(\de_t+\uu\cdot\nabla)\rho&=\kappa \Delta \rho,\\
	\nabla\cdot \uu&=0, 
	\end{aligned}
\end{equation}
where $\textbf{x}=(x_1, x_2) \in \R^2$, the spatial gradient $\nabla=(\de_{x_1}, \de_{x_2})^T$, the unknowns $\rho=\rho(t, \textbf{x}),  \,\textbf{u}= \textbf{u}(t, \textbf{x})=(u_1, u_2 )^T,$  $p=p(t, \textbf{x})$ represent density, velocity field and scalar pressure respectively, while $\textbf{g}=(0, g)$ is the gravity vector and $\nu, \kappa >0$ are the viscosity and diffusivity coefficients. A derivation of those equations starting from the Navier-Stokes Fourier system, in the limit of the Oberbeck-Boussinesq approximation, is performed for instance in \cite{Danchin13}. It is customary to introduce some additional hypotheses to obtain the so-called Boussinesq equations. We explain this in the following. In many physical systems of non-homogeneous fluids, the variations of the density profile are negligible compared to its (constant) average. One then assumes that the equilibrium stratification is a \emph{stable} profile $\bar \rho (x, y) = \bar \rho(y)$, with $\de_y\bar \rho (y) <0$. 
Among all the possible stratification's equilibria, one usually takes into account
locally affine profiles, so that $\de_y\bar{\rho}(y)$ is constant, see \cite{BDSR19, Lannes-Saut} and references therein.
One linearizes equations \eqref{eq:nonlin} around the \emph{hydrostatic equilibrium}, namely a steady solution with zero velocity field such that
\begin{align*}
(\rho, u, w, p)=(\bar \rho(y), 0, 0,  \bar p(y)),
\end{align*}
where $\bar p'(y)=-g \bar \rho$. More precisely, we consider the following expansions.
\begin{align*}
\rho(t,\x)&=\bar \rho(y)+ \tilde{\rho}(t, \x); \notag\\
u_1(t,\x)&=\tilde{u}_1(t, \x ), \; u_2(t,\x)=\tilde{u}_2(t, \x); \notag\\
p(t,\x)&=  \bar p(y)+\rho_0\tilde{P}(t,\x),
\end{align*}
with $\bar \rho (y)=\rho_0+r(y)$, where $\rho_0$ is the (constant) averaged density and $r(y)$ is a function of the vertical coordinate, such that $r'(y) < 0$. Plugging the previous expansions in system \eqref{eq:nonlin}, {one applies} the \emph{Boussinesq approximation}, see \cite{Rieutord}, which consists in neglecting density variations in all the terms but the one involving gravity. In other words, in the Boussinesq regime, the restoring force of equilibrium's fluctuations is gravity: it is an application of \emph{Archimedes' principle}.
Since gravity has a direct impact on the equation satisfied by the vertical velocity $u_2$, one focuses on it. After plugging the above expansions into it, one obtains 
\begin{align*}
(\bar \rho (y) + \tilde \rho) ( \partial_t \tilde u_2 + \tilde{\textbf{u}} \cdot \nabla \tilde u_2 ) - g \bar \rho (y) + \rho_0 \partial_y \tilde P = \nu \Delta \tilde u_2 -g \bar \rho (y) - g \tilde \rho,
\end{align*}
which yields
\begin{align*}
\partial_t \tilde u_2 + \frac{\rho_0 \partial_y \tilde P}{(\bar \rho (y) + \tilde \rho)} = \frac{\nu}{\bar \rho (y) + \tilde \rho} \Delta \tilde u_2  - g \frac{\tilde \rho}{(\bar \rho (y)+\tilde \rho)} - \tilde{\textbf{u}} \cdot \nabla \tilde u_2.
\end{align*}
Relying on the Boussinesq hypothesis, one neglects all density variations but the gravity term $g \tilde \rho$. Then we replace $\bar \rho(y) + \tilde \rho$ in the above equation by the averaged constant density $\rho_0$. Now, denoting $\tilde \nu:= \frac{\nu}{\rho_0}$, the equation satisfied by the fluctuation of the vertical velocity reads
\begin{align*}
\partial_t \tilde u_2 +\partial_y \tilde P=\tilde \nu \Delta \tilde u_2  - g \tilde \rho - \tilde{\textbf{u}} \cdot \nabla \tilde u_2.
\end{align*}
Applying the same reasoning to the density equation, one gets
\begin{align*}
(\de_t+ \tilde \uu \cdot \nabla) \tilde \rho + \tilde u_2 \de_y \bar \rho(y) = \kappa \Delta \tilde \rho. 
\end{align*}
Introducing the buoyancy variable $b:=\frac{g}{\rho_0} \rho$ and dropping the \emph{tilde} $\tilde{}$ for lightening the notation, we obtain
 \begin{equation}\label{eq:system-vel-nodamp}
\begin{aligned}
\partial_t b - N^2  u_2 &={\kappa \Delta b}-\textbf{u} \cdot \nabla b, \\
\partial_t u_1 + \partial_{x_1} P &= \nu \Delta u_1 - \textbf{u} \cdot \nabla u_1, \\
\partial_t u_2 + \partial_{x_2} P & = \nu \Delta u_2 - b  - \textbf{u} \cdot \nabla u_2, \\
\partial_{x_1} u_1 + \partial_{x_2} u_2 & = 0,
\end{aligned}
\end{equation}

where 
\begin{align}
N^2=-\dfrac{g \bar \rho'(y)}{\rho_0}
\end{align}
is the Brunt-V\"ais\"al\"a frequency, which is a strictly positive quantity (as $\bar \rho'(y)<0$, for a stable stratification) representing the maximal frequency of oscillations of \emph{internal gravity waves}, propagated by the Boussinesq system, see \cite{Rieutord, DY1999, BDSR19}. These waves are studied in detail in the discussion below, where we work under the \emph{linear stratification assumption}: we assume that the background density profile $\bar \rho(y)$ is an affine function, so that $\de_y \bar \rho (y) = {constant}$. In the ocean and the middle atmosphere, this hypothesis is realistic, see for instance \cite{DY1999, Dauxois-new}, but more general background stratifications are also of great interest, see \cite{Dauxois-new}. When the Brunt-V\"ais\"al\"a frequency $N$ is not constant and is instead a function of $y$, the wave dynamics is governed by a Sturm-Liouville problem, which is described in \cite{Dauxois-new} and studied in \cite{Lannes-Saut}. This more general framework is out of the scope of the present work.

The Boussinesq system \eqref{eq:system} under the inviscid and linear approximation (with $\nu=\kappa=0$) is a system of (internal gravity) waves.
The dispersion relation of these waves can be derived in analogy with acoustic/electromagnetic waves. More precisely, we seek for a solution to system \eqref{eq:system-vel-nodamp} with $\nu=\kappa=0$, in the form of a superposition of plane waves, i.e. $e^{-i \omega t + i \k \cdot \textbf{x}} (u, w, b, p)^T$, with $\textbf{x}=(x_1, x_2)$ the spatial coordinate and $\k=(k_1, k_2) \in \R^2$ the wavenumber. Such a plane wave is a solution of the system under a dispersion relation between the parameters, expressing the time frequency $\omega=\omega (k_1, k_2)$ as a function of the wavenumber $\k=(k_1, k_2)$. For two-dimensional internal gravity waves, it reads
\begin{align}\label{disp}
\omega^2(k_1, k_2)=\frac{k_1^2}{k_1^2+k_2^2}=\sin^2 \theta,
\end{align}
where $\theta$ is the angle between the wave direction (the group velocity) and the horizontal $x_2=0$.

The Boussinesq system \eqref{eq:system} under investigation in this paper is obtained from \eqref{eq:system-vel-nodamp}, after applying a rotation of the $(x_1, x_2)$ coordinates of an angle $\gamma$ (see \eqref{eq:coord1}) and denoting by $(x,y)$ the new spatial coordinates and by $(u, w)$ the rotated velocity field. The smallness of the fluctuations $\tilde \rho, \tilde \uu$ in \eqref{eq:system-vel-nodamp} is represented, in the rotated system \eqref{eq:system}, by the small parameter $\delta>0$, which  measures the small amplitude of waves.

\bcr{
We finally recall that, in the context of incompressible flows, the scalar pressure $p$ can be simply seen as a Lagrange multiplier assuring the divergence-free condition in \eqref{eq:system}. This implies that the scalar pressure $p$ can be always recovered from the vector field $(u, w)^T $, by using the Helmholtz-Hodge decomposition (the Leray projector), see \cite{Chemin2006}. For this reason, in the course of this paper we discard the scalar pressure $p$ from our unknown vector field, which will be denoted by $(u, w, b)^T$.}

\subsection*{The importance of working with beams}
In the \emph{near-critical reflection} of internal gravity waves, an incident wave hits a slope of angle $\gamma$ with respect to the horizontal. This happens for instance in the ocean, where internal gravity waves interact with the bottom topography, see for instance the numerical simulations in \cite{Lamb} and the laboratory experiments from the group of Dauxois \cite{Dauxois-new} and Maas \cite{Maas}. When the fluid is further confined, it leads to the formation of attractors, as found in laboratory experiments in \cite{Maas, Davis} and rigorously proved (with some further assumptions) in \cite{CVSR19, zworski1, zworski2}. 
We consider an incident internal wave hitting the sloping boundary.
Preservation of the angle of the reflection generates a focusing mechanism when the difference between the wavenumber of the incident wave and the slope's inclination is small. This is due to the very peculiar dispersion relation \eqref{disp} (see \cite{long1965} for further details), which assigns the direction of propagation of internal gravity waves rather than fixing the modulus of the wavenumber. 
Even if the size of the incident wave is small, the amplitude of the  wave \textit{reflected} (in the inviscid case where $\nu=\kappa=0$ in \eqref{eq:system}) from the slope is enhanced because of the interaction with the sloping boundary and the energy accumulates along it. This energy focusing plays a role in the quadratic nonlinear terms by radiating a second harmonic and a mean flow which are not confined to the boundary, so that the energy is transferred from the boundary layer to the outer region. This phenomenon is described in \cite{DY1999}. A rigorous mathematical study is provided in \cite{BDSR19}. In particular, \cite{BDSR19} provides a consistent and Lyapunov stable solution in $L^2(\R_+^2)$, which is made of \emph{packets of waves}: those packets are linear superpositions of plane waves, whose wavenumbers are very strongly localized around a given wavenumber $\k_0$ in the frequency space. The counterpart of the strong frequency localization, which makes the packets of waves of \cite{BDSR19} very close (at least in $L^\infty$) to single plane waves (although they have $L^2$ finite energy) is the fact that the packets of waves of \cite{BDSR19} are very spread in all directions of the physical space. In this paper, we adopt a different point of view: we want to work with \emph{beams}, which are wave-packets \emph{spatially localized} along their axis of propagation (a localization corresponding precisely to the upslope and downslope reflection introduced in \cite{DY1999}). Axisymmetric beams are widely investigated in physics, see for instance \cite{Akylas, bou} and the dedicated section in \cite{Dauxois-new}. There are several reasons for being interested in the dynamics of beams. One of them is the fact that their behavior in the context of \emph{instabilities of internal waves} is very different from the case of plane waves. As explained in \cite{Dauxois-new}, the finite width of beams reduces the time of energy absorption from unstable modes and therefore it reduces (and in some cases avoid) instabilities. This is connected to the help of spatial localization in preventing time resonances (and related time growths) that is also a key point  of the \emph{space-time resonance method} and the \emph{null structures} of \cite{Germain, Klainermann, Chr86}. The interest in the dynamics of internal gravity wave beams is one of the motivations of our work. 
In particular, we are able to provide the following explicit characterization: the Leray solution to the near-critical reflection of internal waves in $L^2(\R^2_+)$ is $L^2$ close to a (finite) sum of (spatially localized) beam waves. 
Another motivation to this work has been raised in \cite{BDSR19}, where the approximate solution to the near-critical reflection problem contains a \emph{non-physical term/corrector} that is not a \emph{consistent} solution, so preventing any further improvement of the accuracy of the approximation and/or the extension of its time of validity. Our approach will allow to overcome this problem: we will provide a \emph{fully consistent} and improved approximate (beam wave) solution in terms of \emph{accuracy}. 
This aspect will be further detailed below.

The criticality of the near-critical reflection phenomenon is measured in terms of closeness between the angle of the incident internal wave 
and the angle of the slope. \rob{We therefore define the criticality parameter as follows}
\begin{align}\label{def:critical-relation}
{\zeta:= \sin^2 \theta- \sin^2 \gamma=\omega^2 - \sin^2 \gamma.}
\end{align}
The physical literature on this problem is quite vast, we refer to the first investigation that goes back to 1999 and it is due to Dauxois \& Young \cite{DY1999}, but we point out that this is still an active direction of research in physics, see for instance \cite{kakaota}. In the setting of Dauxois \& Young, see \cite{DY1999}, the considered scaling relation is 
\begin{align}\label{eq:scaling}
|\zeta| \sim \eps^{1/3}. 
\end{align}
Recalling that the dimensionless parameter $0 < \epsilon \ll 1$ represents the size of viscosity/diffusivity ($\nu \sim \kappa \sim \epsilon$) as detailed in \eqref{eq:viscosity-size} (where physical dimensions are encoded in $\nu_0$ and $\kappa_0$, such that $\nu = \nu_0 \epsilon$ and $\kappa = \kappa_0 \epsilon$), it is noteworthy that relation \eqref{eq:scaling} establishes a connection between the criticality parameter $\zeta$ and the size of viscosity/diffusivity. According to \cite[page 278]{DY1999}, ``the main justification for these choices is \emph{a posteriori} - they work in the sense that the dissipative terms are comparable to the others in the final amplitude equation''. To elaborate, the approach in \cite{DY1999} relies on asymptotic expansions at distinct orders followed by \emph{matching conditions} of equations within the inner (near the slope) and outer (far from the boundary) regions. The amplitude of the leading-order term is determined through this matching process by the lower-order expansion. Although our linear analysis in Section \ref{linvis} adopts a different mathematical approach, inspired by \cite{thierry}, and our matching is performed at the boundary of the physical domain (the slope) where suitable boundary conditions are imposed by the system, our leading-order equations for both the inner and outer regions align with those in \cite{DY1999}. Thus, in this work, we adopt the scaling relation \eqref{eq:scaling} for the same reason.

We consider a rotated coordinate system, with rotation angle given by the slope of inclination $\gamma$. The rotated coordinates (see Figure 1 below) will be denoted by $(x,y)$. Setting the notation $\textbf{k}=(k, m)$ for the wavenumber in the rotated coordinate system $(x,y)$, the dispersion relation reads 
\be\label{defomg}
{\omega^2_{k,m}}=\omega^2(k,m)=\frac{(k \cos \gamma-m\sin \gamma)^2}{k^2+m^2}. 
\ee
We also introduce the rotated coordinates $(x^*, y^*)$, where $x^*$ is the direction along the wavenumber $\k$ (which forms an angle $\theta$ with the vertical) and $y^*$ its orthogonal (i.e. the direction given by the group velocity of the beam wave of wavenumber $\k$). We localize the beam wave in the physical space, along the direction $x^*$ of its wavevector (or wavenumber) $\k$, which is orthogonal to its \emph{group velocity}. The spatial localization is expressed in terms of high oscillations driven by a small parameter $\sigma$.  
Notice that at this stage we do not assume any relation between $\eps$ and $\sigma$.
\begin{figure}[h!]
\includegraphics[width=0.7\textwidth]{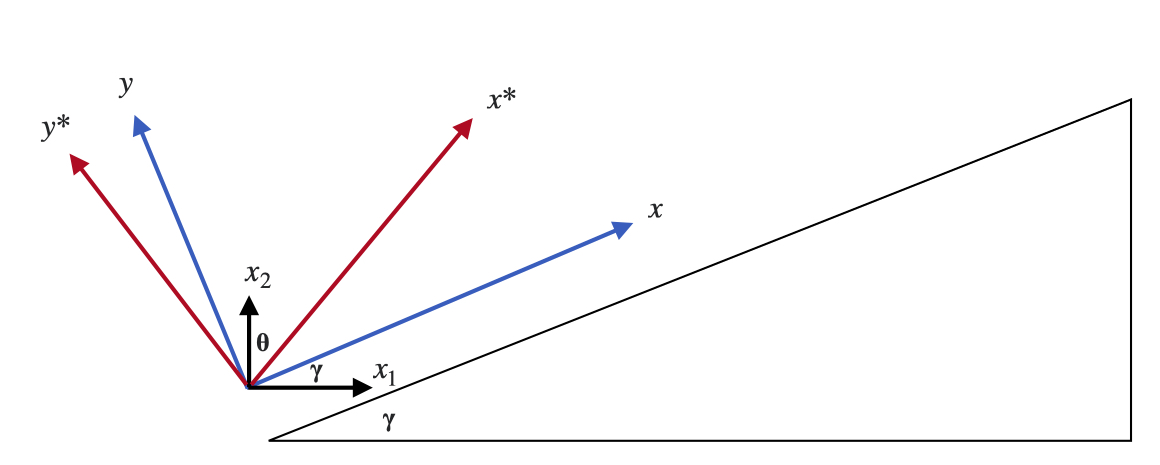}\caption{Coordinate systems}\label{coord}
\end{figure}

\rob{We} consider the following form of a quasi-axisymmetric incident beam wave
\begin{align}\label{eq:def-inc-new}
\W_{\rm{inc}}^0 & =M \int_{\mathbb{R}^2} X_{k,m}  \widehat{\psi}_{\eps, \sigma} (  k_1^*, \theta)\, e^{-i\omega t + i k x + m y}   \, dk \, dm,
\end{align}
where $\widehat{\psi}_{\eps, \sigma} ( k_1^*, \theta)$ is a compactly supported $C_0^\infty$ function that will be introduced more precisely later on, $M$ is a (strictly positive) normalizing constant which will be determined in the following, and $X_{k,m}$ is the eigenvector of the form 
\begin{align}\label{eq:eigenvector}
X_{k,m}=\begin{pmatrix}
1\\
\frac{-k}{m}\\
\frac{i(k \cos \gamma-m\sin \gamma)}{m \omega_{k,m}} \\
\frac 1 k [\omega + \sin \gamma \frac{(k \cos \gamma - m \sin \gamma)}{m \omega_{k,m}}]
\end{pmatrix}.
\end{align}
The above expression of (almost) axisymmetric beam wave has been inspired by several works in theoretical physics on internal gravity wave beams, in particular the group of Akylas, see for instance \cite{Akylas, Akylas1}. As explained in \cite{Dauxois-new}, laboratory experiments on internal wave beams are realized by means of a beam wave generator, that is for instance a cylinder oscillating at a fixed frequency in a uniform stratification.

As we report from \cite{Dauxois-new},  ``Each wave beam is a combination of plane waves over
a continuous range of wavenumbers, resulting in the energy being present only over a finite width
(along the common wave vector direction) in space. The cylinder diameter imposes a length-scale
on the finite width of the wave beams.'' Here, this length-scale is represented by the parameter $\sigma$ in \eqref{eq:def-inc-new}. Notice in particular that the superimposed plane waves inside the integral \eqref{eq:def-inc-new} have the same (fixed) direction of propagation, so that the time-frequency $\omega=\omega_0$ is also fixed, while the modulus of their wavenumber takes values in an interval that is bounded by $\sigma^{-1}$. In fact, we see in [(2.9), \cite{Dauxois-new}] that, in the physics literature, an axisymmetric beam is defined as
\begin{align*}
\psi(t, x, y, \theta)=  e^{-i \omega_0 t}\int_0^\infty \Psi ({\kk}) e^{ i kx+imy} \, d{\kk} + c.c.,
\end{align*} 
for some smooth and compactly supported function $ \Psi $. In the laboratory studies on the reflection of internal waves from a slope (and internal wave attractors) by the group of Maas \& Dauxois, the angle of inclination of the slope is slightly modified by changing the trapezoidal geometry of the domain.

The mathematical approach that we adopt here provides a rigorous description of \rob{this situation}.
Our (incident) beam wave \eqref{eq:def-inc-new} is a superposition, spread enough to provide localization in physical space, of plane waves whose direction (angle of the dispersion relation \eqref{disp}) is \emph{almost} fixed, so providing a \emph{quasi-axisymmetric beam wave}. The angle $\theta$ in \eqref{eq:def-inc-new} (and in Definition \ref{def:inc-beam} later on) models the \emph{near}-criticality of the problem. 

\subsection*{Comparison with the existing literature and presentation of our results} 
The Boussinesq equations attracted the interest of the mathematical community
thanks to their wide range of applications and to some common features with 3D incompressible fluids:
the inviscid 2D Boussinesq equations are equivalent to the incompressible axi-symmetric 3D Euler equations
\cite{Majda}; the global well-posedness of the 2D inviscid Boussinesq system is still largely open (see for instance \cite{Elgindi}), and a very active research direction focuses on the minimal amount of viscous/diffusive dissipation (see \cite{cao, Chae})  and/or damping 
(see \cite{Castro}) that guarantees long-time well-posedness of the system.
The near-critical reflection of internal waves in two dimensions is  well understood within a certain time-scale depending on the physical parameters of the system: after a theoretical
investigation due to Dauxois \& Young \cite{DY1999}, the mathematical theory of this phenomenon was established
by the first author, A.-L- Dalibard and L. Saint-Raymond in \cite{BDSR19} (see also \cite{BO21} for the linear analysis with different sizes of viscosity and diffusivity). 
The approximate solution of \cite{BDSR19} contains a non-physical corrector to replace a \emph{mean flow} that is \emph{degenerate} (see [page 219, \cite{BDSR19}), in the sense that its very slow decay does not assure in general the boundedness of its $L^2$ norm (of its energy) and therefore it cannot be implemented in the construction of the solution. The idea adopted in \cite{BDSR19} to solve this issue is to use a non-physical corrector to assure stability, but this rules out any possibility to improve the accuracy/time-scale of validity as \emph{the non-physical corrector is not a consistent approximation to system \eqref{eq:system}}.
In this paper, we overcome this problem by constructing a spatially localized beam wave approximate solution to the near-critical reflection of internal waves. In particular, our approximate solution is constituted by all  ``physical'' terms, i.e. consistent approximations to \eqref{eq:system}, 
and all the spatially localized wave-beam terms of our solution have finite $L^2$ norm.
Thus, in this paper we are able to construct a \emph{fully consistent} and $L^2$ stable approximate solution to the near-critical reflection of internal waves in the half-plane (Theorem \ref{thm:main1}). We also show that the \emph{accuracy} of our approximate solution can be further improved  (Theorem \ref{thm:main2}), by exploiting the null structure of the convection term for incompressible flows which was already pointed out in \cite{DY1999}. {Finally, a byproduct of our result is the characterization of the (unique) Leray solution to this problem within a certain (nonlinear) time-scale: we prove that the Leray solution, emanating from a suitable set of initial conditions concentrated approximately on rays, maintains the same localization  (Theorem \ref{thm:main3}).}

To conclude, the \emph{beam wave} approach is crucial in this work and it marks a strong difference with respect to \cite{BDSR19}: besides the physical relevance of beam waves (\cite{Akylas, Akylas1}), it allows to \emph{characterize the Leray solution in the physical space}, to obtain \emph{consistency} and to improve the \emph{accuracy} of the approximate solution.

\subsection*{Notation and conventions}
We use the following notation and conventions. 
\begin{itemize}
\item The coordinates $(x_1, x_2)$ denote the usual two-dimensional reference system. We will also use the following rotated coordinate systems
\begin{align}
x&=x_1 \cos \gamma + x_2 \sin \gamma; \notag\\
y&=- x_1 \sin \gamma + x_2 \cos \gamma. \label{eq:coord1} \\\notag\\
x^*&=x_1 \sin \theta + x_2 \cos \theta; \notag\\
y^*&=-x_1 \cos \theta+x_2 \sin \theta, \label{eq:coord2}
\end{align}
see Figure \ref{coord}.
Accordingly, we introduce the new variables $(u, w)$, representing a rotation of angle $\gamma$ of the original unknowns $(u_1, u_2)$, i.e.
\begin{align}
u&=u_1 \cos \gamma + u_2 \sin \gamma; \notag\\
w&=- u_1 \sin \gamma + u_2 \cos \gamma. \label{eq:variables-rotation}
\end{align}
\item Throughout this paper, we will use the convention $a=O(b), \, a \in \R, b \in \R,$ if there exists $C>0$ such that $a \le C b$. 
\item We use the notation $a \approx b, a,b \in \R$ if there exist uniform constants $c>0, C>0$ such that $a\ge c b, \, a\le C b$.
\item For any $f_\eps=f(\eps), g_\eps=g(\eps)=\eps^a\bar g $ with $a \in \R, \bar g \in \R$, we use the notation $f_\eps \sim g_\eps$ if $g_\eps$ is the leading order term of the expansion of $f_\eps$ in terms of $\eps$, i.e. $\lim\limits_{ \eps \rightarrow 0} \eps^{-a} f_\eps = \bar g$ (this corresponds to the usual definition of equivalence).
\item Given a function $f(x)$, we denote by $\widehat{f}(\xi)$ its Fourier transform, namely
\begin{align*}
\widehat{f}(\xi) = \int e^{-i\xi \cdot x} f(x)\, dx.
\end{align*}
\end{itemize}

\section{Main results}\label{sec:results}
A very general form of \emph{wave beam} is given by \eqref{eq:def-inc-new}. The assumptions on $\widehat{\psi}_{\eps, \sigma}(\kk, \theta)$ will be introduced later on. 
Note that since $\k=(k,m)$ is the rotation of angle $\gamma$ in \eqref{eq:coord1} of $(k_1, k_2)$, and $(k_1^*, k_2^*)$ is the rotation of angle $\theta$ (in Figure \ref{coord}) in \eqref{eq:coord2} of $(k_1, k_2)$, then 
\begin{align*}
\exp(ikx+imy)&= \exp( i |\textbf{k}| (x \sin (\theta+\gamma) + y \cos (\theta+\gamma)))=\exp( i{\kk}x^*),
\end{align*}
and 
\begin{align*}
k_1^*&=k_1 \sin \theta + k_2 \cos \theta= |\textbf{k}|.
\end{align*}
Then, in \eqref{eq:def-inc-new}, the function $\widehat{\psi}_{\eps, \sigma} (k_1^*, \theta)=\widehat{\Psi}_{\eps, \sigma} (\kk, \theta)$, where $\theta$ is the direction of the wavenumber $\k$. 
In the \emph{physical space}, our wave beam will be localized in the $x^*$ direction, which is  orthogonal to the wavenumber $\k$ {(a more precise description of this localization will be provided in Lemma \ref{lem:loc}).}\\

We provide below the definition of wave beam (hereafter \bw) and boundary layer wave beam (resp. \blbw). 


\begin{definition}\label{def:inc-beam}[Beams, boundary layer and incident wave ansatz]\ {In the sequel, recall the notation $\k=(k,m)$.} 
\\
\textit{For $\eta>0$, we define}
\begin{enumerate}[label=(\bf\Roman*)]
\item a {beam }  as
any function $v_{\rm{beam}}^t$ in $L^2(\R\times\R^+)$ of the form
\begin{align}\label{defbeam}
v_{\rm{beam}}^t\bcb{(x,y)} & = {\cha(y)}  \int_{\et}^\infty \int_0^{2\pi}  \widehat{\Psi}({\kk}, \theta) 
e^{- i\Omega_{k,m} t +  i k (x-x_0) + i m y}  \, {\kk} \; d{\kk} \, d\theta,
\end{align}
\item a boundary layer beam as
any function $v_{\rm{BL}}^t$ in $L^2(\R\times\R^+)$ of the form
{\begin{align}\label{defBL}
v_{\rm{BL}}^t\bcb{(x,y)} & =  {\cha(y)}\int_{\et}^\infty \int_0^{2\pi} \widehat{\Psi}({\kk}, \theta)
e^{- i\Omega_{k,m} t +  i k (x-x_0) -{\L (k) }y}  \, {\kk} \; d{\kk} \, d\theta,
\end{align}}
where
\begin{align}
&\bullet \quad \widehat{\Psi}(\kk, \theta) \, \text{is a \,} C^\infty \text{compactly supported function in \,} (\kk, \theta); \notag\\
&\bullet \quad  {\text{the time frequency\,} \Omega_{k,m}\text{\, is a function of \,} (k,m)=\k;} \notag\\
&\bullet \quad \text{Re}(\lambda(k)) >0 \quad \text{for all} \quad k, \; |\k| \ge \eta; 
\notag
\end{align}
\item a beam wave (\bw)\ as a three dimensional vector whose components are beams;\\
\item a boundary layer beam wave (\blbw)\ as a three dimensional vector whose components are boundary layer beams.\\
\\
\end{enumerate}
\noindent \textit{The two following objects are crucial in our analysis}:\\

\begin{enumerate}[label=(\bf\Roman*)]
  \setcounter{enumi}{4}
\item the incident \bw as the following \bw solving the linear part of system \eqref{eq:system} 
\begin{align}\label{equivalent}
\W_{\rm{inc}}^0:=\frac{\sigma}{\eps^{1/6}} \int_0^{2\pi} \int_\et^\infty \widehat{\Psi}_{\eps, \sigma}(\kk, \theta) X_{k,m} e^{- i\omega_{k,m} t +  i k (x-x_0) + i m y}  \, {\kk} \; d{\kk} \, d\theta,
\end{align}
where 
\begin{enumerate}
\item for any $0 < \eps, \sigma \ll1$ in the regime of Assumption \ref{ass}, we define 
 \begin{align}\label{def:psi}
\widehat{\Psi}_{\eps, \sigma}(\kk, \theta):&= \G(\sigma \kk) \left(  \G' \left(\frac{\sin \theta-\sin \gamma}{\eps^{1/3}}\right)\G' \left(\frac{\cos \theta-\cos \gamma}{\eps^{1/3}}\right)+\G' \left(\frac{\sin \theta+\sin \gamma}{\eps^{1/3}}\right)\G' \left(\frac{\cos \theta+\cos \gamma}{\eps^{1/3}}\right)\right),\\
& \text{where} \, \G=\G_{\eps, \sigma}, \G'=\G'_{\eps, \sigma} \in C_0^\infty \; \text{with compact support}, \; \text{uniformly in } \eps, \sigma; \notag
\end{align}
\item the eigenvector $X_{k,m}$ is given by \eqref{eq:eigenvector}; 
\item the time frequency $\omega=\omega_{k,m}$ is given by the dispersion relation \eqref{defomg}, i.e. 
\begin{align}
\omega=\omega_\pm=\pm\frac{k \cos \gamma-m \sin \gamma}{\sqrt{k^2+m^2}}, \label{eq:omega} 
\end{align}
and, for the beam to be \emph{incident}, since $\gamma>0$, we choose the sign of the time frequency $\omega_\pm (k,m)$ in \eqref{eq:omega} in such a way that 
$$\nabla_{k,m} \omega_\pm (k,m) \cdot (0,1)^{\rm{T}} < 0;$$
\end{enumerate}
\vspace{3mm}
\item a \emph{linear} \blbw of order $(\alpha, \beta, p, q)$ as a \blbw in (II) solving the linear part of system \eqref{eq:system}; it has the general form
\begin{align}\label{def:BL-general}
\W_{\rm{BL}, \eps^\alpha}^\L:=\frac{\sigma}{\eps^{1/6}} \int_0^{2\pi} \int_\et^\infty \widehat{\Psi}_{\eps, \sigma}^{\nn, \ta}(\kk, \theta) X_{k,\L} e^{- i\omega_{k,m} t +  i k (x-x_0)- {\L (k)}y} \, {\kk} \; d{\kk} \, d\theta, \end{align}
where 
\begin{enumerate}
\item $\alpha>0,$
\item for any $0 < \eps, \sigma \ll1$ in the regime of Assumption \ref{ass}, and for any $p, q \in \R$, 
 \begin{align}\label{def:psi-pq}
\widehat{\Psi}_{\eps,\sigma}^{\nn, \ta}({\kk}, \theta):&= a_{\nn, \ta}(\eps, \kk)\widehat{\Psi}_{\eps,\sigma}({\kk}, \theta) \quad  \text{where} \;  a_{\nn, \ta}(\eps, \kk)=O(\eps^\nn \kk^\ta) \; \text{and} \; \widehat{\Psi}_{\eps, \sigma}\; \text{is defined in \eqref{def:psi}};
\end{align}
\item there exists a function $\ell(\theta)  \in C^\infty (\text{supp} \widehat{\Psi}^{p,q}_{\eps, \sigma}, \mathbb{C});\ \Re{(\ell)}>0$ 
 {such that}   $\lim\limits_{\eps \rightarrow 0} \eps^{\alpha} \L (k)= \ell(\theta) \kk^\beta, \; \beta \in \R$;
\item the eigenvector $X_{k, \L}$ is given below in \eqref{eq:eigen-BLs};
\item the time frequency $\omega=\omega_{k,m}$ is in \eqref{eq:omega}, with the same convention.
\end{enumerate}
\end{enumerate}
\vspace{5mm}
\textit{The following objects will appear in the course of our proofs}:\\
\begin{enumerate}[label=(\bf\Roman*)]
  \setcounter{enumi}{6}
\item  {a {degenerate boundary layer beam wave} of order $(\alpha, \beta, \nn, \ta)$, any (family of) function(s) $\W_{\rm{BL}, \eps^\alpha}^\L$ given by \eqref{def:BL-general}, with $\alpha\le 0$;}\\
\item {a  {mean flow beam} of order $(\nn, \ta)$, any (family of) function(s)  \bw $\W_{\rm{MF}}$ as in (III), where the function $\widehat{\Psi}(\kk, \theta)$ localizes near $\Omega_{k,m} \sim 0$ as $\eps \rightarrow 0$};\\
\item {a {second harmonic beam wave} of order $(\nn, \ta)$, any (family of) function(s)  $\W_{\rm{II}}$ as in (III), where the function  $\widehat{\Psi}(\kk, \theta)$ localizes near $\Omega_{k,m} \sim \pm 2\omega_{k,m}$ as $\eps \rightarrow0$, and $\omega_{k,m}$ being given by \eqref{eq:omega};}
\item {a {mean flow} \blbw of order $(\alpha, \beta, \nn, \ta)$, any (family of) function(s)  $\W_{\rm{BL; MF}}$ as in (IV), where the function $\widehat{\Psi}(\kk, \theta)$ localizes near  $\Omega_{k,m} \sim 0$ as $\eps \rightarrow0$;}\\
\item {a {second harmonic} \blbw of order $(\alpha, \beta, \nn, \ta)$, any (family of) function(s) $\W_{\rm{BL; II}}$ as in (IV), where the function $\widehat{\Psi}(\kk, \theta)$ localizes near  $\Omega_{k,m} \sim \pm 2\omega_{k,m}$ as $\eps\rightarrow 0$, and $\omega_{k,m}$ being given again by \eqref{eq:omega}.}
\end{enumerate}

\end{definition}

Some remarks are in order.
\begin{remark}
We point out that, when evaluated at $y=0$, beams and boundary layers are expressed by the same formula in Definition \ref{def:inc-beam}.
\end{remark}
\begin{remark}\label{rmk:equality}
Notice that, for any fixed $\omega \in \R$, there exist four frequency vectors $\k \in \R^2$ fulfilling the relation $\omega^2=\sin^2 \theta=(\sin^2\theta)( \k)$. However, the amplitude $\widehat{\Psi}_{\eps, \sigma}$ in \eqref{def:psi} selects the two (out of four) \bw or \blbw with collinear frequency vectors. We also remark that the sign of $\omega=\omega_\pm$ in Definition \ref{def:inc-beam} point (e) is chosen in such a way that 
$$\nabla_{k,m} \omega_\pm \cdot (0,1)^T=\mp \frac{k(m \cos \gamma + k \sin \gamma)}{(k^2+m^2)^{3/2}} <0.$$
As the support of $\widehat{\Psi}_{\eps, \sigma}$ forces $\theta=\gamma+O(\eps^\frac 13)$ or $\theta=\gamma+\pi+O(\eps^\frac 13)$, we can check that in both cases the function
$$\omega_+=\frac{k\cos \gamma-m\sin \gamma}{\sqrt{k^2+m^2}}$$ satisfies the \emph{incidence condition}   $\nabla_{(k,m)} \omega_+ \cdot (0,1)^T<0$. In other words, it automatically follows from the incidence condition that \emph{$k$ and $\omega$ always have the same sign}. In fact,  $\omega=\sin \theta = \sin \gamma + O(\eps^\frac 13)>0$ (in polar coordinates) if $k>0$ and $\omega = \sin \theta = \sin (\gamma+\pi)+O(\eps^\frac 13)<0$ if $k<0$.
This remark will be important in the linear boundary layer analysis of the next section, where in some cases the sign of $k/\omega$ determines the sign of $\rm{Re}(\L(k))$ of \blbw in Definition \ref{def:inc-beam}.
\end{remark}

Let us state immediately the following lemma that estimates the norms of $v_{\rm{beam}}$ and $v_{\rm{BL}}$ and which is proven in Appendix \ref{prooflemma} below.
\begin{lemma}\label{estbbl}
Let $v_{\rm{beam}}^t$ be any  beam (\bw)  of order $(\nn,\ta)$ 
and $v_{\rm{BL}}^t$ be any boundary layer beam (\blbw) of order $(\alpha,\beta,\nn,\ta)$. 
Then, uniformly in $t\in\R_+$,
\begin{align}
\|v_{\rm{beam}}^t\|_{L^2(\R\times\R_+)}&=O(\eps^\nn\sigma^{-\ta})\nonumber;\\
\|v_{\rm{BL}}^t\|_{L^2(\R\times\R_+)}&=O(\eps^{\frac16+\nn+\frac\alpha2}\sigma^{-\ta-\frac12});\nonumber\\
\|v_{\rm{beam}}^t\|_{L^\infty(\R\times\R_+)}&=O(\eps^{\nn+\frac 16} \sigma^{-\ta-1})\nonumber;\\
\|v_{\rm{BL}}^t\|_{L^\infty(\R\times\R_+)}&=O(\eps^{\nn+\frac 16} \sigma^{-\ta-1}). \label{eq:estimatenorm}
\end{align}

Moreover, the following hold.
\begin{enumerate}[label=(\roman*)]
\item\label{xyb} $\partial_x{v}_{\rm{beam}}^t,\partial_y{v}_{\rm{beam}}^t$ are \bw of order $(\nn,\ta+1)$.
\item\label{xybl} $\partial_x{v}_{\rm{BL}}^t$ (resp. $\partial_y{v}_{\rm{BL}}^t$) is a \blbw of order $(\alpha,\beta,\nn,\ta+1)$ (resp. $(\alpha,\beta,\nn-\alpha,\ta+\beta)$).
\item Given $v_{\rm{BL}}^t, v_{\rm{BL}}'^t$ \blbw of order $(\alpha, \beta, \nn, \ta)$ and $(\alpha', \beta', \nn', \ta')$ respectively, and $v_{\rm{beam}}^t, v_{\rm{beam}}'^t$ \bw of order $(\nn, \ta)$ and $(\nn', \ta')$, one has the following estimates:
\begin{align}\label{eq:normprod}
\|v_{\rm{beam}}^t \times v_{\rm{beam}}'^t\|_{L^2(\R \times \R_+)} &= O(\eps^{\nn+\nn'+\frac 16}\sigma^{-1 - \ta- \ta'}) \notag\\
\|v_{\rm{BL}}^t \times v_{\rm{BL}}'^t\|_{L^2(\R \times \R_+)} & = O(\eps^{\nn+\nn'+\frac{\max\{\alpha, \alpha'\}}{2} + \frac 13} \sigma^{-\frac 32 - \ta - \ta'}), \\
\|v_{\rm{BL}}^t \times v_{\rm{beam}}'^t\|_{L^2(\R \times \R_+)} &= O(\eps^{\nn+\nn'+\frac \alpha 2 + \frac 13}\sigma^{-\frac 32 - \ta- \ta'}).\notag
\end{align}
\end{enumerate}
\end{lemma}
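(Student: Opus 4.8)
The plan is to write every (boundary-layer) beam as a Fourier integral in the Cartesian wavenumber $(k,m)$ and to read off the exponents in \eqref{eq:estimatenorm}--\eqref{eq:normprod} from Plancherel's theorem together with the geometry of $\operatorname{supp}\widehat\Psi_{\eps,\sigma}$. Passing from $(\kk,\theta)$ to $(k,m)=\kk(\sin(\theta+\gamma),\cos(\theta+\gamma))$ gives $\kk\,d\kk\,d\theta=dk\,dm$ and $kx+my=\kk x^*$, so a beam of order $(\nn,\ta)$ reads $\frac{\sigma}{\eps^{1/6}}\int_{\R^2}A(k,m)\,e^{-i\Omega t+ik(x-x_0)+imy}\,dk\,dm$ with symbol $A=a_{\nn,\ta}\,\widehat\Psi_{\eps,\sigma}\,X$, $|a_{\nn,\ta}|=O(\eps^{\nn}\kk^{\ta})$, and similarly for a \blbw with $e^{imy}$ replaced by $e^{-\L(k)y}$ and $X=X_{k,\L}$. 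By \eqref{def:psi} the support of $\widehat\Psi_{\eps,\sigma}$ sits, in $\theta$, in two arcs of width $O(\eps^{1/3})$ about $\gamma$ and $\gamma+\pi$, and, in $\kk$, in an interval of length $O(\sigma^{-1})$ centred at $\kk\sim\sigma^{-1}$; there $\kk^{\ta}\sim\sigma^{-\ta}$. The eigenvectors \eqref{eq:eigenvector} and \eqref{eq:eigen-BLs} are $O(1)$ on this set, a fact I would record first.

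For the $L^2$ estimates I treat beams and boundary-layer beams separately. A beam equals $\cha(y)\,\mathcal F^{-1}A$ (inverse Fourier transform in both variables); the phase $e^{-i\Omega t}$ is unimodular and restriction to $\{y\ge0\}$ only lowers the norm, so $\|v_{\rm{beam}}^t\|_{L^2(\R\times\R_+)}\le C\frac{\sigma}{\eps^{1/6}}\|A\|_{L^2(\R^2)}$ uniformly in $t$, and evaluating $\|A\|_{L^2(\R^2)}^2$ over the support (the $\theta$-arc contributes $\eps^{1/3}$, the $\kk$-integral $\sigma^{-2\ta-2}$) yields $O(\eps^{\nn}\sigma^{-\ta})$. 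For a \blbw the $y$-profile is $e^{-\L(k)y}$, so I apply Plancherel in $x$ only, setting $G(k)=\int_\R A(k,m)\,dm$, and integrate the exponential explicitly:
\[
\|v_{\rm{BL}}^t\|_{L^2(\R\times\R_+)}^2=\pi\,\frac{\sigma^2}{\eps^{1/3}}\int_\R\frac{|G(k)|^2}{\Re(\L(k))}\,dk,\qquad\int_0^\infty e^{-2\Re(\L(k))\,y}\,dy=\frac1{2\Re(\L(k))}.
\]
Since $X_{k,\L}$ is independent of $m$, one has $|G(k)|\le|X_{k,\L}|\,w(k)\,\sup|a_{\nn,\ta}\widehat\Psi_{\eps,\sigma}|$ with $m$-support width $w(k)=O(\sigma^{-1}\eps^{1/3})$, and on the support $\Re(\L(k))\gtrsim\eps^{-\alpha}$ by point (c) (as $\kk\gtrsim1$, $\beta\ge0$). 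Carrying the $k$-integral over its $O(\sigma^{-1})$ support then gives $O(\eps^{1/6+\nn+\alpha/2}\sigma^{-\ta-1/2})$, again uniformly in $t$ since the unimodular phase is discarded.

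The other bounds are softer. For $L^\infty$ I use the triangle inequality on the Fourier integral at $y=0$ (where $|e^{imy}|=|e^{-\L(k) y}|=1$), namely $\|v\|_{L^\infty}\le\frac{\sigma}{\eps^{1/6}}\int|A|\,\kk\,d\kk\,d\theta=O(\eps^{\nn+1/6}\sigma^{-\ta-1})$, identically for beams and \blbw. The derivative claims \ref{xyb}--\ref{xybl} follow by differentiating under the integral: $\partial_x,\partial_y$ insert $ik,im\sim\kk$, raising the order to $(\nn,\ta+1)$, while $\partial_y$ on a \blbw inserts $-\L(k)\sim\eps^{-\alpha}\kk^{\beta}$, shifting the order to $(\nn-\alpha,\ta+\beta)$; the boundary contribution $\delta(y)$ from $\partial_y\cha(y)$ is annihilated by the boundary conditions \eqref{eq:cond-BL}. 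The product estimates \eqref{eq:normprod} are then immediate from $\|fg\|_{L^2}\le\|f\|_{L^\infty}\|g\|_{L^2}$, placing the $L^\infty$ factor so as to optimise the exponent; in the \blbw/\blbw case one puts the $L^2$ norm on the factor with the larger $\alpha$, so that the $\eps$-exponent is $\max\{\alpha,\alpha'\}/2$ (the smaller of the two candidate bounds, since $\eps<1$).

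The genuinely delicate point is the boundary-layer $L^2$ bound: unlike the beam case one cannot use full two-dimensional Plancherel, and the $y$-integration of $e^{-2\Re(\L(k))y}$ must be paired with honest control of the anisotropic support inside the marginal $G(k)=\int A\,dm$ — the interplay between the angular width $\eps^{1/3}$, the radial scale $\sigma^{-1}$, and the lower bound $\Re(\L(k))\gtrsim\eps^{-\alpha}$. Everything else reduces to Plancherel, the $L^1$--$L^\infty$ inequality for Fourier integrals, H\"older, and the uniform boundedness of the eigenvectors on the support, which I would verify directly from \eqref{eq:eigenvector} and \eqref{eq:eigen-BLs}.
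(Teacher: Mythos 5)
Your proposal is correct and follows essentially the same route as the paper's Appendix~\ref{prooflemma}: Plancherel plus support counting in $(\kk,\theta)$ for the beam $L^2$ bound, Plancherel in $x$ only combined with the explicit $y$-integration of $e^{-2\Re(\L)y}$ and the lower bound $\Re(\L)\gtrsim \eps^{-\alpha}$ for the boundary-layer $L^2$ bound, the $L^1$--$L^\infty$ inequality for the sup norms, differentiation under the integral for (i)--(ii), and H\"older with the optimal placement of the $L^\infty$ factor for \eqref{eq:normprod}. The only cosmetic differences are that the paper organizes the boundary-layer computation via Minkowski in $\theta$ and a one-dimensional Plancherel in $\kk$ rather than via the marginal $G(k)=\int A\,dm$, and that your appeal to the boundary conditions \eqref{eq:cond-BL} to dispose of the $\delta(y)$ from $\partial_y\cha(y)$ is unnecessary (and not available for a generic beam) --- the derivative is simply taken on the open half-plane, as in the paper.
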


Lemma \ref{estbbl} (together with other features that will appear in the course of our proofs) leads to the following general hypothesis on the parameters present in our analysis.
\begin{assumption}[Admissible range of the involved parameters]
 \label{ass}
 First, we assume that $\gamma \in (0, \frac \pi 2)$.
 The scaling assumptions in terms of the dimensionless parameter $\eps$ are listed below.
 

\begin{itemize}
\item
{$\nu =  \eps \nu_0, \quad  \kappa = \eps \kappa_0,$} \hfill\emph{{[scaling of  viscosity/diffusion coefficients]}}
\item {$\delta =O( \sigma^{\frac 23} \eps^{\frac 12}) $ }\hfill{\emph{
[strength of  nonlinearity w.r.t. beam spatial concentration and critical parameter
]}}
\item $\sigma>\eps^{\mu} \, \text{for any}\;  0<\mu < \frac 18. $ \hfill{\emph{ [\bcr{order of viscosity/diffusion  w.r.t.  spreading in frequencies of the beam
}]}}
\end{itemize} 
First, the case $\gamma=\frac \pi 2$ corresponds to the very degenerate case of the vertical propagation of internal gravity waves, which is excluded from our analysis (and for $\gamma=0$ there is no slope).

The first scaling condition is simply the smallness of the viscosity $\nu$ and diffusivity $\kappa$ coefficients. The third one is dictated by the range of validity of the asymptotic expansions of the roots of the boundary layers in the next section, which is chosen to be in accordance with the studies of \cite{DY1999}. If they are violated, different asymptotics for the decay of the boundary layers (see the next section) should be expected. This is out of the scope of the present investigation. The condition on the weakly nonlinear parameter $\delta$ is the minimal assumption (i.e. the maximal order of $\delta$) which allows to prove that the approximate solution to system \eqref{eq:system} constructed in this work is \emph{stable} in $L^2(\R^2_+)$ (i.e. close to the Leray solutions) for a (large) logarithmic time-scale given by \eqref{eq:time}. The smaller is $\delta$, the larger is the stability time-scale. 
\end{assumption}

\begin{remark}
It is interesting to compare Definition \ref{def:inc-beam} ((V) and (VI)) with the setting of \cite{BDSR19}. In \cite{BDSR19} (see (2-14)-(2-16), pp. 223-224 in \cite{BDSR19}), the wavenumbers of the superposed waves $\k=(k, m)$ is an $\eps^{\frac 13}$ correction of the \emph{critical wavenumber} $\k_0=(k_0, m_0)$, so that its modulus (not only the angle) is almost fixed, i.e. ${\kk} \sim |\k_0|$. In this work, instead, the modulus of the wavenumber is upper bounded by $\sigma^{-1}\le \eps^{-\mu}$ (where $\sigma \ll 1$, so that $\sigma \gg 1$ as $\mu>0$), while its inclination $\theta$ is an $\eps^{\frac 13}$ correction of the \emph{critical angle} $\gamma$. Thus, in this work we deal with a more general framework where the spatial frequency of oscillations of the superimposed waves can vary largely. The choice of (almost) fixing the angle/direction of the wavenumber is natural in the context of internal gravity waves, whose \emph{anisotropic} dispersion relation is completely determined by the inclination of the wavenumber, see \cite{BDSR19, DY1999, CVSR19}.
\end{remark}

\bcr{\begin{remark}[On the existence of Leray solutions]\label{rmk:existence}
It follows from the standard theory on the two-dimensional Navier-Stokes equations in general domains (see \cite{Chemin2006}), which can be readily extended to the two-dimensional Boussinesq system \eqref{eq:system} in the half-plane, that there exists a unique global-in-time weak solution $\W(t)$ to system \eqref{eq:system} in $C(\mathbb{R}^+; \mathbb{V}_\sigma')  \cap L^\infty(\mathbb{R}^+; L^2(\mathbb{R}^2_+)) \cap L^2_{\rm{loc}}(\mathbb{R}^+; \mathbb{V}_\sigma)$,  where $\mathbb{V}_\sigma:=\{ (u, w, b) \in H^1(\mathbb{R}^2_+); \; \de_x u+\de_y w=0\}$ and $ \mathbb{V}'_\sigma \rm{\  is\ the\ dual \ of \;} \mathbb{V}_\sigma$,  with initial data $\W_0:=\W(0) \in L^2(\R^2_+)$. The proof of that, which is an adaptation of the argument in  \cite{Chemin2006}, can be found in the Appendix of \cite{BDSR19}.
\end{remark}}

The main aim of this work is to prove the following three theorems.
\begin{theorem}[Consistency \& stability]\label{thm:main1}
\bcb{Let  $\W_{\rm{inc}}^{0}$ be \rob{any} incident \bw (beam wave) \rob{defined by} \eqref{equivalent}-\eqref{eq:omega} for some $\G, \G'$.}

Assume the validity, for any $0 < \eps \ll1$, of the \bcb{Assumptions \ref{ass}} on the scaling parameters of the Boussinesq system \eqref{eq:system}.

Then, there exists an approximate solution $\W^{\rm{app}}$ to system \eqref{eq:system}
\bcb{of the form}
$$\W^{\rm{app}}=(u^{\rm{app}} , w^{\rm{app}}, b^{\rm{app}})^T:=\W_{\rm{inc}}^{0}+\W_{\rm{BL}}+\W_{\rm{II}} + \W_{\rm{MF}},$$
where  $\W_{\rm{BL}}$ is a \blbw (boundary layer beam wave), $\W_{\rm{II}}$ is a second harmonic \bw (double time frequency of the incident beam wave), $\W_{\rm{MF}}$ is a mean flow \bw (almost vanishing time frequency).
\vskip 0.3cm
More precisely, the following results hold true.\\
$(i)$  $\W^{\rm{app}}$ is a \emph{consistent} approximation to system \eqref{eq:system}, in the sense that it satisfies the following system\\
\begin{align*}
\de_t u^{\rm{app}} - b^{\rm{app}} \sin \gamma + \de_x p^{\rm{app}} -  \nu  \Delta u^{\rm{app}}  &= -\delta (u^{\rm{app}} \de_x + w^{\rm{app}}  \de_y) u^{\rm{app}}+R_u^{\rm{app}},\notag\\
\de_t w^{\rm{app}}  - b^{\rm{app}}  \cos \gamma + \de_y p^{\rm{app}}  -   \nu \Delta w^{\rm{app}}  & =  -\delta (u^{\rm{app}} \de_x + w^{\rm{app}}  \de_y) w^{\rm{app}}+R_w^{\rm{app}} ,\notag\\
\de_t b^{\rm{app}}  + u^{\rm{app}} \sin \gamma  + w^{\rm{app}}  \cos \gamma -  \kappa  \Delta b^{\rm{app}}  & =  -\delta (u^{\rm{app}} \de_x + w^{\rm{app}}  \de_y)b^{\rm{app}} + R_b^{\rm{app}} ,\notag\\
\de_x u^{\rm{app}}  + \de_y w^{\rm{app}} &=0,
\end{align*}
with a remainder 
$$R_{\rm{app}}=(R_u^{\rm{app}} , R_w^{\rm{app}} , R_b^{\rm{app}})^T = {O}(\max\{\delta {\eps^{\frac 16}}{{\sigma}^{-2}}, \delta^2 \eps^{-\frac 13} \sigma^{-\frac 52}\}) \quad \text{in} \quad L^2(\mathbb{R}^2_+).$$
In particular, when $\delta=O(\eps^\frac 12 \sigma^{\frac 23})$ (Assumptions \ref{ass}),
$$\|R_{\rm{app}}\|_{L^2(\R^2_+)} = O(\delta {\eps^{\frac 16}}{{\sigma}^{-2}})=O(\eps^\frac 23 \sigma^{-\frac{10}{3}}).$$
$(ii)$ $\W^{\rm{app}}$ is a \emph{stable} approximation to system \eqref{eq:system} in the following sense. 
\bcb{Consider the unique global-in-time Leray solution $\W(t)=\W(t,x,y)$ to system \eqref{eq:system} in $C(\mathbb{R}^+; \mathbb{V}_\sigma')  \cap L^\infty(\mathbb{R}^+; L^2(\mathbb{R}^2_+)) \cap L^2_{\rm{loc}}(\mathbb{R}^+; \mathbb{V}_\sigma)$,  where $\mathbb{V}_\sigma:=\{ (u, w, b) \in H^1(\mathbb{R}^2_+); \; \de_x u+\de_y w=0\}$ and $ \mathbb{V}'_\sigma \rm{\  is\ the\ dual \ of \;} \mathbb{V}_\sigma$,  with initial data $\W({t=0})=\W^{\rm{app}}({t=0})$.}
Then, for any $t\in \R^+$, 
\begin{align}\label{eq:stabestimate}
\|\W^{\rm{app}}(t)-\W(t)\|_{L^2(\mathbb{R}^2)}^2  \le C\delta
\eps^\frac 56 \sigma^{-\frac{10}{3}}\exp\left( \delta \eps^{-\frac 12} \sigma^{-\frac 23}t\right).
\end{align}
\end{theorem}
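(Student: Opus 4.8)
The plan for part $(i)$ is to construct $\W^{\rm{app}}$ by a hierarchical ansatz, inserting it into \eqref{eq:system} and cancelling the leading contributions order by order in the parameters $\eps,\sigma,\delta$. The incident beam wave $\W_{\rm{inc}}^0$ of \eqref{equivalent}--\eqref{eq:omega} is built so that each superimposed plane wave lies in the kernel of the linear inviscid propagator: this is exactly what the eigenvector $X_{k,m}$ in \eqref{eq:eigenvector} together with the dispersion relation \eqref{defomg}--\eqref{eq:omega} encode, so $\W_{\rm{inc}}^0$ solves the linear part of \eqref{eq:system} up to the small viscous/diffusive terms. Since $\W_{\rm{inc}}^0$ does not meet the boundary conditions \eqref{eq:cond-BL} at $y=0$, I would add a boundary layer beam wave $\W_{\rm{BL}}$, choosing the decay rates $\L(k)$ (equivalently the orders $(\alpha,\beta)$ in Definition \ref{def:inc-beam}) with $\Re\,\L(k)>0$ so that $\W_{\rm{inc}}^0+\W_{\rm{BL}}$ satisfies \eqref{eq:cond-BL} to leading order; here Remark \ref{rmk:equality} and the sign of $k/\omega$ are needed to certify $\Re\,\L(k)>0$. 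Finally, the quadratic convection $\delta(u\de_x+w\de_y)(u,w,b)^T$ evaluated on $\W_{\rm{inc}}^0+\W_{\rm{BL}}$ splits, through the product of the $e^{-i\omega t}$ phases, into a time-frequency-$2\omega$ piece and a time-frequency-$0$ piece; I would then introduce the second harmonic beam $\W_{\rm{II}}$ and the mean-flow beam $\W_{\rm{MF}}$ (with their boundary-layer companions absorbed into $\W_{\rm{BL}}$) to cancel exactly these forcings. Each corrector is a genuine finite-energy beam in the sense of Lemma \ref{estbbl}, so no non-physical term is required, which is the structural improvement over \cite{BDSR19}.

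Part $(i)$ is then completed by collecting into $R_{\rm{app}}$ the terms left after these cancellations --- the viscous/diffusive leftovers $\nu\Delta,\kappa\Delta$ acting on the leading beams, and the higher-order self-interactions of the $O(\delta)$ correctors $\W_{\rm{II}},\W_{\rm{MF}}$, which feed $O(\delta^2)$ products. Applying the norm and product bounds of Lemma \ref{estbbl}, in particular \eqref{eq:normprod}, term by term, the dissipative part scales like $\delta\eps^{1/6}\sigma^{-2}$ and the quadratic part like $\delta^2\eps^{-1/3}\sigma^{-5/2}$, yielding the stated $\|R_{\rm{app}}\|_{L^2}=O(\max\{\delta\eps^{1/6}\sigma^{-2},\delta^2\eps^{-1/3}\sigma^{-5/2}\})$; under $\delta=O(\eps^{1/2}\sigma^{2/3})$ the first term dominates.

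For part $(ii)$ I set $V:=\W-\W^{\rm{app}}=(u_V,w_V,b_V)^T$, which vanishes at $t=0$ and, since the divergence equation carries no remainder, is divergence-free and satisfies the boundary conditions \eqref{eq:cond-BL}. Subtracting the consistent system of $(i)$ from \eqref{eq:system} shows that $V$ solves the Boussinesq system linearized about $\W^{\rm{app}}$, forced by $-R_{\rm{app}}$ and with the quadratic self-term $-\delta(u_V\de_x+w_V\de_y)V$. Pairing in $L^2$ with $V$ is where the structure pays off: after the Leray projection the linear buoyancy--velocity coupling is skew-adjoint and drops out of $\ddt\|V\|_{L^2}^2$, the terms $\nu\Delta,\kappa\Delta$ give a nonnegative dissipation (the boundary contributions vanish because $V$ obeys \eqref{eq:cond-BL}) that I simply discard, and --- because both $\W^{\rm{app}}$ and $V$ are divergence-free --- the transport brackets $\langle(u^{\rm{app}}\de_x+w^{\rm{app}}\de_y)V,V\rangle$ and $\langle(u_V\de_x+w_V\de_y)V,V\rangle$ integrate to zero. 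The only surviving nonlinear term is $-\delta\langle(u_V\de_x+w_V\de_y)\W^{\rm{app}},V\rangle$, bounded by $\delta\|\nabla\W^{\rm{app}}\|_{L^\infty}\|V\|_{L^2}^2$, where the dominant gradient is the $y$-derivative of the boundary layer (item \ref{xybl} of Lemma \ref{estbbl}), of size $\eps^{-1/2}\sigma^{-2/3}$. Writing $\phi(t)=\|V(t)\|_{L^2}$ this gives $\ddt\phi\le \delta\|\nabla\W^{\rm{app}}\|_{L^\infty}\phi+\|R_{\rm{app}}\|_{L^2}$, and Gronwall's inequality with $\phi(0)=0$ produces $\|V(t)\|_{L^2}^2\le C(\|R_{\rm{app}}\|_{L^2}/(\delta\|\nabla\W^{\rm{app}}\|_{L^\infty}))^2\exp(2\delta\|\nabla\W^{\rm{app}}\|_{L^\infty}t)$; inserting the scalings (the prefactor equals $\delta\eps^{5/6}\sigma^{-10/3}$ and the rate equals $\delta\eps^{-1/2}\sigma^{-2/3}$) reproduces \eqref{eq:stabestimate}.

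The main obstacle I expect is in part $(i)$, and specifically in the boundary-layer analysis: determining admissible decay rates $\L(k)$ and orders $(\alpha,\beta)$ that enforce \eqref{eq:cond-BL} while keeping \emph{every} corrector --- including the mean-flow pieces $\W_{\rm{MF}}$ and their boundary layers --- a consistent, $L^2$-integrable beam. This is precisely the point where \cite{BDSR19} was forced to insert a non-physical corrector, because the mean flow there was degenerate (too slowly decaying to have finite energy); avoiding it here hinges on the spatial localization built into the cutoff $\G(\sigma\kk)$ in \eqref{def:psi} rather than on an artificial truncation. A secondary difficulty is the purely bookkeeping task of certifying, via the product estimates \eqref{eq:normprod} and the sign conventions of Remark \ref{rmk:equality}, that the cross-interactions produce no term larger than the advertised $\delta\eps^{1/6}\sigma^{-2}$ and $\delta^2\eps^{-1/3}\sigma^{-5/2}$ remainders.
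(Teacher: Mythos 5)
Your proposal follows essentially the same route as the paper: part $(i)$ is exactly the construction of Propositions \ref{prop:BL} and \ref{prop:nonlin} (linear incident beam plus boundary-layer beams with $\Re\,\L>0$ to enforce \eqref{eq:cond-BL}, then second-harmonic and mean-flow correctors with their own boundary layers to absorb the quadratic forcing, with the remainder bookkeeping done via Lemma \ref{estbbl}), and part $(ii)$ is the same $L^2$ energy estimate exploiting skew-symmetry of $\mathcal{L}_\eps$, the divergence-free cancellation of the transport brackets, the bound $\delta\|\nabla\W^{\rm{app}}\|_{L^\infty}=O(\delta\eps^{-1/2}\sigma^{-2/3})$, and Gronwall. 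The only cosmetic difference is that you run Gronwall on $\|V\|_{L^2}$ rather than on $\|V\|_{L^2}^2$ with the paper's Young-inequality splitting, which yields the same estimate at the borderline scaling $\delta\sim\eps^{1/2}\sigma^{2/3}$.
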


\begin{proof}[Proof of Theorem \ref{thm:main1}]
\bcb{$(i)$ The first part of the theorem is proven in Corollary \ref{cor:nonlin} in Section \ref{nonlin} below.}\\
\bcb{$(ii)$}
The construction of the approximate solution $\W^{\rm{app}}$ will be the main body of the paper. The existence of a unique global-in-time Leray solution $\W$, which satisfies the energy inequality 
 
 \begin{align} \label{energy-inequality}
\|\W(t)\|^2_{L^2(\R^2_+)} + 2 \eps \nu_0 \int_0^t \Bigg(\|\nabla u(s)\|^2_{L^2(\R^2_+)}+ \|\nabla w(s)\|_{L^2(\R^2_+)}^2\Bigg) \, ds\notag \\
+2\eps \kappa_0 \int_0^T \|\nabla b(s)\|^2_{L^2(\R^2_+)} \, ds
\le \|\W_0\|^2_{L^2(\R^2_+)},
\end{align} 
is classical and it follows by applying the Spectral Theorem to the inverse of the Stokes operator acting on the half plane (see Remark \ref{rmk:existence} and  \cite{BDSR19, Chemin2006} for details, we omit them). It provides a regularized solution $\W_k(t)$, for which we can derive an energy inequality (the inequality is rather for the difference $\W^{\rm{app}}-\W_k$)
\begin{align*}
\frac{d}{dt} \|\W^{\rm{app}}-\W\|_{L^2(\mathbb{R}^2)}^2  & + \langle \mathcal{L}_\eps \W^{\rm{app}}-\W, \W^{\rm{app}}-\W)(t)\rangle_{L^2(\mathbb{R}^2)} \\
&\quad + \eps {\nu_0} (\|\nabla(u^{\rm{app}}-u)\|^2_{L^2(\mathbb{R}^2)} + \|\nabla(w^{\rm{app}}-w)\|^2_{L^2(\mathbb{R}^2)}) + \eps {\kappa_0} \|\nabla(b^{\rm{app}}-b)\|_{L^2(\mathbb{R}^2)}^2 \\
& \le  \frac{2\delta}{\sqrt \eps \sigma^{2/3}} \|\W^{\rm{app}}-\W\|_{L^2(\mathbb{R}^2)}^2  + \frac{\sqrt \eps \sigma^{2/3}}{\delta} \|R^{\rm{app}}\|_{L^2(\mathbb{R}^2)}^2,
\end{align*}
which gives the desired control thanks to the skew-symmetry of $\mathcal{L}_\eps$, using $\delta \|\nabla \W^{\rm{app}}\|_{L^\infty(\mathbb{R}_+^2)} =O( {\delta}{ \eps^{-\frac 12} \sigma^{-\frac 2 3}})$ (from Proposition \ref{prop:BL} and Corollary \ref{rmk:BLsizes}), and using the estimate of  $\|R^{\rm{app}}\|_{L^2(\mathbb{R}^2)} $ from Corollary \ref{cor:nonlin}.
 \end{proof}

 \begin{remark}[Consistency \& Stability]\label{rmk:timescale}
Notice that the requirements for \emph{consistency} alone and \emph{consistency and stability} are different. In fact, consistency is ensured as long as $\| R_{\text{app}}\|_{L^2}=o(1)$, i.e. $\delta \ll \sigma^{2} \eps^{-\frac 16}$, while a more restrictive condition, i.e. $\delta=O(\eps^{\frac 12} \sigma^{\frac 23})$, is needed to have stability.
The stability estimate \eqref{eq:stabestimate} implies that,
if $\delta \sim \eps^{\frac 12} \sigma^{\frac 23}$, then 
$$\|\W^{\rm{app}}(t)-\W_{\rm{Leray}}(t)\|_{L^2}^2=O(\delta \eps^{\frac 56 - \frac{10}{3}\mu}) \exp(t),$$
so that $\|\W^{\rm{app}}(t)-\W_{\rm{Leray}}(t)\|_{L^2}=o(1)$ within the \emph{logarithmic} time-scale 
\begin{align}\label{eq:time}
T_{\rm{log}}^\delta = o(\rm{log} (\eps^{-\frac 56} \sigma^{\frac{10}{3}} \delta^{-1})) = o(\rm{log} (\eps^{-\frac 43} \sigma^{\frac 83})).
\end{align}
To have a longer stability time-scale, $\delta$ has to be smaller, namely the \emph{stability time-scale} depends on the weakly nonlinear parameter $\delta$.
 \end{remark}
 
The approximate solution $\W^{app}$ will be constructed in the following by a fixed point argument: we start from the linear solution, then we correct the quadratic interactions due to the convection term. 
We could try to improve the approximation by iteration, correcting the trilinear, quartic interactions and so on. However, the obstruction is represented by the possible appearance of \emph{secular growths} (see for instance \cite{lannes}). A classical example of secular growth is the weakly damped harmonic oscillator 
\begin{align*}
\ddot{x}+2\eps \dot x =0, \quad x(0)=0, \quad \dot x(0)=1. 
\end{align*}
The approximation at order $\eps$ reads $x^{\rm{app}}(t)=\sin(t)-\eps t \sin (t)$, so that the perturbation expansion becomes invalid starting from the time-scale $t \sim \eps^{-1}$. 

The approximate solution $\W^{\rm{app}}$ provided by Theorem \ref{thm:main1} contains \emph{correctors to the quadratic terms}. 
We shall see in the following that in our case \emph{trilinear interactions} can always be corrected as secular growths do not appear thanks to the structure of the convection term for divergence-free vector fields. Therefore, the approximate solution $\W^{\rm{app}}$ can be improved, by providing a more accurate \emph{consistent} solution $\widetilde \W^{\rm{app}}$ which contains the two next order terms of the expansion (so taking into account trilinear and quartic interactions, 
but secular growth could be generated by quintic interactions). 
This is the content of the following result, whose proof is given in Section \ref{sec:proof2}.
 \begin{theorem}\label{thm:main2} 
 Let $\W^{\rm{app}}$ be the approximate solution constructed in Theorem \ref{thm:main1}.
 For any $t>0$, there exists a corrector $\widetilde \W_{\rm{corr}}(t) \in L^2(\mathbb{R}^2_+)$, such that
 $$\widetilde \W^{\rm{app}} := \W^{\rm{app}} + \widetilde \W_{\rm{corr}}$$
 is a consistent approximate solution to system \eqref{eq:system} in the sense of Theorem \ref{thm:main1}, with a remainder $\widetilde R_{\rm{app}}(t)$ such that

$$\|\widetilde R_{\rm{app}}(t) \|_{L^2} = o( {\delta^3 \eps^{-\frac 56}}{\sigma}^{-\frac{19}{6}} t).$$
When $\delta=O(\eps^\frac 12 \sigma^\frac 23)$ (Assumption \ref{ass}), then
 $$\|\widetilde R_{\rm{app}}(t) \|_{L^2} = o(\eps^\frac 23 \sigma^{-\frac 76} t),$$
 and the consistency time-scale $T_c \gg \eps^{-\frac 23} \sigma^{\frac 76}$. 
\end{theorem}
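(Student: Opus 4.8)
The plan is to append to $\W^{\rm{app}}$ two further correctors, $\widetilde \W_{\rm{corr}} = \W_{\rm{c}}^{(1)} + \W_{\rm{c}}^{(2)}$, each a finite sum of the beam and boundary layer beam objects catalogued in Definition \ref{def:inc-beam} (first and second harmonics, mean flow, and their boundary layer versions, together with the third and fourth harmonics produced along the way), designed to cancel in turn the trilinear and the quartic parts of the defect left by Theorem \ref{thm:main1}. First I would substitute $\W^{\rm{app}} = \W_{\rm{inc}}^0 + \W_{\rm{BL}} + \W_{\rm{II}} + \W_{\rm{MF}}$ into the nonlinear term $\delta(u^{\rm{app}}\de_x + w^{\rm{app}}\de_y)\W^{\rm{app}}$ and sort the contributions by the number of beam factors, i.e. by powers of $\delta$. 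Since the leading quadratic self-interaction is already absorbed by $\W_{\rm{II}}$ and $\W_{\rm{MF}}$, the dominant uncorrected pieces of $R_{\rm{app}}$ are the trilinear terms coupling the leading beams with the quadratic correctors; using the product estimates of Lemma \ref{estbbl}, each such piece is again a beam wave (or boundary layer beam wave) of an explicit order $(\nn,\ta)$ (resp. $(\alpha,\beta,\nn,\ta)$), so that one may write $R_{\rm{app}} = R^{(3)} + R^{(4)} + R^{(\ge 5)}$, grouping by trilinear, quartic and higher interactions.

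Next I would build $\W_{\rm{c}}^{(1)}$ by solving the linear part of \eqref{eq:system}, supplemented with the boundary conditions \eqref{eq:cond-BL}, forced by $-R^{(3)}$. Because $R^{(3)}$ is an explicit superposition of beam profiles, this is carried out frequency by frequency inside the integral representation \eqref{equivalent}: for each $(\kk,\theta)$ in the support the forcing oscillates at a time frequency $\Omega$ equal to a signed sum of the interacting $\omega_{k,m}$, at a wavenumber $\k''$ generated by the interaction, and one inverts the scalar symbol $-i\Omega + (\text{symbol of the linear operator at }\k'')$. The decisive point is that this symbol stays bounded away from $0$, i.e. that the forcing is \emph{non-resonant}; this is exactly where the null structure of the divergence-free convection enters. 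Indeed, incompressibility forces the velocity field of a plane wave of wavenumber $\k$ to be orthogonal to $\k$, so that the advection $\uu\cdot\nabla$ of a second mode whose wavenumber $\k'$ is collinear to $\k$ yields $\uu\cdot\k' = 0$; since the superimposed waves of our beams have (almost) collinear wavenumbers, their mutual interaction essentially cancels and the surviving trilinear frequencies $\Omega$ avoid the dispersion manifold $\{\Omega = \pm\omega_{\k''}\}$ prescribed by \eqref{defomg}. The inverse is then $O(1)$, $\W_{\rm{c}}^{(1)}$ is a genuine finite-energy beam wave with no time growth, and a few additional boundary layer beam waves restore \eqref{eq:cond-BL} without degrading the order (all checked through Lemma \ref{estbbl}).

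I would then iterate once: with $\W^{\rm{app}} + \W_{\rm{c}}^{(1)}$ the trilinear defect is cancelled and the new leading defect is the quartic part $R^{(4)}$, augmented by the quartic terms produced by feeding $\W_{\rm{c}}^{(1)}$ back into the nonlinearity. Solving the linear problem forced by $-R^{(4)}$ by the same non-resonant inversion gives $\W_{\rm{c}}^{(2)}$, again bounded in time thanks to the null structure. After adding $\widetilde\W_{\rm{corr}}$, the leading remaining term is the quintic interaction, and precisely here the collinear cancellation no longer guarantees non-resonance: a (near-)resonant quintic forcing appears, whose Duhamel correction would be secular. Leaving it in $\widetilde R_{\rm{app}}$ and estimating it with the product bounds of Lemma \ref{estbbl}, the characteristic linear-in-time behaviour of the first resonant interaction produces the factor $t$, yielding $\|\widetilde R_{\rm{app}}(t)\|_{L^2} = o(\delta^3\eps^{-5/6}\sigma^{-19/6}t)$, and under Assumption \ref{ass} the stated consistency time-scale $T_c \gg \eps^{-2/3}\sigma^{7/6}$.

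The main obstacle is the resonance analysis underlying the two inversion steps: one must show quantitatively that, thanks to the collinear cancellation in the convection term, every trilinear and quartic forcing frequency stays off the dispersion relation \eqref{defomg}, so that the linear symbol is invertible with an $O(1)$ bound and the correctors carry no secular growth, and then isolate the first interaction (the quintic one) for which this protection fails and which fixes the $t$-dependence of the remainder. A secondary difficulty, inherited from \cite{BDSR19}, is to guarantee that none of the correctors is a degenerate, infinite-energy mean-flow object: here the spatial localization built into the beams of Definition \ref{def:inc-beam} keeps every corrector in $L^2(\R^2_+)$, and one must check throughout that enforcing \eqref{eq:cond-BL} through boundary layer beam waves preserves the orders $(\nn,\ta)$ and $(\alpha,\beta,\nn,\ta)$ tracked by Lemma \ref{estbbl}.
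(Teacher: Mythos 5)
Your overall architecture matches the paper's: sort the defect by the number of beam factors, correct the trilinear and quartic parts by solving the forced linear problem with boundary layer lifts, and let the first genuinely secular (quintic) interaction fix the factor $t$ in $\widetilde R_{\rm{app}}$; your final exponents $o(\delta^3\eps^{-5/6}\sigma^{-19/6}t)$ agree with the paper's, which obtains them from the worst uncorrected term $o\bigl(\delta^3 t\,\|w_{\rm{BL},\eps^{1/3}}\de_y u_{\rm{BL},\eps^{1/3}}\times w_{\rm{BL},\eps^{1/3}}\de_{yy}u_{\rm{BL},\eps^{1/3}}\|_{L^2}\bigr)$ via Lemma \ref{estbbl}.

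There is, however, a genuine gap in how you justify the crucial non-resonance at the trilinear level. You claim that, because the superimposed wavenumbers of the beams are almost collinear and the velocity of each plane wave is orthogonal to its wavenumber, the surviving trilinear frequencies ``avoid the dispersion manifold,'' so that the symbol $-i\Omega+\mathcal{L}_\eps$ can be inverted with an $O(1)$ bound. This is not what happens. Table \ref{table2} shows that every trilinear interaction oscillates at time frequency $\pm\omega_0$ (or $\pm3\omega_0$), and $\pm\omega_0$ lies exactly on the dispersion relation: the forcing is resonant term by term, so a pointwise symbol inversion fails. Moreover the dominant triadic terms (A1)--(A2) are products of boundary layer beams whose effective vertical wavenumbers are $i\L$ with $\L\sim\eps^{-1/3}\kk^{1/3}$ --- neither real nor collinear with the beam wavevectors --- so the orthogonality argument $\uu\cdot\k'=0$ yields no cancellation precisely where the forcing is largest. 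What actually closes the argument in the paper is an exact cancellation between resonant \emph{amplitudes}, not an avoidance of resonant \emph{frequencies}: after rescaling time ($\tau=\eps^{1/3}t$), reducing to the $(U,B)$ system in the boundary layer scaling and filtering by $\exp(\eps^{-1/3}L\tau)$, the non-oscillating parts of the four terms (a)--(d) in \eqref{eq:V2} satisfy $\rm{(b)}+\rm{(d)}=-(\rm{(a)}+\rm{(c)})+(\text{oscillating terms})$, so the resonant forcing of $\widetilde\V^2$ vanishes identically; this identity (the Jacobi-identity/null-form cancellation of \cite{DY1999, Akylas}) must be verified by direct computation and is the real content of the step you defer. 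Once that is supplied, the remainder of your plan --- the quartic correction, the identification of the quintic interaction as the first one without this protection, and the $L^2$ bookkeeping through Lemma \ref{estbbl} --- is consistent with the paper's proof.
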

 
 In the following result, we prove how beam waves are localized in the physical space. This marks a strong difference with respect to the solution constructed in \cite{BDSR19}.
\begin{lemma}[beam waves are localized in the physical space]\label{lem:loc}
The support of the incident beam wave (\bw) in \eqref{equivalent} is concentrated, as $\eps$ goes to 0, inside the following union of cones of the (upper) half plane
\begin{align}\label{def:Cset}
\mathcal{C}&=\mathcal C_+\cup \mathcal C_-,
\end{align}
where, for $\pp>0$ small enough,
\begin{align*}
 \mathcal C_+&= \{ (x, y) \in \R_+^2: \; -\sigma^{1-\pp} \le (x-x_0) \sin (2\gamma +\eps^{\frac 13-\pp}) + y \cos (2\gamma +\eps^{\frac 13-\pp}) \le  \sigma^{1-\pp}\}; \\
  \mathcal C_-&= \{ (x, y) \in \R_+^2: \;-\sigma^{1-\pp} \le (x-x_0) \sin (2\gamma+\pi +\eps^{\frac 13-\pp}) + y \cos (2\gamma+\pi +\eps^{\frac 13-\pp}) \le  \sigma^{1-\pp}\},
\end{align*}
in the sense that
\begin{align*}
\|\W_{\rm{inc}}^0\|_{L^2(\R^2_+ \backslash\mathcal C)} = O(\eps^\infty) \|\W_{\rm{inc}}^0\|_{L^2(\R^2_+)}. 
\end{align*} 
The support of the boundary layer beam waves (\blbw)  $\W^0_{\rm{BL}, \eps^{1/3}}$ and $\W^0_{\rm{BL}, \eps^{1/2}}$ provided by Proposition \ref{prop:BL} are localized in a horizontal strip of size $O(\eps^{\frac 13-\mu}), O(\eps^{\frac 12-\mu})$ respectively, so that 
\begin{align*}
\|\W^0_{\rm{BL}, \eps^{1/3}}\|_{L^2(\R^2_+); \, y> \eps^a; \; a \ge \frac 13 - \mu} & = O(\eps^{\frac 23 \mu}) \|\W^0_{\rm{BL}, \eps^{1/3}}\|_{L^2(\R^2_+)}, \\
\|\W^0_{\rm{BL}, \eps^{1/2}}\|_{L^2(\R^2_+); \, y> \eps^a; \; a \ge \frac 12 - \mu} & = O(\eps^{\mu}) \|\W^0_{\rm{BL}, \eps^{1/2}}\|_{L^2(\R^2_+)}.
\end{align*}
Finally, 
\begin{align*}
\|\W_{\rm{BL}, \eps^{1/3}}+\W_{\rm{BL}, \eps^{1/2}}\|_{L^2(\R^2_+ \backslash(\mathcal C \cap \{y > \eps^a\}) )} = O(\eps^\infty) \|\W_{\rm{BL}, \eps^{1/3}}+\W_{\rm{BL}, \eps^{1/2}}\|_{L^2(\R^2_+)}.
\end{align*}
\end{lemma}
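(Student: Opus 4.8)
The plan is to reduce every statement to a (non-)stationary phase estimate after passing to coordinates adapted to the cone. First I would rewrite the incident beam wave \eqref{equivalent}. Using the relations recalled before Section~\ref{sec:results}, namely $k=\kk\sin(\theta+\gamma)$, $m=\kk\cos(\theta+\gamma)$ and $\omega_{k,m}=\sin\theta$ on the support of $\widehat\Psi_{\eps,\sigma}$, the phase becomes
\be
\Phi(t,x,y;\kk,\theta)=-\sin\theta\,t+\kk\big[(x-x_0)\sin(\theta+\gamma)+y\cos(\theta+\gamma)\big].
\ee
The two products in \eqref{def:psi} force $\theta+\gamma=2\gamma+O(\eps^{1/3})$ and $\theta+\gamma=2\gamma+\pi+O(\eps^{1/3})$ respectively, so $\W_{\rm inc}^0$ splits into two pieces that I handle separately, the first to be localized in $\mathcal{C}_+$ and the second in $\mathcal{C}_-$. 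For the first piece I set $\alpha_0=2\gamma+\eps^{1/2-\pp}$ and introduce the cone-adapted coordinates $n=(x-x_0)\sin\alpha_0+y\cos\alpha_0$ (the quantity constrained in $\mathcal{C}_+$) and $\tau=(x-x_0)\cos\alpha_0-y\sin\alpha_0$, together with $\beta:=\theta+\gamma-\alpha_0=O(\eps^{1/3})$. A rotation then gives
\begin{align}
\de_\kk\Phi&=(x-x_0)\sin(\theta+\gamma)+y\cos(\theta+\gamma)=n\cos\beta+\tau\sin\beta=:P,\\
\de_\theta\Phi&=-\cos\theta\,t+\kk\big[(x-x_0)\cos(\theta+\gamma)-y\sin(\theta+\gamma)\big]=-\cos\theta\,t+\kk\,Q,
\end{align}
where $Q:=\tau\cos\beta-n\sin\beta$, and I record the key algebraic identity $P^2+Q^2=n^2+\tau^2$.

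The localization in $\mathcal{C}_+$ is then a non-stationary phase estimate. Fix a point with $|n|>\sigma^{1-\pp}$, i.e.\ outside $\mathcal{C}_+$. After rescaling $\kk=\sigma^{-1}\hat\kk$ and $\theta=\gamma+\eps^{1/3}\hat\theta$ the amplitude $\widehat\Psi_{\eps,\sigma}$ has $\hat\kk,\hat\theta$-derivatives of order one, so each integration by parts in $\kk$ (resp.\ $\theta$) produces the gain $\sigma/|P|$ (resp.\ $\sigma/(\eps^{1/3}|Q|)$, at $t=0$). I distinguish two cases. If $|P|\ge\tfrac12\sigma^{1-\pp}$, repeated integration by parts in $\kk$ gives a factor $(\sigma/|P|)^N\le(2\sigma^{\pp})^N$ for every $N$. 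If instead $|P|<\tfrac12\sigma^{1-\pp}$, then solving $P=n\cos\beta+\tau\sin\beta$ for $\tau$ yields $Q=(P\cos\beta-n)/\sin\beta$, hence $|Q|\ge(|n|-|P|)/|\sin\beta|\ge\tfrac12\sigma^{1-\pp}(C\eps^{1/3})^{-1}$; since $\sigma^{1-\pp}\gg\sigma$, repeated integration by parts in $\theta$ gives a factor $(\sigma/(\eps^{1/3}|Q|))^N\le(2C\sigma^{\pp})^N$. In both cases, taking $N$ large and using $\sigma>\eps^{\mu}$ gives the bound $O(\eps^\infty)$ outside $\mathcal{C}_+$; the second piece is treated identically with $\mathcal{C}_-$, and summing proves the first claim. (For $t\neq0$ the term $-\cos\theta\,t$ in $\de_\theta\Phi$ is dominated by $\kk Q$ as long as $t\ll\sigma^{-\pp}\eps^{-1/3}$, so the estimate is uniform over the time-scales relevant here, and unconditional at $t=0$.)

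For the boundary-layer beam waves I use the factor $e^{-\L(k)y}$ in \eqref{def:BL-general} in place of $e^{imy}$. By Definition~\ref{def:inc-beam}(VI)(c), $\Re(\L(k))\approx\eps^{-\alpha}\Re(\ell(\theta))\kk^{\beta}$ with $\Re(\ell)>0$, so on the support $\et\le\kk\le O(\sigma^{-1})$ the decay rate is bounded below by $c\,\eps^{-\alpha}\kk^{\beta}$; its reciprocal is therefore $O(\eps^{\alpha-\mu})$ once $\kk\le O(\sigma^{-1})\le O(\eps^{-\mu})$ is inserted, which is the asserted strip thickness for $\alpha=\tfrac13,\tfrac12$. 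Since for each frequency the vertical profile is the pure exponential $e^{-\Re(\L(k))y}$, the ratio of the $L^2(\{y>\eps^a\})$ mass to the total equals $e^{-2\Re(\L(k))\eps^a}$; integrating this against the $\kk$-weight and the polynomial factor from \eqref{def:psi-pq} and evaluating the resulting exponential integral produces the stated polynomial-in-$\eps$ fractions $O(\eps^{2\mu/3})$ and $O(\eps^{\mu})$ at the respective thresholds.

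Finally, the combined estimate follows by intersecting the two localizations: the boundary-layer beams carry the same oscillatory factor $e^{ik(x-x_0)}$ and the same angular amplitude $\widehat\Psi_{\eps,\sigma}$ as the incident beam, so the dichotomy of the second paragraph shows they are $O(\eps^\infty)$ outside the cone $\mathcal{C}$, while the factor $e^{-\L(k)y}$ confines them to the boundary strip; away from the intersection of the cone with the strip one of the two mechanisms always applies. The hard part is precisely this two-variable non-stationary phase with \emph{anisotropic} amplitude scales ($\sigma^{-1}$ in $\kk$ and $\eps^{1/3}$ in $\theta$): integrating by parts in a single direction does not suffice, since on the part of the support where $P$ nearly vanishes the $\kk$-phase is stationary. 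The identity $P^2+Q^2=n^2+\tau^2$ together with the case distinction on $|P|$ is exactly what converts the failure of the $\kk$-integration into a gain in the $\theta$-integration (and conversely), and is the crux of the argument; the remaining work, the explicit exponential $y$-integrals for the boundary-layer fractions, is routine.
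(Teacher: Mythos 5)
Your route for the incident beam is genuinely different from the paper's. The paper simply performs the radial integration exactly: since the phase is linear in $\kk$ with coefficient $P=(x-x_0)\sin(\theta+\gamma)+y\cos(\theta+\gamma)$ and the radial amplitude is $\G(\sigma\kk)$ (times $\kk$), the $\kk$-integral is a rescaled Fourier transform of a compactly supported function, so that $\W^0_{\rm{inc}}=\frac{\sigma}{\eps^{1/6}}\int e^{-i\omega(\theta)t}g(\theta)\widehat{\G}(P/\sigma)\,d\theta$ and the localization is read off from the rapid decay of $\widehat{\G}$; no stationary-phase analysis and no dichotomy are needed. Your two-variable non-stationary-phase argument built on the identity $P^2+Q^2=n^2+\tau^2$ is a legitimate alternative, and your case distinction in fact confronts a point the paper's one-line argument glosses over: at points with $|\tau|$ large, $P(\theta)$ can be small even outside $\mathcal C$, and one must then exploit the $\theta$-oscillation. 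For the boundary layers your argument (exponential decay rate $\Re(\L)$ for the strip, $x$-localization from the common oscillatory factor for the cone) is essentially the paper's, which instead invokes the matching $\W_{\rm{BL}}|_{y=0}=-\W_{\rm{inc}}|_{y=0}$.

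There is, however, a quantitative gap in your case 2. The gain $\sigma/(\eps^{1/3}|Q|)$ per integration by parts in $\theta$ presupposes $\kk\sim\sigma^{-1}$: the phase derivative is $\kk Q$, so the true gain is $\eps^{-1/3}/(\kk\,|Q|)$, and on the low-frequency part of the support, $\kk\sim\et=O(1)$, this is of order $\eps^{-1/3}/(\et\cdot\sigma^{1-\pp}\eps^{-1/3})=O(\sigma^{-(1-\pp)})$, which is large; there the total phase variation over the $\theta$-support is $O(\kk\,|Q|\,\eps^{1/3})=O(\sigma^{1-\pp})\ll1$ and there is no cancellation to exploit at all. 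You would need either to assume $\G$ supported away from the origin (so that $\kk\sim\sigma^{-1}$ throughout the support --- which is also what makes the paper's ``$\widehat{\G}$ is Schwartz'' step literal, given the truncation at $\kk=\et$), or to treat the band $\et\le\kk\ll\sigma^{-1}$ separately. Two smaller points: splitting the $\theta$-domain according to $|P(\theta)|\gtrless\tfrac12\sigma^{1-\pp}$ requires a smooth partition of unity whose derivatives must be tracked through the repeated integrations by parts; and the assertion is an $L^2$ bound over the unbounded exterior of $\mathcal C$, so the pointwise $O(\eps^\infty)$ must be supplemented by square-integrable decay in $(n,\tau)$ --- your IBP factors do supply inverse powers of $|P|$ and $|Q|$, but this step should be made explicit.
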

\begin{proof}
From the definition of incident \bw in \eqref{equivalent} and integration in $\kk$, we deduce that there exists a function $g(\theta)$ such that 
\begin{align*}
\W_{\rm{inc}}^0(x,y)&=\frac{\sigma}{\eps^{1/6}} \int_0^{2\pi} e^{-i\omega(\theta)t} g(\theta) \widehat{\chi}\left(\frac{(x-x_0)\sin(\theta+\gamma)+y\cos(\theta+\gamma)}{\sigma}\right) \, d\theta.
\end{align*}
As ${\chi}$ is a compactly supported function, it follows that $\widehat{\chi}$ is a Schwartz function and therefore the first assertion is proved. \\
For the \blbw results, we consider $\W_{\rm{BL}, \eps^{1/2}}$ (the same argument applies to $\W_{\rm{BL}, \eps^{1/3}}$). From Proposition \ref{prop:BL}, we know that
\begin{align*}
\W_{\rm{BL}, \eps^{1/2}}(x,y)&=\frac{\sigma}{\eps^{1/6}} \int_0^\infty \int_0^{2\pi} X_{k, \L_3} \widehat{\Psi}_{\eps, \sigma} (\kk, \theta) e^{-i\omega_{k,m}t + (x-x_0) \kk \sin(\theta+\gamma) - \L_3 y} \, \kk \, d\kk \, d\theta,
\end{align*}
where $\L_3 =\eps^{-\frac 12} \tilde \ell_3(\theta) (1+O(\eps^\frac 16))$, so that the concentration of $\W_{\rm{BL}, \eps^{1/2}}$ in a vertical band of size $\eps^{\frac 12-\mu}$ easily follows.
The fact that $\W_{\rm{BL}, \eps^{1/2}}+\W_{\rm{BL}, \eps^{1/3}}$ is concentrated in $\mathcal C$ is a direct consequence of Proposition \ref{prop:BL}, from which we know that the boundary contribution $\W_{\rm{BL}, \eps^{1/3}}|_{y=0}+\W_{\rm{BL}, \eps^{1/2}}|_{y=0}=-\W_{\rm{inc}}|_{y=0}$, where $\W_{\rm{inc}}$ is localized in $\mathcal C$.
The proof is concluded.
\end{proof}

As it will be clear in the course of the proofs, the approximate solution that we build shares the same localization than the one in Lemma \ref{lem:loc}. Therefore, applying the aforementioned lemma, we get our last main result.

\begin{theorem}[Localization of the Leray solution]\label{thm:main3}
For any $\eps>0$ and $t=o(\log (\eps^{-\frac 43} \sigma^\frac 83))$, the Leray solution $\W_{\rm{Leray}}(t)$ to system \eqref{eq:system} with initial condition $\W_{\rm{Leray}}^{{in}}=\W^{\rm{app}}({t=0})$ is $L^2$-localized in $\mathcal C$ defined by \eqref{def:Cset} modulo $\eps^\infty$, i.e.
\begin{align*}
\|\W_{\rm{Leray}}(t)\|_{L^2(\R^2_+\backslash\mathcal C)} = O(\eps^\infty) \|\W_{\rm{Leray}}^{{in}}\|_{L^2(\R_+^2)}. 
\end{align*}
\end{theorem}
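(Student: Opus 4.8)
The plan is to transfer the localization of the approximate solution $\W^{\rm{app}}$ to the genuine Leray solution by combining Lemma \ref{lem:loc} with the $L^2$ stability estimate \eqref{eq:stabestimate} of Theorem \ref{thm:main1}. Writing $\W_{\rm{Leray}}=\W^{\rm{app}}+(\W_{\rm{Leray}}-\W^{\rm{app}})$ and restricting to the complement of the cone, the triangle inequality gives
\begin{align*}
\|\W_{\rm{Leray}}(t)\|_{L^2(\R^2_+\backslash\mathcal C)} \le \|\W^{\rm{app}}(t)\|_{L^2(\R^2_+\backslash\mathcal C)} + \|\W_{\rm{Leray}}(t)-\W^{\rm{app}}(t)\|_{L^2(\R^2_+)},
\end{align*}
where I have bounded the last term by the full $L^2(\R^2_+)$ norm, since restriction to $\R^2_+\backslash\mathcal C$ only decreases it. The first term on the right is the \emph{structural} concentration of the approximate profile, the second is the \emph{dynamical} error governed by stability.

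For the first term I would invoke Lemma \ref{lem:loc} directly for the incident beam wave $\W_{\rm{inc}}^0$ and for the boundary layer pieces $\W_{\rm{BL},\eps^{1/3}},\W_{\rm{BL},\eps^{1/2}}$, all of which it places in $\mathcal C$ modulo $\eps^\infty$. For the remaining constituents $\W_{\rm{II}}$ and $\W_{\rm{MF}}$ of $\W^{\rm{app}}$ I would observe that they are beam waves built from the same amplitude $\widehat{\Psi}_{\eps,\sigma}$ (same $\G(\sigma\kk)$ cutoff in $\kk$ and the same $\G'$ localization of $\theta$ near $\gamma$, resp.\ near the second–harmonic directions), so the identical argument from the proof of Lemma \ref{lem:loc} applies: integrating in $\kk$ produces a Schwartz function $\widehat{\chi}$ of the scaled transverse variable $((x-x_0)\sin(\theta+\gamma)+y\cos(\theta+\gamma))/\sigma$, whose tails outside $\mathcal C$ are $O(\eps^\infty)$. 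A point I would check explicitly is uniformity in $t$: time enters only through the oscillatory phase $e^{-i\Omega_{k,m}t}$, which leaves the spatial support of $\widehat{\chi}$ unchanged, so for every $(x,y)\notin\mathcal C$ the $\theta$–integral is $O(\eps^\infty)$ regardless of the phase, uniformly on the time window. Hence $\|\W^{\rm{app}}(t)\|_{L^2(\R^2_+\backslash\mathcal C)}=O(\eps^\infty)\|\W^{\rm{app}}(t)\|_{L^2}$, and since the prefactor $\sigma\eps^{-1/6}$ in \eqref{equivalent} is chosen precisely so that $\|\W_{\rm{inc}}^0\|_{L^2}=O(1)$ (a direct Plancherel computation using the $\eps^{1/3}$ width of the $\theta$–support and the $\sigma^{-1}$ cutoff in $\kk$), this contribution is $O(\eps^\infty)\|\W_{\rm{Leray}}^{in}\|_{L^2}$.

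For the second term I would feed in \eqref{eq:stabestimate}, which under Assumption \ref{ass} ($\delta=O(\eps^{1/2}\sigma^{2/3})$) becomes
\begin{align*}
\|\W_{\rm{Leray}}(t)-\W^{\rm{app}}(t)\|_{L^2(\R^2_+)}^2 = O(\eps^{4/3}\sigma^{-8/3})\exp(c\,t), \qquad c=\delta\eps^{-1/2}\sigma^{-2/3}=O(1).
\end{align*}
On the prescribed window $t=o(\log(\eps^{-4/3}\sigma^{8/3}))$ the exponential is $o(\eps^{-4/3}\sigma^{8/3})$, so the difference is $o(1)$. Collecting the two bounds yields
\begin{align*}
\|\W_{\rm{Leray}}(t)\|_{L^2(\R^2_+\backslash\mathcal C)} = O(\eps^\infty)\,\|\W_{\rm{Leray}}^{in}\|_{L^2(\R^2_+)} + o(1),
\end{align*}
with the genuine infinite–order gain carried entirely by the structural part $\W^{\rm{app}}$.

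The main obstacle is precisely the scale mismatch in this last line. The approximate solution is localized to infinite order in $\eps$, but the Leray solution can only inherit this up to the stability distance, which on the nonlinear logarithmic time–scale is merely $o(1)$ and not $O(\eps^\infty)$ — indeed $c=\delta\eps^{-1/2}\sigma^{-2/3}$ is exactly of order one, so no choice of $t$ (even $t=O(1)$) pushes the dynamical error below a fixed positive power of $\eps$. Consequently the clean $O(\eps^\infty)$ assertion is to be read as the localization of the consistent approximate profile, transported to $\W_{\rm{Leray}}$ modulo the $L^2$ error controlled by Theorem \ref{thm:main1}; any genuine strengthening of the out–of–cone bound for $\W_{\rm{Leray}}$ beyond $o(1)$ would require improving the stability estimate itself (for instance through the higher–accuracy corrector of Theorem \ref{thm:main2}) rather than the localization step. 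The only secondary technical points, namely the time–uniformity of the concentration and the inclusion of all four constituents of $\W^{\rm{app}}$ in $\mathcal C$, are settled by the Schwartz–tail argument indicated above.
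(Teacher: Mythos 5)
Your proposal coincides with the paper's own (very terse) argument: the paper disposes of Theorem \ref{thm:main3} in one sentence, by observing that every constituent of $\W^{\rm{app}}$ shares the localization of Lemma \ref{lem:loc} and then letting the stability estimate of Theorem \ref{thm:main1} transport this to $\W_{\rm{Leray}}$ on the logarithmic time window --- exactly your split into a structural term and a dynamical term. The details you supply for the structural term (running the Schwartz--tail argument of Lemma \ref{lem:loc} also for $\W_{\rm{II}}$ and $\W_{\rm{MF}}$, which are built from the same amplitude $\widehat\Psi_{\eps,\sigma}$, and checking that the oscillatory phase $e^{-i\Omega_{k,m}t}$ does not affect the spatial concentration) are correct and fill in what the paper leaves implicit.

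The obstacle you flag at the end is genuine, and it is present in the paper's argument as well: the stability estimate \eqref{eq:stabestimate} controls $\|\W_{\rm{Leray}}(t)-\W^{\rm{app}}(t)\|_{L^2}$ only by $o(1)$ on the stated time-scale (and by a fixed positive power of $\eps$ even at $t=O(1)$, since the exponent $\delta\eps^{-1/2}\sigma^{-2/3}$ is of order one), never by $O(\eps^\infty)$. The triangle inequality therefore yields $\|\W_{\rm{Leray}}(t)\|_{L^2(\R^2_+\backslash\mathcal C)}=O(\eps^\infty)+o(1)=o(1)$, and the clean $O(\eps^\infty)$ bound in the statement is honestly attributable only to the approximate profile, the Leray solution being localized modulo the stability error. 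No finite-speed-of-propagation or exterior-energy argument is available to upgrade this (the Leray projector and the dissipation are nonlocal), so, as you say, any genuine strengthening would have to come from improving the stability estimate rather than the localization step. Your proof is as complete as the paper's, and your reading of the scale mismatch is the correct one.
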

 
It is interesting to notice that the above cones $\mathcal C_+$ and $\mathcal{C}_-$ correspond to the places where the \emph{upslope reflection} and the \emph{downslope reflection}, introduced in \cite{DY1999}, take place.\\

 \begin{remark}[Time-scale of validity of the higher-order approximate solution $\widetilde W^{\rm{app}}$]
 We would like to provide some remarks about the improved approximate solution $\widetilde W^{\rm{app}}$, provided by Theorem \ref{thm:main2}. Its validity holds within a certain time-scale.
The time-scale of consistency alone is entirely determined by the possible appearance of \emph{secular growths}. 
More precisely, notice that by definition of \emph{boundary layers}, the leading part of the approximate solution (far from the boundary) is given by  $\W^{\rm{app}}= \W_{\rm{inc}} + \W_{\rm{II}}+ \W_{\rm{MF}}$, where $\W_{\rm{inc}}$ has time oscillations which are localized around the frequencies $\pm \omega_0$,  and for instance $\W_{\rm{II}}$ oscillates around $\pm 2\omega_0$. Thus in the nonlinear contribution, one has for instance the term $\delta Q(\W_{\rm{inc}} + \W^{\rm{II}}, \W_{\rm{inc}} + \W_{\rm{II}})$. Since the linear part (the linear oscillatory part, without dissipation) $\mathcal{L}_\eps$ of the operator of system \eqref{eq:system} in the scaling of the boundary layers contains oscillations $\pm \omega_0$, then $e^{\mathcal{L}_\eps t} Q(\W_{\rm{inc}}, \W_{\rm{II}})$ may give rise to linear time growths, which are usually called \emph{secural growths}, see \cite{DY1999}. In particular, it might produce an error $\widetilde R_{\rm{app}} =O( {\delta^2 \eps^{-\frac 13} }{\sigma}^{-\frac 52 } t)$ in $L^2$, so that consistency would hold within a time-scale of the order $T_c=o(\delta^{-2}\eps^\frac 13  \sigma^{\frac 52})$ (see the end of the proof of Corollary \ref{cor:nonlin}). 
 However, we will show in the following that no resonance (and then no secular growth) is generated by the terms $e^{\mathcal{L}_\eps t} Q(\W_{\rm{inc}}, \W_{\rm{II}}), \, e^{\mathcal{L}_\eps t} Q( \W_{\rm{II}}, \W_{\rm{inc}})$. This is due to some cancellations related to the structure of the convection term for divergence-free vector fields (which satisfies the Jacobi identity, see \cite{DY1999, Akylas}). Then, the estimated time of consistency of the improved approximate solution $\widetilde \W^{\rm{app}}$ is much larger than the above $T_c$ (if $\delta=O(\eps^\frac 12 \sigma^\frac 23)$), as stated in Theorem \ref{thm:main2}.
\end{remark}

\begin{remark}[Stability of the higher-order approximate solution $\widetilde W^{\rm{app}}$]
Theorem \ref{thm:main2} states that the higher-order approximate solution $\widetilde W^{\rm{app}}$ is \emph{consistent}. In fact, consistency is what we prove in Section \ref{ltc}. Notice however that it can be proved that \emph{Lyapunov stability} (and $L^2$ closeness to the Leray solution) remains valid for the improved approximation $\widetilde W^{\rm{app}}$, by simply applying the machinery of {Step 2} of Section \ref{sec:31} (lifting the boundary conditions) to each further corrector of the higher order approximation. We will not fully develop this last point in this paper for the sake of readibility, but we are rather confident that this argument allows to prove that  $\widetilde W^{\rm{app}}$ is still stable within the time-scale $T_{\rm{log}}^\delta$ in \eqref{eq:time} with very small (improved) remainder term $\widetilde R^{\rm{app}}$, whose $L^2$ norm is given by Theorem \ref{thm:main2}.
\end{remark}

\subsection*{Plan of the paper}
The paper is organized as follow. In Section \ref{linvis}, we provide a boundary layer analysis of the linear part of system \eqref{eq:system} and we construct an (almost) exact approximate solution to the linear problem. Section \ref{nonlin} is devoted to the  weakly nonlinear system \eqref{eq:system}, for which we construct an approximate solution that is constituted by the linear solution of Section \ref{linvis} and further terms originating from nonlinear interactions. Section \ref{nonlin} contains also the proof of Theorem \ref{thm:main1} (i) in Corollary \ref{cor:nonlin}.
Finally, in Section \ref{ltc}, we show how the approximate solution can be improved and extended at least to the next two orders of the asymptotic expansion with a very small error in $L^2$ prove Theorem \ref{thm:main2} in Section \ref{sec:proof2}.
\section{Linear analysis}\label{linvis}
In this section, we solve the \emph{linear} part of system \eqref{eq:system} (with $\delta=0$) by ansatz given by Definition \ref{def:inc-beam}. In this definition, the function $\widehat{\Psi}(\kk, \theta)$ provides different localizations in the Fourier variables or, in other words, different regimes of the parameter 
\begin{align}\label{def_zeta}
\zeta:=\omega^2-\sin^2\gamma.
\end{align}
The regime $\zeta \approx \eps^{\frac 13}$ (i.e. $\omega \sim \pm \sin \gamma$) is called \emph{near-critical}, according to \cite{DY1999, BDSR19} and will be studied in Section \ref{subsec:31}.
Other regimes where $0<c \le |\zeta|$, $c$ independent of $\eps$, will be treated in Section \ref{subsec:33}.

\subsection{Linear analysis in the near-critical regime $\zeta \approx \eps^{\frac 13}, \, \omega \sim \pm \sin \gamma $. }\label{subsec:31}
We provide the following result, where we show that one can construct \emph{linear} boundary layer beam waves (\blbw) to balance the contribution of the incident wave beam $\W_{\rm{inc}}^0$ (in Definition \ref{def:inc-beam}, (X)) on the boundary $y=0$.

\begin{proposition}[Critical reflection for a beam wave (\bw) ]\label{prop:BL}
Let $\nu_0>0, \kappa_0>0$ be such that $\nu=\nu_0 \eps, \, \kappa=\kappa_0 \eps$ for any $\eps>0$. Let Assumption \ref{ass} be fulfilled. 
There exists a function $\W^0=(u^0, w^0, b^0)^T$ and a remainder
\begin{align}
\de_t u^0 - b^0 \sin \gamma + \de_x p^0 -  \nu_0 \eps  \Delta u^0 &=r^0_u,\notag\\
\de_t w^0 - b^0 \cos \gamma + \de_y p^0 -   \nu_0 \eps \Delta w^0 & =r^0_w,\notag\\
\de_t b^0 + u^0\sin \gamma  + w^0 \cos \gamma -  \kappa_0 \eps  \Delta b^0 & = r^0_b,\notag\\
\de_x u^0 + \de_y w^0&=0,\notag\\
u^0|_{y=0}=w^0|_{y=0}=\de_y b^0|_{y=0}&=0,
\label{eq:linsystem}
\end{align}
with
$$\|(r^0_u, r^0_w, r^0_b)\|_{L^2(\R^2_+)}=O(\eps \sigma^{-2}).$$
The function $\W^0$ reads
\begin{align*}
\W^0=\W_{\rm{inc}}^0 + \W_{\rm{BL}}^0,  
\end{align*}
where $\W_{\rm{inc}}^0$ is the incident \bw and $\W_{\rm{BL}}^0$ is a \blbw, as both defined in Definition \ref{def:inc-beam}.
More precisely,
$$\W^0_{\rm{BL}}:=\W^0_{\rm{BL}, \eps^{1/3}}+\W^0_{\rm{BL}, \eps^{1/2}},$$
where 
\begin{itemize}
\item  $\W^0_{\rm{BL}, \eps^{1/3}}=(u_{\rm{BL}, \eps^{1/3}}, w_{\rm{BL}, \eps^{1/3}}, b_{\rm{BL}, \eps^{1/3}})^T$ is a \blbw of order $(\frac 13, \frac 13, -\frac 13, -\frac 23)$, as in Definition \ref{def:inc-beam}, \text{(VI)};
\item $\W^0_{\rm{BL}, \eps^{1/2}}=(u_{\rm{BL}, \eps^{1/2}}, w_{\rm{BL}, \eps^{1/2}}, b_{\rm{BL}, \eps^{1/2}})$ is a \blbw of order $(\frac 12, 0, - \frac 16, -\frac 13)$. 
\end{itemize}
\end{proposition}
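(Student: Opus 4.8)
The plan is to treat $\W_{\rm{inc}}^0$ as a prescribed interior solution and to build $\W_{\rm{BL}}^0$ as a superposition of exponentially decaying modes whose trace at $y=0$ exactly cancels that of $\W_{\rm{inc}}^0$, while adding no interior residual larger than $O(\eps\sigma^{-2})$. First I would confirm that the incident beam already solves the bulk of \eqref{eq:linsystem} to the required accuracy: since $\W_{\rm{inc}}^0$ is assembled from the eigenvector $X_{k,m}$ in \eqref{eq:eigenvector} and the dispersion relation \eqref{eq:omega}, it satisfies the \emph{inviscid} linear system exactly, and the only interior error is the dissipative term $-\nu_0\eps\Delta$ (and $-\kappa_0\eps\Delta$). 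Because each of $\de_x,\de_y$ raises the frequency order of a beam by one (Lemma \ref{estbbl}\ref{xyb}), $\Delta\W_{\rm{inc}}^0$ has order $(0,2)$, so the interior residual is $O(\eps\sigma^{-2})$ in $L^2$ by Lemma \ref{estbbl} — precisely the target. It therefore remains only to annihilate the boundary trace $(u^0,w^0,\de_y b^0)|_{y=0}$.

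Next I would derive the boundary-layer characteristic relation. Inserting the single mode $X\,e^{-i\omega t+ikx-\L y}$ into the linear part of \eqref{eq:system}, using incompressibility to write $w=(ik/\L)u$, eliminating the pressure by combining the two momentum equations so that the $p$-terms cancel, and then eliminating the buoyancy, I obtain, with the shorthand $D:=\L^2-k^2$ and $A:=\L\sin\gamma+ik\cos\gamma$, the constraint
\begin{equation*}
\omega^2 D-A^2-i\eps(\nu_0+\kappa_0)\,\omega D^2-\nu_0\kappa_0\eps^2 D^3=0.
\end{equation*}
Its inviscid part $\omega^2 D=A^2$ is equivalent, through \eqref{def_zeta}, to $\zeta\L^2-ik\sin(2\gamma)\L+k^2(\cos 2\gamma-\zeta)=0$. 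The companion eigenvector $X_{k,\L}$ (the object \eqref{eq:eigen-BLs}) is then read off from $w=(ik/\L)u$, the buoyancy relation $b=A\,u/\big(\L(i\omega+\kappa D)\big)$, and the pressure recovered from the first momentum equation.

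The heart of the argument is the singular-perturbation analysis of the roots in the near-critical window $\zeta\approx\eps^{1/3}$. Since the leading coefficient $\zeta$ of the inviscid quadratic degenerates, one root escapes to the scale $\L\sim\eps^{-1/3}$: rescaling $\L=\eps^{-1/3}\hat\L$ (and $\zeta=\eps^{1/3}\hat\zeta$) while retaining the $i\eps\omega D^2$ term at the same order produces the cubic $i(\nu_0+\kappa_0)\omega\hat\L^3-\hat\zeta\hat\L+ik\sin(2\gamma)=0$, whose decaying roots give the $\eps^{1/3}$ layer with $\L\sim\eps^{-1/3}\ell(\theta)\kk^{1/3}$ (order $(\tfrac13,\tfrac13,\cdot,\cdot)$). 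Rescaling instead $\L=\eps^{-1/2}\tilde\L$ balances $\omega^2 D$ against the two dissipative terms and yields $\tilde\L^2=-i(\nu_0+\kappa_0)\omega/(\nu_0\kappa_0)$, i.e. the $\eps^{1/2}$ layer with $\L\sim\eps^{-1/2}\ell(\theta)$ (order $(\tfrac12,0,\cdot,\cdot)$). Among all roots I keep only those with $\Re\L>0$; this selection is exactly where the sign discussion of Remark \ref{rmk:equality} ($k$ and $\omega$ share their sign under the incidence condition) is invoked to guarantee genuine decay throughout $\mathrm{supp}\,\widehat\Psi_{\eps,\sigma}$. The validity of these expansions uniformly up to $\kk\sim\sigma^{-1}$ is what forces the constraint on $\sigma$ in Assumption \ref{ass}.

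Finally I would impose the three boundary conditions mode by mode in $(k,\theta)$, choosing the amplitudes of the retained decaying modes so that $\W_{\rm{inc}}^0+\W_{\rm{BL}}^0$ kills $u$, $w$ and $\de_y b$ at $y=0$. This is a small linear system whose uniform solvability (invertibility of the matching matrix with entries of the expected $\eps$-order) must be verified; its solution fixes the amplitude orders $a_{\nn,\ta}=O(\eps^{\nn}\kk^{\ta})$, giving $(\nn,\ta)=(-\tfrac13,-\tfrac23)$ for the $\eps^{1/3}$ layer and $(-\tfrac16,-\tfrac13)$ for the $\eps^{1/2}$ layer (the negative $\eps$-powers encoding the focusing enhancement). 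Reassembling against $\widehat\Psi_{\eps,\sigma}$ and integrating in $\kk$ produces $\W_{\rm{BL}}^0=\W_{\rm{BL},\eps^{1/3}}^0+\W_{\rm{BL},\eps^{1/2}}^0$ of the stated orders, and Lemma \ref{estbbl} then collects the interior error $O(\eps\sigma^{-2})$ from the incident wave together with the sub-leading errors from truncating the root/eigenvector expansions and from the $k$-dependence of $\L(k)$. I expect the main obstacle to be precisely this root analysis together with the mode count and matrix invertibility: the degeneration at $\zeta\approx\eps^{1/3}$ drives one exponent to diverge, and recovering its small but positive real part — the actual decay rate of the focused reflected beam turned boundary layer — requires keeping the dissipative terms at exactly the right order and controlling the expansion uniformly over the $\kk$-support, while one must simultaneously check that the number of $\Re\L>0$ modes matches the three boundary conditions so that a formal ansatz becomes a genuine $L^2$ solution with the advertised remainder.
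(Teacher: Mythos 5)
Your proposal follows essentially the same route as the paper: the incident beam solves the inviscid linear system exactly so the only interior residual is $O(\eps\|\Delta\W^0_{\rm inc}\|_{L^2})=O(\eps\sigma^{-2})$, the boundary layers come from the three $\Re\L>0$ roots of the characteristic polynomial \eqref{eq:pol} identified by the rescalings $\L\sim\eps^{-1/3}\kk^{1/3}$ and $\L\sim\eps^{-1/2}$ (with the quantitative root-localization lemma of the appendix playing the role of your "uniform control over the $\kk$-support"), and the amplitude orders $(-\tfrac13,-\tfrac23)$ and $(-\tfrac16,-\tfrac13)$ follow from inverting the $3\times3$ matching matrix at $y=0$. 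The only cosmetic difference is that the paper uses the \emph{exact} roots $\L_j(k)$ so the boundary-layer part contributes no truncation error at all, whereas you budget a sub-leading error for truncated expansions; this does not affect the stated remainder.
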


{\begin{corollary}(Sizes of the components of $\W^0$)\label{rmk:BLsizes}
Let $\W^0$ be the approximate solution to the linear part of system \eqref{eq:system} with exact boundary conditions \eqref{eq:cond-BL}. The following estimates hold provided that $\mu < \frac 12$ :
\begin{align}\label{eq:sizesBL}
\|\W_{\rm{inc}}\|_{L^2(\R_+^2)} &= {O}(1); \quad  \| \W_{\rm{BL}, \eps^{ 1/3} }^0\|_{L^2(\R_+^2)} = {O}( \sigma^\frac 16); \quad \| \W_{\rm{BL}, \eps^{ 1/2} }^0\|_{L^2(\R_+^2)} = {O}(\eps^{\frac 14} \sigma^{-\frac 16}); \notag\\
\|\W_{\rm{inc}}\|_{L^\infty(\R_+^2)} &= {O}(\eps^{\frac 16} \sigma^{-1}); \quad  \| \W_{\rm{BL}, \eps^{ 1/3} }^0\|_{L^\infty(\R_+^2)} = {O}(\sigma^{-\frac 13}\eps^{-\frac 16}); \quad \| \W_{\rm{BL}, \eps^{ 1/2} }^0\|_{L^\infty(\R_+^2)} = {O}(\sigma^{-\frac 23});\notag\\
\|\nabla \W^0\|_{L^\infty(\R_+^2)} & \le \|\nabla \W_{\rm{inc}}^0\|_{L^\infty(\R_+^2)} + \|\nabla \W_{\rm{BL} }^0\|_{L^\infty(\R_+^2)}=O(\eps^{-\frac 12} \sigma^{-\frac 23}).
\end{align}
\end{corollary}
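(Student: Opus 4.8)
The statement is pure bookkeeping on top of Lemma \ref{estbbl} and Proposition \ref{prop:BL}; no new analysis is needed. The plan is: first read off the orders of the three pieces of $\W^0=\W_{\rm{inc}}^0+\W^0_{\rm{BL}, \eps^{1/3}}+\W^0_{\rm{BL}, \eps^{1/2}}$, then insert them into the four norm formulas \eqref{eq:estimatenorm}, and finally, for the gradient, use parts \ref{xyb}--\ref{xybl} of Lemma \ref{estbbl} to shift the orders under $\partial_x,\partial_y$ before applying \eqref{eq:estimatenorm} once more and retaining the largest term.

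By the very normalisation $\sigma\eps^{-1/6}$ and base amplitude $\widehat\Psi_{\eps,\sigma}$ in \eqref{equivalent}, the incident beam $\W_{\rm{inc}}^0$ is a \bw of order $(0,0)$ in the sense of Lemma \ref{estbbl}, while Proposition \ref{prop:BL} records that $\W^0_{\rm{BL}, \eps^{1/3}}$ is a \blbw of order $(1/3,1/3,-1/3,-2/3)$ and $\W^0_{\rm{BL}, \eps^{1/2}}$ one of order $(1/2,0,-1/6,-1/3)$. Plugging $(\nn,\ta)=(0,0)$ into the beam lines of \eqref{eq:estimatenorm} gives $\|\W_{\rm{inc}}^0\|_{L^2}=O(1)$ and $\|\W_{\rm{inc}}^0\|_{L^\infty}=O(\eps^{1/6}\sigma^{-1})$. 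Plugging $(\alpha,\beta,\nn,\ta)=(1/3,1/3,-1/3,-2/3)$ into the boundary-layer lines gives $\|\W^0_{\rm{BL}, \eps^{1/3}}\|_{L^2}=O(\eps^{1/6-1/3+1/6}\sigma^{2/3-1/2})=O(\sigma^{1/6})$ and $\|\W^0_{\rm{BL}, \eps^{1/3}}\|_{L^\infty}=O(\eps^{-1/6}\sigma^{-1/3})$, while $(1/2,0,-1/6,-1/3)$ yields $\|\W^0_{\rm{BL}, \eps^{1/2}}\|_{L^2}=O(\eps^{1/4}\sigma^{-1/6})$ and $\|\W^0_{\rm{BL}, \eps^{1/2}}\|_{L^\infty}=O(\sigma^{-2/3})$. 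This is exactly the first two lines of \eqref{eq:sizesBL}.

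For the gradient I would differentiate term by term. By part \ref{xyb}, $\partial_x\W_{\rm{inc}}^0$ and $\partial_y\W_{\rm{inc}}^0$ are beams of order $(0,1)$, so $\|\nabla\W_{\rm{inc}}^0\|_{L^\infty}=O(\eps^{1/6}\sigma^{-2})$. By part \ref{xybl}, $\partial_x$ preserves $(\alpha,\beta)$ and raises $\ta$ by one, whereas $\partial_y$ lowers $\nn$ by $\alpha$ and raises $\ta$ by $\beta$ (it brings down the factor $\L\sim\eps^{-\alpha}\kk^\beta$). Hence $\partial_y\W^0_{\rm{BL}, \eps^{1/3}}$ has order $(1/3,1/3,-2/3,-1/3)$ and $\partial_y\W^0_{\rm{BL}, \eps^{1/2}}$ order $(1/2,0,-2/3,-1/3)$, both of $L^\infty$ size $O(\eps^{-2/3+1/6}\sigma^{1/3-1})=O(\eps^{-1/2}\sigma^{-2/3})$, while the horizontal derivatives are merely $O(\eps^{-1/6}\sigma^{-4/3})$ and $O(\sigma^{-5/3})$. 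Taking the maximum over all six pieces gives $\|\nabla\W^0\|_{L^\infty}=O(\eps^{-1/2}\sigma^{-2/3})$.

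The only non-mechanical point, and where the hypothesis $\mu<1/2$ enters, is verifying that the vertical boundary-layer derivatives really dominate. Since $\sigma>\eps^\mu$ with $\mu<1/2$ forces $\sigma>\eps^{1/2}$ for $\eps$ small, one checks $\eps^{1/6}\sigma^{-2}\lesssim\eps^{-1/2}\sigma^{-2/3}$ (equivalent to $\sigma\gtrsim\eps^{1/2}$), and likewise that $\eps^{-1/6}\sigma^{-4/3}$ and $\sigma^{-5/3}$ are $o(\eps^{-1/2}\sigma^{-2/3})$; thus the incident-beam gradient and the horizontal boundary-layer derivatives are all subdominant. I expect no genuine obstacle beyond keeping the arithmetic of the four exponents in \eqref{eq:estimatenorm} straight under the order shifts produced by $\partial_x,\partial_y$ and by the $\sigma\eps^{-1/6}$ normalisation.
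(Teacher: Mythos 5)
Your proposal is correct and is essentially the paper's own argument: the paper likewise records the orders $(0,0)$, $(\tfrac13,\tfrac13,-\tfrac13,-\tfrac23)$ and $(\tfrac12,0,-\tfrac16,-\tfrac13)$ for the three pieces of $\W^0$ and declares the estimates a straightforward application of Lemma \ref{estbbl}, merely adding the remark that the eigenvectors $X_{k,m}$ and $X_{k,\L_j}$ are uniformly bounded (which your order bookkeeping implicitly uses). Your explicit treatment of the gradient via parts (i)--(ii) of Lemma \ref{estbbl}, and your identification of $\sigma\gtrsim\eps^{1/2}$ (i.e.\ $\mu<\tfrac12$) as the reason the vertical boundary-layer derivatives dominate, correctly fills in the arithmetic the paper leaves implicit.
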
 
\begin{proof}
Recalling that $X_{k,m}$ in \eqref{eq:eigenvector} and $X_{k, \L_j}$  in \eqref{eq:eigen-BLs} (for $\L_j, j=1,2,3$ provided by Lemma \ref{lem:asym1}) are uniformly bounded and since we know that $\W_{\rm{inc}}$ is a \bw of order $(0,0)$, while $\W^0_{\rm{BL}, \eps^{1/3}}, \W^0_{\rm{BL}, \eps^{1/2}}$ are \blbw of order $(\frac 13, \frac 13, -\frac 13, -\frac 23)$ and $(\frac 12, 0, -\frac 16, -\frac 13)$ respectively, then the results are a straightforward application of Lemma \ref{estbbl}.
\end{proof}}

\begin{remark}
The incident \bw $\W_{\rm{inc}}$ is chosen in such a way that it has automatically $L^2$ norm of $O(1)$. The intuition behind the finite $L^2$ norm of \blbw is the following: because of the beam localization in the physical space, a \blbw is localized in a band of size $\sigma$ of the $(x,y)$ positive half plane. The \blbw $\W_{\rm{BL}, \eps^{1/3}}$ is also localized in a band of size $\eps^{1/3}$ near $y=0$, thanks to the decay in $y$ and the lower bound $|k| \ge \eta$. Then the support  of the \blbw $\W_{\rm{BL}, \eps^{1/3}}$ is of size $(\sigma \times \eps^{1/3})$. Thus
\begin{align*}
\|\W_{\rm{BL}, \eps^{1/3}}\|_{L^2} \le \|\W_{\rm{BL}, \eps^{1/3}}\|_{L^\infty} (\sigma \times \eps^{\frac 13})^\frac 12 = O (\sigma^{\frac 16}).
\end{align*} 
\end{remark}

\begin{remark}
Notice that the solution $\W^0$ provided by Proposition \ref{prop:BL} solves the linear system \eqref{eq:linsystem} with boundary conditions \eqref{eq:cond-BL} without any error on the boundary $y=0$. However, $\W^0$ is an \emph{almost} exact linear solution as an error of size $\eps\sigma^{-2}$ in $L^2$ is due to the purely oscillatory nature of the incident \bw $\W_{\rm{inc}}^0$, which is only an \emph{approximate} solution to the linear viscous and diffusive system \eqref{eq:linsystem} ($r^0$ in Proposition \ref{prop:BL}).
\end{remark}


\subsection{Proof of Proposition \ref{prop:BL}}
The first step to construct $\W^0$ is to determine the values of $(\alpha, \beta, \nn, \ta)$ characterizing  \blbw in \eqref{def:BL-general} (as in Definition \ref{def:inc-beam}) which solve the linear system \eqref{eq:linsystem}.
Let us consider the ansatz in \eqref{def:BL-general}, where we set
\begin{align}
\R^4 \ni X_{\lambda, k}&=\begin{pmatrix}
U_\L\\
W_\L\\
B_\L\\
P_\L
\end{pmatrix}, \quad \text{with \;} U_\L=U_\L(k), \, W_\L=W_\L(k), \,B_\L=B_\L(k), \, P_\L=P_\L(k).
\end{align}
Inserting this ansatz inside system \eqref{eq:linsystem}, we obtain the corresponding (algebraic) linear system

\begin{align}\label{eq:algebra}
A_\eps(\omega, \kappa_0, \nu_0, k, \L) \begin{pmatrix}
U_\L\\
W_\L\\
B_\L\\
P_\L
\end{pmatrix}=0, \quad \text{where \;} A_\eps(\omega, \kappa_0, \nu_0, k, \L) \in \, \text{space of matrices \;} \mathcal{M}^{4\times 4}.
\end{align}
Then, we look for vectors $X_{k, \lambda} \in \ker (A_\eps(\omega, \kappa_0, \nu_0, k, \L))$, with the restriction that $\lambda=\lambda(k)$ is such that $\rm{Re}(\lambda)>0$, as in Definition \ref{def:inc-beam}. To satisfy $\ker A_\eps(\omega, \kappa_0, \nu_0, k, \L) \neq \{0\}$, we impose $\det A_\eps(\omega, \kappa_0, \nu_0, k, \L)=0$. This amounts at finding the roots in $\L$ of the following 
characteristic polynomial associated with $A_\eps(\omega, \kappa_0, \nu_0, k, \L)$:
\begin{align}\label{eq:pol}
\mathcal{P}(\L)&:=-\eps^{2} \kappa_0 \nu_0  \lambda^6 + (-i  \eps \omega (\kappa_0+\nu_0)+3\nu_0\kappa_0 \eps^{2}k^2)\L^4  + (\zeta  +2i\omega (\kappa_0+\nu_0)\eps k^2-3\nu_0\kappa_0 \eps^{2}k^4) \L^2\notag\\
&\quad  - 2i \L k \sin\gamma \cos \gamma + k^2 (\cos^2\gamma-\omega^2-i\eps\omega (\nu_0+\kappa_0) k^2+\nu_0\kappa_0 \eps^{2}k^4).
\end{align}
Furthermore, the vector $X_{k, \L_j} \in \ker (A_\eps(\omega, \kappa_0, \nu_0, k, \lambda_j)$ reads
\begin{align}\label{eq:eigen-BLs}
X_{k, \L_j}&= \begin{pmatrix}
1 \\
 \frac{ik}{\L_j} \\
\frac{\L_j \sin \gamma + i k \cos \gamma}{\L_j (i\omega - \eps \kappa_0 (k^2-\L_j^2))} \\
\frac{1}{ik} [i\omega + \eps \nu_0 (\L^2-k^2) + \sin\gamma \frac{\sin \gamma+ik\L^{-1} \cos \gamma}{i\omega+\eps \kappa_0 (\L^2-k^2)}]
\end{pmatrix}.
\end{align}

We shall rely on the following intermediate result.

\begin{lemma}[Asymptotics of the roots in the critical case $\zeta \approx \eps^\frac 13$ for $\omega \sim \pm \sin \gamma$]\label{lem:asym}
Let $\nu_0>0, \kappa_0>0$ be such that $\nu=\eps \nu_0, \kappa=\eps \kappa_0$ for any $0 < \eps \ll 1$. Let $c_0 \eps^\frac 13 < |\omega \pm \sin \gamma| \le C_0 \eps^\frac 13$ and $\zeta = \zeta_0 \eps^\frac 13$ for some universal constants $c_0>0, C_0>0$ and $\zeta_0 \in \R$. 
There exists $\eta_0=\eta_0(\kappa_0, \nu_0, \gamma)>0$ such that, for any $\eta \ge \eta_0$, the characteristic polynomial $\mathcal{P}(\L)$ in \eqref{eq:pol} associated with a \emph{linear} \blbw ansatz as in \eqref{def:BL-general}, i.e. 
the determinant of the matrix $A_\eps(\omega, \kappa_0, \nu_0, k, \L)$ of the linear algebraic system \eqref{eq:algebra}, admits exactly three roots $\L_j=\L_j(k), j=1,2,3,$ with $\rm{Re}(\L_j)>0$. Their asymptotic expansions in terms of the singular parameter $0<\eps \ll 1$ is as follows:
\begin{align*}
\L_1& = \eps^{-\frac 13}{\kk^\frac 13} \tilde \ell_1(\theta)+ O(\kk) , \quad \L_2= \eps^{-\frac 13}{\kk^\frac 13}\tilde \ell_2(\theta)+ O(\kk), \quad
 \L_3= \eps^{-\frac 12} \tilde \ell_3 (\theta)+O(\eps^{-\frac 16}), 
\end{align*}
for some functions $\tilde \ell_i(\theta)  \in C^\infty (\rm{supp} \widehat{\Psi}_{\eps, \sigma} , \mathbb{C}), \; i \in \{1,2,3\}.$\\
More precisely, if $\eta \le  \kk \le \sigma^{-1}$ with $\sigma \ge \eps^{\mu}$ and $\mu>0$ small enough (Assumption \ref{ass}), then 
\begin{align}\label{eq:eigenv}
\L_1& = \eps^{-\frac 13}{\kk^\frac 13} \tilde \ell_1(\theta) (1+O(\eps^{\frac13 - \frac 23 \mu})), \quad \L_2= \eps^{-\frac 13} \kk^\frac 13 \tilde \ell_2(\theta)(1+O(\eps^{\frac 13- \frac 23 \mu})), \quad
 \L_3= \eps^{-\frac 12} \tilde \ell_3 (\theta)(1+O(\eps^{\frac 16})).
\end{align}
Moreover, there exists an exact solution to the linear part of system \eqref{eq:system}, with boundary conditions 
\be\label{bc2}
\begin{pmatrix}
u^0_{|y=0}\\w^0_{|y=0}\\\de_y b^0_{|y=0}
\end{pmatrix}=\W_{\rm{inc}}^0|_{y=0}
\ee
where $\W_{\rm{inc}}^0$ is given in Definition \ref{def:inc-beam}, (X). The  solution is 
\begin{align}\label{eq:linear-sol}
\W_{\rm{BL}}^0=\W_{\rm{BL}, \eps^{1/3}}^{\L_1}+\W_{\rm{BL}, \eps^{1/3}}^{\L_2}+\W_{\rm{BL}, \eps^{1/2}}^{\L_3},
\end{align}
where $\W_{\rm{BL}, \eps^{\alpha}}^{\L_j}, \, j=1,2,3$ is a linear \blbw as in \eqref{def:BL-general}, where $X_{k, \L_j}$  is given by \eqref{eq:eigen-BLs}.
More precisely,
$\W_{\rm{BL}, \eps^{1/3}}^{\L_1}, \W_{\rm{BL}, \eps^{1/3}}^{\L_2}$ are \blbw of order $(\frac 13, \frac 13, -\frac 13, -\frac 23)$, while $\W_{\rm{BL}, \eps^{1/2}}^{\L_3}$ is a \blbw of order $(\frac 12, 0, -\frac 16, - \frac 13)$.
\end{lemma}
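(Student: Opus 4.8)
The plan is to treat \eqref{eq:pol} as a singular perturbation problem for the six roots of the sextic $\mathcal P(\L)$ as $\eps\to 0$, uniformly for $k$ in the window $\eta\le\kk\le\sigma^{-1}\le\eps^{-\mu}$, and to isolate the three roots with $\Re(\L)>0$. The first step is to read off the admissible scalings from the Newton polygon of $\mathcal P$ with respect to $\eps$: using $\zeta\approx\eps^{1/3}$ and $\cos^2\gamma-\omega^2=\cos2\gamma+O(\eps^{1/3})$, the $\eps$-valuations of the coefficients of $\L^6,\L^4,\L^2,\L^1$ are $2,1,\tfrac13,0$, and that of $\L^0$ is $0$ for $\gamma\ne\pi/4$ (the borderline $\gamma=\pi/4$, where $\cos^2\gamma-\omega^2=O(\eps^{1/3})$, is handled by the same method with a slightly modified polygon). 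The lower convex hull then has edges of slopes $0,\tfrac13,\tfrac12$ and widths $1,3,2$, predicting (and the dominant balances below confirm) one root of size $O(\kk)$, three roots of size $\eps^{-1/3}\kk^{1/3}$ and two of size $\eps^{-1/2}$, exhausting the six roots.

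The second step is to compute each leading coefficient by dominant balance and to determine the sign of its real part. Setting $\L=\eps^{-1/2}L$, the $\L^6/\L^4$ balance gives $\kappa_0\nu_0 L^2+i\omega(\kappa_0+\nu_0)=0$, and exactly one square root has positive real part; it defines $\L_3$ and $\tilde\ell_3(\theta)$. Setting $\L=\eps^{-1/3}L$, the $\L^4/\L^1$ balance — the term $\zeta\L^2$ being genuinely subdominant once $\kk$ exceeds a large constant, which is precisely why $\eta\ge\eta_0$ is imposed — gives $\omega(\kappa_0+\nu_0)L^3+2k\sin\gamma\cos\gamma=0$. Since $k$ and $\omega$ carry the same sign by Remark \ref{rmk:equality}, the quantity $-2k\sin\gamma\cos\gamma/(\omega(\kappa_0+\nu_0))$ is a negative real proportional to $\kk$, so its three cube roots lie at arguments $\pm\pi/3$ and $\pi$: exactly two, $\L_1,\L_2$, have positive real part (fixing $\tilde\ell_{1,2}(\theta)$ and the $\kk^{1/3}$ prefactor) and one has negative real part. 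The only delicate mode is the remaining root of size $O(\kk)$, whose leading balance $a_1\L+a_0=0$ yields the purely imaginary value $\L_0=-ik(\cos^2\gamma-\omega^2)/(2\sin\gamma\cos\gamma)$; carrying the expansion further, the corrections at orders $\eps^{1/3}$ and $\eps^{2/3}$ turn out to be purely imaginary as well, and the first contribution to $\Re(\L)$ appears at order $\eps$ and is strictly negative because $\omega k>0$. Hence this root is inadmissible, and exactly three of the six roots have positive real part.

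The third step is to upgrade the leading balances to the quantitative expansions \eqref{eq:eigenv}, uniformly in $\eta\le\kk\le\sigma^{-1}$, by an implicit-function (or Rouch\'e) argument around each simple leading root. For $\L_{1,2}$ the largest neglected term is the sextic $\eps^2\kappa_0\nu_0\L^6$ measured against $\eps\omega(\kappa_0+\nu_0)\L^4$, giving a relative error $\eps\L^2\sim\eps^{1/3}\kk^{2/3}\le\eps^{1/3-2\mu/3}$ by $\kk\le\eps^{-\mu}$, which is $o(1)$ exactly for $\mu<\tfrac12$; this reproduces the stated $O(\eps^{1/3-2\mu/3})$, and an analogous count gives the error for $\L_3$. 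Smoothness of the $\tilde\ell_i$ on $\mathrm{supp}\,\widehat\Psi_{\eps,\sigma}$ follows since they are simple roots of reduced polynomials depending smoothly on $\theta$ through $\omega(\theta)$.

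The final step is to assemble \eqref{eq:linear-sol}. For each fixed $(k,\theta)$ the three admissible modes carry the eigenvectors $X_{k,\L_j}$ of \eqref{eq:eigen-BLs}, and, because beams and boundary layers share the same integral representation at $y=0$ (as noted after Definition \ref{def:inc-beam}), the three conditions \eqref{bc2} reduce mode-by-mode to a $3\times3$ linear system for the amplitudes $A_1(k),A_2(k),A_3(k)$, with matrix formed from the rows $1$, $ik/\L_j$ and $-\L_j B_{\L_j}$. The well-separated leading asymptotics (together with $i\omega+\eps\kappa_0\L_3^2\to-i\omega\kappa_0/\nu_0\ne0$) make this matrix invertible for small $\eps$, and solving it yields amplitudes whose $\eps$- and $\kk$-scalings identify the orders $(\tfrac13,\tfrac13,-\tfrac13,-\tfrac23)$ for $\L_1,\L_2$ and $(\tfrac12,0,-\tfrac16,-\tfrac13)$ for $\L_3$; in particular the $w$-condition $\sum_j A_j\,ik/\L_j=O(1)$ with $\L_{1,2}\sim\eps^{-1/3}$ forces the $\L_{1,2}$ amplitudes up to size $\eps^{-1/3}$, which is exactly the near-critical focusing. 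I expect the main obstacle to be the combination of pinning down the exact count of positive-real-part roots — in particular resolving the degenerate $O(\kk)$ root through its next-order real part — and making every estimate uniform over the wide window $\eta\le\kk\le\eps^{-\mu}$, since it is this uniformity, rather than the pointwise-in-$k$ balances, that constrains $\mu$ and feeds the $L^2$ bounds of Lemma \ref{estbbl}.
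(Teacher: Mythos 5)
Your proposal is correct and follows essentially the same strategy as the paper: scaling ans\"atze for $\L$, reduction to dominant-balance equations ($\omega(\kappa_0+\nu_0)\ell^3+2\sin\gamma\cos\gamma=0$ at scale $\eps^{-1/3}\kk^{1/3}$, $\kappa_0\nu_0\ell^2+i\omega(\kappa_0+\nu_0)=0$ at scale $\eps^{-1/2}$), a quantitative perturbation step to upgrade approximate roots to true roots, and a $3\times3$ linear system for the boundary amplitudes whose inversion produces the stated orders. The differences are worth recording. First, where you invoke a generic Rouch\'e/implicit-function argument, the paper uses its explicit quantitative Newton lemma (Lemma \ref{lem:roots}) and verifies its hypotheses with explicit constants; this is precisely where the threshold $\eta_0(\kappa_0,\nu_0,\gamma)$ is computed, so if you want the same explicit $\eta_0$ you would need to make your Rouch\'e radii quantitative in the same way. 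Second, your Newton-polygon bookkeeping is \emph{more} complete than the paper's proof: the paper constructs the three roots with $\Re(\L)>0$ but does not explicitly dispose of the remaining three (the third cube root at argument $\pi$, the second square root, and the $O(\kk)$ root), whereas you account for all six and, in particular, observe that the surviving $O(\kk)$ root is purely imaginary to leading orders with first real contribution of size $\eps\kk^3$ and negative sign (because $\omega k>0$) --- this is the step actually needed to justify the word ``exactly'' in the statement, and it is consistent with the sign flip relative to the non-critical regime of Lemma \ref{lem:asym3}, where the analogous root has \emph{positive} real part and is retained as a boundary layer. You should, however, supply the short computation behind that sign claim (the relevant imaginary part of $\mathcal P$ at the purely imaginary leading root is $-i\eps\omega(\kappa_0+\nu_0)(\L_0^2-k^2)^2$, divided by $\mathcal P'(\L_0)\sim-2ik\sin\gamma\cos\gamma$), since it is asserted rather than proved in your write-up, and likewise the final amplitude scalings $a\sim\eps^{-1/3}\kk^{-2/3}$ and $\eps^{-1/6}\kk^{-1/3}$ require the explicit leading form of the inverse matrix (as in the paper's computation of $\det M$ and $M^{-1}$) rather than invertibility alone.
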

\begin{proof}
We want to apply Lemma \ref{lem:roots} to prove that there exist roots $\L$ of $\mathcal{P}(\L)$ with singular (leading order) asymptotic $\L \sim \eps^{-\alpha} \kk^\beta \ell $ where $\ell=\ell(\theta), \alpha>0, \beta \in \R$, i.e. \blbw as in $(\rm{II})$ of Definition \ref{def:inc-beam}. To this end, we first have to identify the possibile values of $\alpha>0$. 
In other words, plugging the ansatz $\L \sim \eps^{-\alpha} \kk^\beta \ell$ inside the expression of $\mathcal{P}(\L)$, one has to find the values of $\alpha>0$ for which the leading part of  $\mathcal{P}(\eps^{-\alpha} \kk^\beta \ell)$ is represented at least by two different monomials, so providing a reduced (algebraic) equation with non-trivial solutions. 
We obtain the following.
\begin{itemize}
\item Plugging the ansatz $\L \sim \eps^{-\frac 13} \kk^{1/3} \ell$ inside $\hP(\L)$, and using that $\kk \le \sigma^{-1} \le \eps^{-\mu}$ (Assumption \ref{ass}), the leading order part of the polynomial reads 
\begin{align}\label{eq:eps2BL}
\omega (\kappa_0+\nu_0) \ell^3  + 2  \sin \gamma \cos \gamma=0.
\end{align}
Since $k$ and $\omega$ have always the same sign (see Definition \ref{def:inc-beam} and Remark \ref{rmk:equality}), then the above equation admits two roots
$$\ell_{1}=\ell_{1}(\theta)=\left(\frac{2\sin \gamma \cos \gamma}{\omega (\kappa_0+\nu_0)}\right)^{1/3} \exp(i\pi/3), \quad \ell_{2}=\ell_{2}(\theta)=\left(\frac{2\sin \gamma \cos \gamma}{\omega (\kappa_0+\nu_0)}\right)^{1/3} \exp(i5\pi/3)$$
 with strictly positive real part and such that $\ell_j(\theta) \in C^{\infty}(\rm{supp} \widehat{\Psi}_{\eps, \sigma}), \, j=1,2$. Now we want to prove that there exist two roots $\L_j,  \, j \in \{1,2\}$ of the original polynomial $\hP(\L)$ which are $r$-close to $\eps^{-1/3}\kk^{1/3}  \ell_{j}$ with $\ell_{j}$ solutions to \eqref{eq:eps2BL} for any $0 < r < \eps^{-1/3}\kk^{1/3}$. 
We verify that the assumptions of Lemma \ref{lem:roots} are fulfilled: for $\ell_{j}$ roots of \eqref{eq:eps2BL}, one has for $\eps>0$ sufficiently small that \\
\begin{align*}
&\bullet \quad \frac{|\zeta_0|}{2} \frac{\kk^{1/3}}{\eps^{1/3}}\le |\hP(\eps^{-1/3} {\kk}^{1/3} \ell_{j}) | \le 2 |\zeta_0| \frac{\kk^{1/3}}{\eps^{1/3}}; \\
& \bullet \quad |\ell_{j}|  \kk |\sin \gamma \cos \gamma|\le |\hP'(\eps^{-1/3} {\kk}^{1/3} \ell_{j})| \le 4 |\ell_{j}|  \kk |\sin \gamma \cos \gamma|;\\
&\bullet \quad |\hP''(z)| \le 2 |\zeta_0| \eps^{1/3} \; \text{for} \; |z| \le 2r \; \text{where} \;  r \ge \left|\frac{\hP(\eps^{-1/3} {\kk}^{1/3} \ell_{j})}{\hP'(\eps^{-1/3} {\kk}^{1/3} \ell_{j})}\right| \gtrsim \eps^{-1/3} \kk^{-2/3}.
\end{align*}
Therefore, we have that
\begin{align*}
& |\hP(\eps^{-1/3} {\kk}^{1/3} \ell_{j})| \le 2 |\zeta_0| \frac{\kk^{1/3}}{\eps^{1/3}} \le \frac{|\ell_{j}|^2 \kk^2 |\sin \gamma \cos \gamma|^2}{8|\zeta_0| \eps^{1/3}} \le    \frac{|\hP'(\eps^{-1/3} {\kk}^{1/3} \ell_{j})|^2}{4 \sup_{z \le 2 r} |\hP''(z)|}\\
&\quad \Leftrightarrow \quad 16  |\zeta_0|^2 \le |\ell_{j}|^2 \kk^{5/3} |\sin \gamma \cos \gamma|^2 \quad \Leftrightarrow \quad  \kk \ge \left( \sup_{j \in \{1,2\}} \frac{16|\zeta_0|^2}{|\ell_{j}|^2 |\sin\gamma \cos \gamma|^2}  \right)^{3/5} \\
&\quad \Leftrightarrow \quad \kk \ge 	\left( \frac{16 |\zeta_0|^2 (\omega (\kappa_0+\nu_0))^{2/3}}{2^{2/3}|\sin \gamma \cos \gamma|^{5/3}}\right)^{3/5} :=\eta_0,
\end{align*}

where $\eta_0>0$ is in Definition \ref{def:inc-beam}. This way we obtain that the roots $\L_j, \, j\in\{1,2\}$ of the polynomial $\hP(\L)$ admit the following expansion
$$\L_j \sim \eps^{-1/3} \kk^{1/3} \ell_{j}+ \eps^{-1/3} \kk^{-2/3} \ell_{j}', \quad \text{where} \;  \ell_{j} \; \text{solves} \; \eqref{eq:eps2BL}\; \text{and for some} \; \ell_{j}'=\ell_{j}'(\theta).$$

Applying Lemma \ref{lem:roots} once again, we obtain that
$$\L_j \sim \eps^{-1/3} \kk^{1/3} \ell_{j}+ \eps^{-1/3} \kk^{-2/3} \ell_{j}'+ \kk \ell_{j}'',  \; \text{for some} \; \ell_{j}''=\ell_{j}''(\theta),$$

where we recall that $\eta \le \kk \le \eps^{-\mu}$.

\item Plugging the ansatz $ \L_3\sim \eps^{-1/2} \ell $ inside $\hP(\L)$, we obtain that the leading order terms satisfy 
\begin{align}\label{eq:eps3BL}
\kappa_0 \nu_0  \ell^2 + i\omega (\kappa_0+\nu_0) =0.
\end{align}
We denote by 
$$\ell_{3}=\sqrt{\frac{\omega (\kappa_0+\nu_0)}{\kappa_0 \nu_0}} \exp (i7\pi/4)$$ 
the root with strictly positive real part. Once again, we want to apply Lemma \ref{lem:roots} to prove that there exists a root $\L_3 \sim \eps^{-1/2}$ of $\hP(\L)$. In this case we have
\begin{align*}
&\bullet \quad \hP( \eps^{- 1/2} \ell_{3})\sim \eps^{-2/3}, \quad \hP'( \eps^{- 1/2} \ell_{3})\sim \eps^{-1/2} \; \Rightarrow \; r=O(\eps^{-1/6})\\
&\bullet \quad \sup \hP''(z) = O(1), \quad   |z| \le C \eps^{-1/6}. 
\end{align*}
We check then that the assumptions of Lemma \ref{lem:roots} are fulfilled with $r \sim \eps^{-1/6}$, and we obtain that
$$\L_3 \sim \eps^{-1/2} \ell_{3} + \eps^{-1/6} \ell_{3}' \; \text{for some} \; \ell_{3}'=\ell_{3}(\theta) \; \text{uniformly bounded}.$$
\end{itemize}

\noindent It remains to construct an exact solution to the linear part of system \eqref{eq:system} with boundary conditions \eqref{bc2}. With $\L_j$ and $X_{k, \L_j}$ in \eqref{eq:eigen-BLs} at hand for $j=1,2,3$, let us construct a $\W_{\rm{BL}, \eps^{\alpha_j}}^{\L_j}$ as in \eqref{def:BL-general}.
By the above analysis, it is automatically known that $\alpha_1=\alpha_2=\frac 13$, $\alpha_3=\frac 12$ and $\beta_1= \beta_2= \frac 13, \, \beta_3=0$. We need to find the indexes $(p_j, q_j)$, i.e. to determine the function $a_{p_j, q_j}(\eps, \kk)$ for each $j=1,2,3$. 
Given the eigenvector $X_{k, \L_j}$ as in \eqref{eq:eigen-BLs}, let us denote $(U_{\L_j}, W_{\L_j}, B_{\L_j})$ the first three components of the four dimensional vector $X_{k, \L_j}$ respectively. In order to determine $a_{p_j, q_j}=a_{p_j, q_j}(\eps, \kk)$ for each $j=1,2,3$, we have to solve the following linear algebraic system
\begin{align}
a_{p_1, q_1} U_{\L_1} + a_{p_2, q_2} U_{\L_2}+a_{p_3, q_3} U_{\L_3}&=\mathfrak{u}, \notag \\
a_{p_1, q_1} W_{\L_1} + a_{p_2, q_2} W_{\L_2}+a_{p_3, q_3} W_{\L_3}&=\mathfrak{w},\notag\\
-a_{p_1, q_1}  \L_1 B_{\L_1}  - a_{p_2, q_2} \L_2  B_{\L_2}+a_{p_3, q_3} \L_3 B_{\L_3}&=\mathfrak{b},\notag
\end{align}
where we used the notation
$$\W_{\rm{inc}}^0|_{y=0}
=\int_{\R^2} 
\begin{pmatrix}
\mathfrak u\\ \mathfrak w\\ \mathfrak b
\end{pmatrix}
\widehat{\Psi}_{\eps, \sigma} (\kk, \theta) e^{i(kx-\omega t)} \, dk \, dm.$$

Using the precise structure of the eigenvector \eqref{eq:eigen-BLs}, one deduces that this amounts at inverting the matrix
\begin{align*}
M&=\begin{pmatrix}
1 & 1 & 1\\
 \frac{ik}{\L_1}  & \frac{ik}{\L_2} & \frac{ik}{\L_3}\\
-\frac{\L_1 \sin \gamma + i k \cos \gamma}{ i\omega - \eps \kappa_0 (k^2-\L_1^2)} & - \frac{\L_2 \sin \gamma + i k \cos \gamma}{ i\omega - \eps \kappa_0 (k^2-\L_2^2)}  & -\frac{\L_3 \sin \gamma + i k \cos \gamma}{ i\omega - \eps \kappa_0 (k^2-\L_3^2)}. \\
\end{pmatrix}
\end{align*}
From \eqref{eq:eigenv}, we know that $\L_j=\L_j(k) \sim \eps^{\eps^\alpha_j} \ell_j (\theta) \kk^{\beta_j}$ with $\ell_j(\theta)  \kk^{\beta_j} \neq \ell_i(\theta)  \kk^{\beta_i}$ for all $\eps>0$ if $i\neq j$. Then,
 the eigenvectors $X_{k, \L_j},\, j=1,2,3$ are linearly independent,
 $$\det M=\sum_{i,j=1, \, i \neq j}^3 O\left(k \frac{\L_i}{\L_j}\right),$$
 and the above matrix $A$ is invertible. Moreover, the leading order entries of the inverse matrix read
 \begin{align*}
 M^{-1}&\sim (\det M)^{-1}  \begin{pmatrix}
 ik (\frac{\L_2}{\L_3}-\frac{\L_3}{\L_2}) & \L_3-\L_2 & ik (\frac{1}{\L_2}-\frac{1}{\L_3})\\
 ik(\frac{\L_1}{\L_3}-\frac{\L_3}{\L_1}) & \L_1-\L_3 & ik (\frac{1}{\L_1}-\frac{1}{\L_3})\\
 ik(\frac{\L_1}{\L_2}-\frac{\L_2}{\L_1}) & \L_2-\L_1 & ik  (\frac{1}{\L_2}-\frac{1}{\L_1}).
 \end{pmatrix}.
 \end{align*}
 Since $(\det M)^{-1} \sim \left(\sum_{i=1,2} \frac{k \L_{3}}{\L_i}\right)^{-1} \sim \eps^\frac{1}{6} k^{-\frac{2}{3}}$, then one obtains the leading order terms of $a_{p_j, q_j}\, j=1,2,3$, i.e.
 \begin{align*}
 a_{p_1, q_1} & \sim \eps^\frac{1}{6} |k|^{-\frac{2}{3}} \times (\L_3-\L_2) \sim \eps^{-\frac{1}{3}} |k|^{-\frac{2}{3}},\notag \\
  a_{p_2, q_2} & \sim \eps^\frac{1}{6} |k|^{-\frac{2}{3}} \times (\L_1-\L_3) \sim \eps^{-\frac{1}{3}} |k|^{-\frac{2}{3}},\notag\\
   a_{p_3, q_3} & \sim \eps^\frac{1}{6} |k|^{-\frac{2}{3}} \times (\L_2-\L_1) \sim \eps^{-\frac{1}{6}} |k|^{-\frac{1}{3}}. 
 \end{align*}
 
Recalling that $(\kk, \frac{\pi}{2}-(\theta+\gamma))$ are the polar coordinates of $\k=(k,m)$, i.e. $k=\kk \sin (\theta+\gamma)$, then thanks to the angular localization due to $\widehat{\Psi}_{\eps, \sigma}^{p_j, q_j}$ in Definition \ref{def:inc-beam}, we have
 \begin{align}
|a_{p_1, q_1}| & \sim  \eps^{-\frac{1}{3}} \kk^{-\frac{2}{3}}, \quad 
|a_{p_2, q_2}|   \sim \eps^{-\frac{1}{3}} \kk^{-\frac{2}{3}}, \quad
|a_{p_3, q_3}|  \sim \eps^{-\frac{1}{6}} \kk^{-\frac{1}{3}}. \label{eq:amplitudeBL}
 \end{align}

This concludes the proof the lemma.
\end{proof}

\begin{proof}[End of the proof of Proposition \ref{prop:BL}]
Let $\Wbl^0:=\W_{\rm{sol}, \omega_0}^{\rm{crit}}=\W_{\rm{BL}, \eps^{1/3}}^{\L_1}+\W_{\rm{BL}, \eps^{1/3}}^{\L_2}+\W_{\rm{BL}, \eps^{1/2}}^{\L_3}$, as constructed in Lemma \ref{lem:asym1}. Then the solution to \eqref{eq:linsystem} with boundary conditions \eqref{eq:cond-BL} is given by $\W^0:=\W_{\rm{inc}}^0+\Wbl^0$, where the incident \bw is in Definition \ref{def:inc-beam}. The only error of the approximation is due to viscosity and diffusivity acting on the incident \bw and, from Lemma \ref{estbbl}, we have
 $$ \|(r^0_u, r^0_w, r^0_b) \|_{L^2} = O(\eps \|\Delta \W_{\rm{inc}}\|_{L^2}) =O(\eps \sigma^{-2}).$$

The proof of the proposition is concluded.
\end{proof}


\subsection{Linear analysis in different regimes}\label{subsec:33}
This section is dedicated to the construction of \bw and \blbw solving the linear system \eqref{eq:linsystem} for different asymptotics of $\omega$ and $\zeta$.
The linear analysis in the regimes $\omega \approx 0$ and $\omega \approx 1$ in the non-critical case $\zeta \approx 1$, which we develop below, will be needed.

\begin{lemma}[Asymptotics of the roots in the non-critical case $\zeta \approx 1$ for $|\omega| \ll 1$]\label{lem:asym1}
Let $\nu_0>0, \kappa_0>0$ be such that $\nu=\eps \nu_0, \kappa=\eps \kappa_0$ for any $0 < \eps \ll 1$. Let $ \omega \approx \eps^\frac 13$. Consider a \blbw ansatz  as in Definition \ref{def:inc-beam} for some constant value $\eta=\eta(\kappa_0, \nu_0, \gamma)>0$ solving the linear part of system \eqref{eq:system}.
The characteristic polynomial $\mathcal{P}(\L)$ in \eqref{eq:pol}, i.e. the determinant of the matrix $A_\eps(\omega, \kappa_0, \nu_0, k, \L)$ associated with the linear algebraic system \eqref{eq:algebra}, admits exactly three roots $\L_j=\L_j(k), j=1,2,3,$ with $\rm{Re}(\L_j)>0$.  Their asymptotic expansions in terms of the singular parameter $0<\eps \ll 1$ read as follows:
\begin{align*}
\L_1 &= \frac{ik}{\zeta}(\sin \gamma \cos \gamma + \omega \sqrt{1-\omega^2}) + \eps \ell_1''(\theta) k^3   +O(\eps^2 k^5); \\
\L_j&= \eps^{-\frac 12} \ell_j(\theta) + O(\eps^{-\frac16})\;  \quad \text{for} \; j=2,3,
\end{align*}
for some functions $\ell_j(\theta) \in C^\infty (\rm{supp}(\widehat{\Psi}_{\eps, \sigma}), \mathbb{C}),\; j=2,3,$ $\ell_1''(\theta) \in C^\infty (\rm{supp}(\widehat{\Psi}_{\eps, \sigma}), \mathbb{R})$, s.t. $ \rm{Re}(\ell_1''(\theta))>0$,
and where $\zeta$ is the criticality parameter in \eqref{def:critical-relation}.
\end{lemma}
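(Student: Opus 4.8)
The proof follows the same scheme as that of Lemma \ref{lem:asym}: for each admissible scaling ansatz $\L\sim\eps^{-\alpha}\kk^\beta\ell$ one first locates the \emph{formal} leading roots $\ell=\ell(\theta)$ of the degree-six characteristic polynomial $\hP(\L)$ in \eqref{eq:pol} by a dominant-balance argument, and then promotes each formal root to a genuine root of $\hP$ by the quantitative root-finding Lemma \ref{lem:roots}, exactly as in the critical case. The only real difference lies in \emph{where} the roots sit, which is dictated here by the regime $|\omega|\approx\eps^{1/3}\ll1$ together with $\zeta=\omega^2-\sin^2\gamma\approx-\sin^2\gamma$, so that now $\zeta$ is of order one (and negative), whereas $\zeta\approx\eps^{1/3}$ in Lemma \ref{lem:asym}. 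As $\eps\to0$ the top coefficient $-\eps^2\kappa_0\nu_0$ of $\hP$ degenerates, so three roots escape to infinity and three remain bounded; the task is to show that exactly three of the six carry a strictly positive real part.

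\textbf{The two large roots.} Inserting $\L=\eps^{-1/2}\ell$ into \eqref{eq:pol} and using $\omega=O(\eps^{1/3})$, $\zeta\approx-\sin^2\gamma$ and $\eta\le\kk\le\sigma^{-1}\le\eps^{-\mu}$ (Assumption \ref{ass}), the only monomials of the top order $\eps^{-1}$ are $-\kappa_0\nu_0\ell^6$ and $\zeta\ell^2$, giving the reduced equation $\ell^2(\zeta-\kappa_0\nu_0\ell^4)=0$. The non-trivial balance $\ell^4=\zeta/(\kappa_0\nu_0)<0$ has four fourth roots, of which precisely two, $\ell_{2,3}(\theta)=(|\zeta|/(\kappa_0\nu_0))^{1/4}\,e^{\pm i\pi/4}$, have strictly positive real part (the discarded ones being $e^{\pm 3i\pi/4}$). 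The first subleading monomial is $-i\omega(\kappa_0+\nu_0)\ell^4$ of order $\eps^{-2/3}$, i.e. a relative correction $O(\eps^{1/3})$, which is exactly what yields the expansion $\L_j=\eps^{-1/2}\ell_j(\theta)+O(\eps^{-1/6})$ for $j=2,3$. To make this rigorous I would verify, as in the display following \eqref{eq:eps2BL}, the three hypotheses of Lemma \ref{lem:roots} at the points $\eps^{-1/2}\ell_j$, estimating $|\hP|$, $|\hP'|$ and $\sup_{|z|\le 2r}|\hP''|$ and checking $|\hP(\eps^{-1/2}\ell_j)|\le |\hP'(\eps^{-1/2}\ell_j)|^2/(4\sup|\hP''|)$ for $\eps$ small.

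\textbf{The bounded root.} For $\L=O(\kk)$ every $\eps$-dependent monomial in \eqref{eq:pol} is negligible and $\hP$ reduces to the quadratic $\zeta\L^2-2ik\sin\gamma\cos\gamma\,\L+k^2(\cos^2\gamma-\omega^2)=0$. A direct computation of its discriminant, using $\zeta=\omega^2-\sin^2\gamma$, produces the remarkable simplification $-\sin^2\gamma\cos^2\gamma-\zeta(\cos^2\gamma-\omega^2)=\omega^2(\omega^2-1)$, so that the square root equals $ik\,\omega\sqrt{1-\omega^2}$ and the two leading roots are $\tfrac{ik}{\zeta}\bigl(\sin\gamma\cos\gamma\pm\omega\sqrt{1-\omega^2}\bigr)$, both purely imaginary. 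Selecting the $+$ branch gives the leading term of the statement; its vanishing real part is fixed only at the next order, where substituting $\L=\L_1^{(0)}+\eps\,\ell_1''(\theta)k^3$ back into \eqref{eq:pol} and matching the $O(\eps)$ contribution determines $\ell_1''(\theta)$ and, crucially, shows $\Re(\ell_1'')>0$ (a genuine, slowly decaying boundary layer); the companion $-$ branch is found to acquire negative real part and is discarded. Lemma \ref{lem:roots}, now applied with $r\sim\eps\kk^3$, upgrades the formal expansion to $\L_1=\tfrac{ik}{\zeta}(\sin\gamma\cos\gamma+\omega\sqrt{1-\omega^2})+\eps\ell_1''(\theta)k^3+O(\eps^2k^5)$.

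\textbf{Counting and the main difficulty.} Collecting $\L_2,\L_3$ (large) and $\L_1$ (bounded) gives exactly three roots with $\Re(\L_j)>0$, as claimed, the remaining three having negative real part. The routine parts are the dominant-balance bookkeeping and the verification of the hypotheses of Lemma \ref{lem:roots}. The genuinely delicate step is the sign of the real parts at \emph{subleading} order: the bounded root and two of the large roots all lie on the imaginary axis at leading order, so the count $\{3\text{ with }\Re>0\}$ and, above all, the strict inequality $\Re(\ell_1'')>0$ cannot be read off the leading balance but depend on the careful next-order perturbation sketched above.
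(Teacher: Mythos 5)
Your proposal is correct and follows essentially the same route as the paper's proof: dominant balance in the two regimes $\L\sim\eps^{-1/2}\ell$ (quartic $\kappa_0\nu_0\ell^4=\zeta$, two of four roots retained) and $\L\sim k\ell$ (quadratic with the discriminant identity $-\sin^2\gamma\cos^2\gamma-\zeta(\cos^2\gamma-\omega^2)=\omega^2(\omega^2-1)$), a Newton step to extract $\Re(\ell_1'')>0$ at order $\eps k^3$, and Lemma \ref{lem:roots} to upgrade each formal root to a genuine one. The only slip is in your overview sentence — four roots (not three) escape to infinity and two remain bounded; your detailed count (two of four large roots plus one of two bounded roots carry positive real part) is the correct one and matches the paper, and note also that to reach the stated $O(\eps^2k^5)$ error one must recenter Lemma \ref{lem:roots} at the Newton-corrected point with $r\sim\eps^2|k|^5$ rather than $r\sim\eps\kk^3$.
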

\begin{proof}
We apply again Lemma \ref{lem:roots} following the method of the proof of Lemma \ref{lem:asym}.\\
$\bullet$ Plugging the ansatz $\tilde \L \sim \eps^{-\frac 13} \ell$ inside $\mathcal{P}(\L)$, and using that $\kk \le \sigma^{-1}\le \eps^{-\mu}$, the leading order part of the polynomial reads
\begin{align*}
\nu_0\kappa_0 \ell^4 +\sin^2\gamma=0.
\end{align*}
The solutions with positive real part are given by
\begin{align*}
\ell_{2}=(\sin^2\gamma/(\nu_0 \kappa_0))^{1/4} \exp(i\pi/4), \quad \ell_{3}=(\sin^2\gamma/(\nu_0 \kappa_0))^{1/4} \exp(i7\pi/4). 
\end{align*}
To apply Lemma \ref{lem:roots}, it is enough to notice that for $i=1,2,$
\begin{align*}
|\hP(\eps^{-1/2} \ell_{i}) | \le 2 \eps^{-2/3} |\omega_0| (\kappa_0+\nu_0) |\ell_{i}|^4, 
\end{align*}
while 
\begin{align*}
\frac{|\hP(\eps^{-1/2} \ell_{i})|}{|\hP'(\eps^{-1/2} \ell_{i})|} \le C \eps^{-1/6} \quad \text{for some universal constant} \, C>0. 
\end{align*}
Moreover,
\begin{align*}
\frac{|\hP'(\eps^{-1/2} \ell_{i})|^2}{8 \sup_{|z | \le C \eps^{-1/6}} |\hP''(z)|} \ge C' \eps^{-1}\quad \text{for some universal constant} \, C'>0.
\end{align*}
We check therefore that the assumptions of Lemma \ref{lem:roots} are fulfilled for every $\eps \le \eps_0$ small enough. \\
$\bullet$ Next, we look for an eigenvalue of the form $\tilde \L\sim k \ell$. We find that the leading order equation is
\begin{align*}
\zeta \ell^2 - 2 i \ell\sin \gamma \cos \gamma + \cos^2\gamma-\omega^2=0,
\end{align*}
whose solutions are given by
\begin{align*}
\tilde \L_{1}^\pm= \frac{ik}{\zeta^2}(\sin \gamma \cos \gamma + \omega \sqrt{1-\omega^2}) =-{ik}\ell_1\pm i \eps^{1/3} k \ell_{1}'(\theta) \quad \text{with}\quad \ell_1=-i \cot \gamma;  \; \ell_1'(\theta)=-\frac{i\omega_0(\theta)}{\sin^2\gamma}+o(1),
\end{align*}
(in this context $\omega=\eps^\frac 13 \omega_0$).
We need the next order expansion of $\tilde \L_{1}^\pm$, 
\begin{align*}
\tilde \L_1'^{\pm}-\tilde \L_1^{\pm} & = - \frac{\hP(k \ell_{1})}{\hP'(k \ell_{1})} \sim  \frac{i \eps \omega (\kappa_0+\nu_0) k^4 (\ell_{1})^4 - 2i\omega (\nu_0+\kappa_0) \eps k^4 (\ell_{1})^2 + i \eps \omega (\kappa_0+\nu_0) k^4}{-2ik \sin \gamma \cos \gamma - 2 k \sin^2\gamma  \ell_{1}+ 2\eps^{2/3} \omega_0^2 k \ell_{1} \pm 2i \eps^{1/3}k \sin^2\gamma \ell_1' }=\frac{N}{D}.
\end{align*}
As $\ell_{1}=- \frac{i}{\tan \gamma}$, then $N>0$ for $\eps$ small enough, while the first two addends of $D$ vanish. Choosing $\tilde \L_1^-=-\frac{ik}{\tan \gamma} - i \eps^{1/3} \ell_1'$, one has
\begin{align*}
\tilde \L_1'^-&\sim \frac{i \eps \omega (\kappa_0+\nu_0) k^3 ((\tan \gamma)^{-4} +  (\tan \gamma)^{-2}+1)}{2i \eps^{1/3}\sin^2\gamma \ell_1'+ 2i\eps^{2/3} \omega_0^2 \tan \gamma^{-1} } =  \eps k^3 \ell_1'',
\end{align*}
for some function  $\ell_1''= \ell_1''(\theta)$ such that $\text{Re}( \ell_1'')>0$ (we recall that $\omega=\eps^{1/3} \omega_0$.)
The next order expansion is then 
\begin{align*}
\tilde \L_1+\tilde \L_1'^-= \frac{i k}{\tan\gamma} - {i\eps^{1/3}k\ell_1'} + \eps k^3  \ell_1'', \qquad \text{Re}(\ell_1'')>0.
\end{align*}
Now, we want to prove that there exists a root $\L_1$ of the original polynomial $\hP(\L)$ which is $r$-close to $\tilde \L_1+\tilde \L_1'^-$ applying Lemma \ref{lem:roots}. We have the following
\begin{align*}
|\hP(\tilde \L_1+\tilde \L_1'^-)| &\le \frac{12\nu_0 \kappa_0}{\tan^2\gamma} \eps^2 k^6; \\
\frac{|\hP(\tilde \L_1+\tilde \L_1'^-)|}{|\hP'(\tilde \L_1+\tilde \L_1'^-)|} &\le C(\nu_0, \kappa_0, \omega_0, \gamma) \eps^2 |k|^5; \\
\frac{|\hP'(\tilde \L_1+\tilde \L_1'^-)|^2}{|\hP''(z)|_{|z| \le C \eps^2 |k|^5}} &\ge \frac{k^4}{2\tan^2\gamma}.
\end{align*}
We obtain therefore that the assumptions of Lemma \ref{lem:roots} are always fulfilled for $\eps\le \eps_0$ small enough provided that $\eps^2 k^2 \ll 1$, i.e. $\eps^{2-2\mu} \ll 1$, namely $\mu < 1$. The proof is concluded.
\end{proof}

\begin{lemma}[Asymptotics of the roots in the non-critical case $\zeta \approx 1$ for bounded $\omega$]\label{lem:asym3}
Let $\nu_0>0, \kappa_0>0$ be such that $\nu=\eps \nu_0, \kappa=\eps \kappa_0$ for any $0 < \eps \ll 1$. Let $ \omega \approx 1$ and $\zeta \approx 1$. Consider a \blbw ansatz  as in Definition \ref{def:inc-beam} for some constant value $\eta=\eta(\kappa_0, \nu_0, \gamma)>0$ solving the linear part of system \eqref{eq:system}.
The characteristic polynomial $\mathcal{P}(\L)$ in \eqref{eq:pol}, i.e. the determinant of the matrix $A_\eps(\omega, \kappa_0, \nu_0, k, \L)$ associated with the linear algebraic system \eqref{eq:algebra}, admits exactly three roots $\L_j=\L_j(k), j=1,2,3,$ with $\rm{Re}(\L_j)>0$. Their asymptotic expansions in terms of the singular parameter $0<\eps \ll 1$ read as follows:
\begin{align*}
\L_1 = \frac{ik}{\zeta} (\sin \gamma \cos \gamma +  \omega \sqrt{1-\omega^2})+O(\eps |k|^3) ; \quad \L_j= \eps^{-\frac 12} \ell_j(\theta)+O(|k|), \; j=2,3,
\end{align*}
for some functions $\ell_j(\theta) \in C^\infty (\rm{supp}(\widehat{\Psi}_{\eps, \sigma}) , \mathbb{C}), \; j=2,3$.
\end{lemma}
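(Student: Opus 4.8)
The plan is to mirror, almost verbatim, the proofs of Lemmas~\ref{lem:asym} and~\ref{lem:asym1}: I regard $\hP(\L)$ in \eqref{eq:pol} as a degree-six polynomial in $\L$ which is a singular perturbation in $\eps$, I locate its roots with $\Re(\L)>0$ by substituting the scaling ans\"atze $\L\sim\eps^{-\alpha}\ell$, reading off a reduced algebraic equation for $\ell$, and then upgrading each approximate root to a genuine root of $\hP$ via the Newton/Rouch\'e-type estimate of Lemma~\ref{lem:roots}. The only novelty with respect to Lemma~\ref{lem:asym1} is that here $\omega\approx 1$ and $\zeta\approx 1$ are both of order one, which in fact simplifies the reduced equations; throughout I use $\eta\le\kk\le\sigma^{-1}\le\eps^{-\mu}$ with $\mu$ small.

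First I would treat the two boundary-layer (fast) roots by inserting $\L\sim\eps^{-1/2}\ell$. Since $\omega\approx 1$, the three terms of order $\eps^{-1}$ in $\hP(\eps^{-1/2}\ell)$ are $-\kappa_0\nu_0\ell^6$, $-i\omega(\kappa_0+\nu_0)\ell^4$ and $\zeta\ell^2$, so that after dividing by $\ell^2$ the reduced equation is the biquadratic
\begin{align*}
\kappa_0\nu_0\,\ell^4+i\omega(\kappa_0+\nu_0)\,\ell^2-\zeta=0.
\end{align*}
Its four roots come in $\pm$ pairs, and a short computation using $\omega\approx1,\zeta\approx1$ (in particular $\zeta\neq 0$) shows that none of them is purely imaginary; hence exactly two, say $\ell_2,\ell_3$, satisfy $\Re(\ell_j)>0$. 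For these the hypotheses of Lemma~\ref{lem:roots} are verified as in Lemma~\ref{lem:asym1}, the dominant balances being $\hP(\eps^{-1/2}\ell_j)=O(\eps^{-1/2}\kk)$ (the leading surviving term coming from $-2i\L k\sin\gamma\cos\gamma$), $\hP'(\eps^{-1/2}\ell_j)=O(\eps^{-1/2})$ and $\hP''=O(1)$ on the relevant disc, where I use $\kk\le\eps^{-\mu}$ with $\mu<\tfrac12$. This yields a correction of size $O(\kk)=O(|k|)$, hence $\L_j=\eps^{-1/2}\ell_j+O(|k|)$ for $j=2,3$.

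Next I would treat the slow root by inserting $\L\sim k\ell$. Letting $\eps\to0$ at fixed $k$, the surviving $O(k^2)$ terms give the reduced quadratic $\zeta\ell^2-2i\ell\sin\gamma\cos\gamma+(\cos^2\gamma-\omega^2)=0$, whose discriminant collapses to $-4\omega^2(1-\omega^2)$ and whose roots are $\ell_1^{\pm}=\tfrac{i}{\zeta}\big(\sin\gamma\cos\gamma\pm\omega\sqrt{1-\omega^2}\big)$. Both are \emph{purely imaginary} at leading order, so — exactly as in the mechanism producing $\ell_1''$ in Lemma~\ref{lem:asym1} — the sign of $\Re(\L_1)$ is decided only at the next order. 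Since each $\ell_1^{\pm}$ is a simple root I would compute $\hP'(k\ell_1^{\pm})\sim\pm 2i\omega\sqrt{1-\omega^2}\,k$ and $\hP(k\ell_1^{\pm})\sim -i\eps\omega(\kappa_0+\nu_0)k^4\big((\ell_1^{\pm})^2-1\big)^2$, so that the Newton correction $-\hP/\hP'$ equals $\pm\tfrac{\eps(\kappa_0+\nu_0)}{2\sqrt{1-\omega^2}}\,k^3\big((\ell_1^{\pm})^2-1\big)^2$. As $\ell_1^{\pm}$ is purely imaginary, $(\ell_1^{\pm})^2$ is real and negative, so $\big((\ell_1^{\pm})^2-1\big)^2$ is real and positive; since $\omega\approx1>0$ and the incidence convention (Remark~\ref{rmk:equality}) forces $\mathrm{sgn}(k)=\mathrm{sgn}(\omega)$, only the root $\ell_1^{+}$ yields a correction with positive real part. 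This selects $\L_1=\tfrac{ik}{\zeta}\big(\sin\gamma\cos\gamma+\omega\sqrt{1-\omega^2}\big)+O(\eps|k|^3)$ with $\Re(\L_1)>0$, while $\ell_1^{-}$ produces a root with $\Re<0$ that is discarded; a final application of Lemma~\ref{lem:roots} makes this rigorous.

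It remains to count. The fast ansatz furnishes four roots, split $2$–$2$ by the sign of $\Re$, while the slow ansatz resolves the double root $\ell=0$ of the reduced degree-six equation into $\L_1^{\pm}$, split $1$–$1$; together these exhaust the six roots of $\hP$ and show that exactly three of them, namely $\L_1,\L_2,\L_3$, have positive real part. The regularity $\ell_j(\theta)\in C^\infty(\mathrm{supp}\,\widehat{\Psi}_{\eps,\sigma})$ follows from the smoothness of $\omega(\theta),\zeta(\theta)$ there and the simplicity of the roots (recall $\omega\neq1$ and $\zeta\neq0$ in this regime). I expect the only delicate point to be, as in Lemma~\ref{lem:asym1}, the slow root $\L_1$: its leading part being purely imaginary, one must carry the expansion one order further and check the sign of the real part of its $O(\eps k^3)$ correction, both to guarantee that $\L_1$ genuinely generates a \emph{decaying} boundary layer and to pin down that the number of positive-real-part roots is exactly three. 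Everything else is a routine verification of the hypotheses of Lemma~\ref{lem:roots}.
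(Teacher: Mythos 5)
Your proposal follows the same route as the paper's proof: scaling ans\"atze $\L\sim\eps^{-1/2}\ell$ and $\L\sim k\ell$ reduced to algebraic equations and then upgraded to genuine roots via Lemma \ref{lem:roots}; in fact you supply two details the paper leaves implicit (that the reduced biquadratic for the fast roots now retains the $i\omega(\kappa_0+\nu_0)\ell^2$ term since $\omega\approx 1$, and the explicit Newton-step computation $-\hP/\hP'=\pm\tfrac{\eps(\kappa_0+\nu_0)}{2\sqrt{1-\omega^2}}k^3((\ell_1^\pm)^2-1)^2$ fixing the sign of $\Re(\L_1)$), and both check out. The one caveat is that your sign argument for $\L_1$ presumes $\omega^2<1$ (so that $\ell_1^\pm$ are purely imaginary at leading order); when $\omega^2>1$ (the evanescent second harmonic, $\sin\gamma>\tfrac12$) the term $\tfrac{ik}{\zeta}\,\omega\sqrt{1-\omega^2}$ is already real at leading order and one must choose the branch of the square root so that $\Re(\L_1)>0$ --- a point the paper's own proof also glosses over.
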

\begin{proof}
We apply again the method of Lemma \ref{lem:asym}. \\
$\bullet$ We look for a root o13f the type $\tilde \L \sim k \ell$. Since in this case $\omega \sim 1$, then the leading order terms read
\begin{align*}
\zeta \ell^2 - 2i \ell \sin \gamma \cos \gamma +\cos^2\gamma-\omega^2=0,
\end{align*}
whose solutions are given by
$$\ell_{\pm} =  \frac{i}{\zeta} (\sin \gamma \cos \gamma \pm  \omega \sqrt{1-\omega^2}).$$
We want to apply Lemma \ref{lem:roots}. Since $|\hP(k \ell_\pm)|=O(\eps k^4)$ and  $|\hP (k \ell_{\pm})/\hP'(k\ell_\pm)| = (\eps |k|^3)$, we have that
$$\frac{|\hP'(k \ell_\pm)|^2}{\sup_{|z| \le C \eps |k|^2} |\hP''(z)|} \ge C' k^2 \ge C'' \eps k^4 \ge |\hP(k \ell_\pm)|,$$
for all $\eps \le \eps_0$ small enough provided that $\eps \kk^2 \le \eps^{1-2\mu} \ll 1$, namely $\mu < \frac 12$. Therefore, the assumptions of Lemma \ref{lem:roots} are fulfilled and we deduce that there exists a root $\L_1$ of the original polynomial $\hP(\L)$ such that
$$\L_1 =   \frac{ik}{\zeta} (\sin \gamma \cos \gamma +  \omega \sqrt{1-\omega^2})+O(\eps |k|^3), \quad \text{Re} (\L_1)>0, \quad \text{Re}(\L_1) = O(\eps |k|^3). $$
$\bullet$ Plugging the ansatz $\tilde \L \sim \eps^{-1/2} \ell$, applying the same procedure as in the first part of the proof of the previous lemma (where roots of asymptotics $\sim \eps^{-1/2}$ are also investigated), we obtain similalar expansions satisfying the assumptions of Lemma \ref{lem:roots}. We omit the details and refer to the first part of the proof of the previous lemma. The proof is therefore concluded.
\end{proof}

\section{The weakly nonlinear system}\label{nonlin} 
Since system \eqref{eq:system} is \emph{weakly nonlinear} as the nonlinear term is weakened by the small parameter $\delta>0$, it is reasonable to look at the solution $\W^0$ to the linear system \eqref{eq:linsystem} in Proposition \ref{prop:BL} as an approximate solution to the weakly nonlinear system.

In the following, let us denote by $\mathbb{P}$ the standard Leray projector in $L^2(\R^2_+, \R^3)$, and we introduce 
\begin{align}\label{def:L}
\mathcal{L}_\eps \begin{pmatrix}
u \\
w\\
b
\end{pmatrix} = \mathbb{P} \begin{pmatrix}
-\nu_0 \eps \Delta & 0 & -  \sin \gamma\\
0 & -\nu_0 \eps  \Delta & - \cos \gamma\\
\sin \gamma & \cos \gamma & -\kappa_0 \eps  \Delta 
\end{pmatrix}\begin{pmatrix}
u \\
w\\
b
\end{pmatrix}.
\end{align}

For the unknowns  $\W=(u, w, b)^T, \W'=(u', w', b')^T$, we also introduce the following bilinear form 
\begin{align}\label{eq:quadratic}
Q(\W, \W')=\mathbb{P} (u \de_x + w\de_y) \W'. 
\end{align}

With this notation, the nonlinear system \eqref{eq:system} rewrites as
\begin{align}\label{eq:compactsystem}
\de_t \W + \mathcal{L}_\eps \W = -\delta Q(\W, \W).
\end{align}

Plugging the linear solution $\W^0$ inside the nonlinear system in compact form, it generates an error due to the quadratic nonlinearity 
\begin{align}\label{eq:compact}
\mathcal{E}^0=-\delta Q(\W^0, \W^0).
\end{align}
This section is devoted to the construction an approximate solution to \eqref{eq:compactsystem} correcting the error $\mathcal{E}^0$ generated by the nonlinear term. Note that despite the presence of $0<\delta \ll 1$ in front of the nonlinear term, $\mathcal{E}^0$ contains non-negligible error terms because of the boundary layers of the linear solution  $\W^0$ provided by Proposition \ref{prop:BL}.

\begin{proposition}\label{prop:nonlin}
Let $\W^0$ provided by Proposition \ref{prop:BL} be the \bcb{solution to the linear system \eqref{eq:linsystem} with boundary conditions \eqref{bc2}}. There exists a function $\W^1=(u^1, w^1, b^1)^T$ and a remainder $r^1$ such that
\begin{align*}\de_t \W^1 + \mathcal{L}_\eps \W^1 = -\delta Q(\W^0, \W^0)+r^1, \\
u^1|_{y=0} = w^1|_{y=0}=\de_yb^1|_{y=0}=0,
\end{align*}
with
$$\|r^1\|_{L^2(\mathbb{R}^2)} =O(\delta \eps^{\frac 16} \sigma^{-2}) =O(\delta \eps^{\frac 16-2\mu}).$$
The function $\W^1$ reads
$$
\W^1=\W_{\rm{II}}^1 + \W_{\rm{MF}}^1 + \W^1_{\rm{BL}, \eps^{1/3}} + \W^1_{\rm{BL}, \eps^{1/2}}
 $$
 where
 \begin{itemize}
 \item $\W_{\rm{II}}^1$ ($\rm{II}$ for second harmonic) is  associated to the time frequency $\omega_{\rm{II}}=\pm 2 \sin \gamma + O(\eps^\frac 13);$
 \item $\W_{\rm{MF}}^1$ (MF for mean flow) is associated to the frequency $\omega_{\rm{MF}}=O(\eps^\frac 13)$
 \item $\W^1_{\rm{BL}, \eps^{1/2}}$ (resp. $ \W^1_{\rm{BL}, \eps^{1/3}}$) is the product of a \blbw\ of order $(\frac 12, 0, -\frac 23, - \frac 13)$ and a \blbw of order $(\frac 12, 0, 0, \frac 23)$ (resp. $(\frac 13 ,\frac 13,0,0)$ and $(\frac 13, \frac 13, - \frac 23, -\frac 13)$). 
 \end{itemize}
The sizes of the boundary layer components read
$$\|\W_{\rm{BL}, \eps^{1/3} }^1\|_{L^2} = O(\delta \eps^{-\frac 16} \sigma^{-\frac 76}), \quad \|\W^1_{\rm{BL}, \eps^{1/2}}\|_{L^2(\mathbb{R}^2)}=O(\delta \eps^{-\frac{1}{12}} \sigma^{-\frac{11}{6}}),$$
while, for the mean flow and the second harmonic, we have
$$ \|\W_{\rm{MF}}^1\|_{L^2} \sim \|\W_{\rm{II}}^1\|_{L^2} = O(\delta \eps^{-\frac 16} \sigma^{-\frac{10}{3}}). $$
\end{proposition}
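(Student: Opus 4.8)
The plan is to solve the forced linear problem $\de_t\W^1 + \mathcal{L}_\eps\W^1 = -\delta Q(\W^0,\W^0)$ frequency-sector by frequency-sector and then to restore the boundary conditions with boundary layer beam waves. First I would expand the source by bilinearity of $Q$ in \eqref{eq:quadratic}: writing $\W^0=\W_{\rm{inc}}^0+\W_{\rm{BL}}^0$ and splitting the incident beam as $\W_{\rm{inc}}^0=\W_++\W_-$, where $\W_\pm$ carries the time frequency $\pm\omega_0$ with $\omega_0\approx\sin\gamma$ (this is exactly the two-piece structure forced by the support of $\widehat\Psi_{\eps,\sigma}$, see Remark \ref{rmk:equality}). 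Because $Q$ multiplies two such oscillatory factors, the output time frequencies are the pairwise sums: the products $\W_\pm\times\W_\pm$ oscillate near $\pm2\omega_0$ and feed the second harmonic, whereas $\W_+\times\W_-$ oscillates near $0$ and feeds the mean flow. The products involving at least one factor of $\W_{\rm{BL}}^0$ inherit the exponential decay in $y$, hence are localized in a boundary layer; by the product estimates \eqref{eq:normprod} of Lemma \ref{estbbl} their sizes are dictated by the orders of the two factors, which is precisely what produces the order labels $(\tfrac12,0,\dots)$ and $(\tfrac13,\tfrac13,\dots)$ recorded in the statement.

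Next, for the two interior sectors I would invert the symbol of $\de_t+\mathcal{L}_\eps$ in the Fourier variables. For a forcing oscillating at frequency $\Omega$ with wavenumber $\k$ this reduces to inverting $-i\Omega\,\mathrm{Id}+\mathcal{L}_\eps(\k)$, which is legitimate precisely because both output frequencies are \emph{non-resonant}: on the support of the source the dispersion relation gives $\omega_{k,m}\approx\sin\gamma$, so $\Omega^2-\omega_{k,m}^2\approx3\sin^2\gamma$ in the second harmonic sector and $\approx-\sin^2\gamma$ in the mean flow sector, both bounded away from $0$. This yields the interior beam waves $\W_{\rm{II}}^1$ (localized near $\Omega\sim\pm2\omega_0$) and $\W_{\rm{MF}}^1$ (localized near $\Omega\sim0$), that is a second harmonic and a mean flow beam wave in the sense of Definition \ref{def:inc-beam}. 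Tracking the $\eps$- and $\sigma$-powers of the inverted symbol against the size $O(\delta\eps^{1/6}\sigma^{-2})$ of the beam--beam source gives $\|\W_{\rm{MF}}^1\|_{L^2}\sim\|\W_{\rm{II}}^1\|_{L^2}=O(\delta\eps^{-1/6}\sigma^{-10/3})$.

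These interior pieces do not satisfy the homogeneous boundary conditions \eqref{eq:cond-BL}, and the boundary-layer-localized products of the source are still uncorrected; both defects are cured by boundary layer beam waves, constructed exactly as in the proof of Proposition \ref{prop:BL}. Here I would reuse the root analysis of Section \ref{linvis}: the mean flow lives at $\omega_{\rm{MF}}=O(\eps^{1/3})$ and thus falls under Lemma \ref{lem:asym1}, while the second harmonic lives at $\omega_{\rm{II}}\approx\pm2\sin\gamma$ (so $\zeta\approx1$) and falls under Lemma \ref{lem:asym3}; in each regime there are exactly three roots $\L_j(k)$ with $\Re\L_j>0$, scaling like $\eps^{-1/3}\kk^{1/3}$ and $\eps^{-1/2}$, and one solves a $3\times3$ linear system for the amplitudes so that $\W_{\rm{BL},\eps^{1/3}}^1+\W_{\rm{BL},\eps^{1/2}}^1$ cancels the trace left on $y=0$ by the interior terms and by the boundary-layer products. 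The resulting orders are the ones quoted, and \eqref{eq:normprod} then gives $\|\W_{\rm{BL},\eps^{1/3}}^1\|_{L^2}=O(\delta\eps^{-1/6}\sigma^{-7/6})$ and $\|\W_{\rm{BL},\eps^{1/2}}^1\|_{L^2}=O(\delta\eps^{-1/12}\sigma^{-11/6})$.

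The hard part will be the mean flow sector. Although $\Omega^2-\omega_{k,m}^2$ stays bounded away from zero, the mean flow wavenumber produced by $\W_+\times\W_-$ has horizontal component $k\approx(|\k_1|-|\k_2|)\sin2\gamma$, which vanishes when the two interacting moduli coincide; this is exactly the degeneracy that forced the non-physical corrector in \cite{BDSR19}. The point I would emphasize is that the \emph{spatial localization of the beam} keeps $|\k|$ bounded by $\sigma^{-1}$ and the angle within $O(\eps^{1/3})$ of $\gamma$, so the wavenumber support is compact; combined with the divergence-free structure of $Q$ in \eqref{eq:quadratic}, which controls the behaviour of the forcing as $k\to0$, this keeps the inverted symbol integrable and gives $\W_{\rm{MF}}^1$ a genuinely finite $L^2$ norm of the stated size, with no artificial corrector. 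Finally, the remainder $r^1$ collects the pieces that are not corrected in closed form --- the viscous/diffusive action $\eps\Delta$ on the purely oscillatory interior beams together with the sub-leading $O(\eps^{1/6})$-type tails of the root expansions of Lemmas \ref{lem:asym1}--\ref{lem:asym3}; estimating these with Lemma \ref{estbbl} exactly as for the residual $r^0$ of Proposition \ref{prop:BL} yields $\|r^1\|_{L^2}=O(\delta\eps^{1/6}\sigma^{-2})$, completing the construction.
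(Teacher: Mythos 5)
Your proposal reproduces the broad two--step architecture of the paper's proof (particular solution for the forced problem, then a boundary lift via the root analysis of Lemmas \ref{lem:asym1} and \ref{lem:asym3}), but it misattributes where the interior terms $\W_{\rm{II}}^1$ and $\W_{\rm{MF}}^1$ come from, and this breaks the size bookkeeping. You source them from $Q(\W_{\rm{inc}},\W_{\rm{inc}})$ by inverting the non-resonant symbol; but that interaction is $\rm{(c1)}$ in Table \ref{table1}, of size $O(\delta\eps^{1/6}\sigma^{-2})$, and in the paper it is deliberately \emph{not} corrected --- it is the dominant contribution to the remainder $r^1$ (together with the $\rm{(c2)}$ and $\rm{(d)}$ interactions and the errors $r^1_{\rm{(a)}},r^1_{\rm{(b)}}$ coming from the approximation of the Leray projector). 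Inverting a symbol with an $O(1)$ spectral gap against a source of size $O(\delta\eps^{1/6}\sigma^{-2})$ cannot produce the stated $\|\W_{\rm{MF}}^1\|_{L^2}\sim\|\W_{\rm{II}}^1\|_{L^2}=O(\delta\eps^{-1/6}\sigma^{-10/3})$, which is larger by a factor $\eps^{-1/3}\sigma^{-4/3}$. In the paper these interior beams are instead the \emph{linear} mean flow and second harmonic solutions $\W^{1,\rm{lin},\L_1}_{\rm{(a\ell)};\rm{MF/II}}$ attached to the slowly decaying root $\L_1$ of Lemmas \ref{lem:asym1}--\ref{lem:asym3}; they are introduced in Step 2 to cancel the $y=0$ trace of the \emph{nonlinear boundary-layer correctors} generated by the large interactions $\rm{(a)}$ and $\rm{(b)}$ (built in Step 1 by reducing, via the Leray projector in the boundary-layer scaling, to a $2\times2$ system for $(U,B)$, inverting $\de_t+L$ with the projectors $\Pi_\pm$, and recovering $w$ from the divergence-free condition). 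Their size is inherited from that trace, of order $O(\delta\eps^{-1/6}\sigma^{-7/6})$, not from the incident--incident source.

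The second, related gap is that you do not engage with the estimate that occupies most of the paper's proof: showing that the mean-flow lift attached to $\L_1$, whose real part is only $\Re(\L_1)\sim\eps(k+k')^3$, nevertheless has finite $L^2$ norm of the claimed size. Appealing to "divergence-free structure keeping the inverted symbol integrable" does not address this; the paper's mechanism is the change of variables $\widetilde m=k(\cot\gamma+(\omega+\omega')/\sin^2\gamma)$, the comparison with the non-decaying beam $\W_{\rm{nodecay}}$ on $\{y\le\eps^{-\lambda}\}$, and an integration by parts in $\widetilde m$ to control the tail $\{y>\eps^{-\lambda}\}$. Note also that on $\mathcal{M}^0$ the horizontal output wavenumber satisfies $|k+k'|\ge\eps^{1/3}\eta$, so the issue is slow decay in $y$ rather than a vanishing wavenumber. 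Without these two ingredients the stated norms of $\W^1_{\rm{MF}},\W^1_{\rm{II}}$ and the identification of the leading term of $r^1$ are not established.
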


\begin{corollary} [Proof of (i) of Theorem  \ref{thm:main1}]\label{cor:nonlin}
The function $\W^{\rm{app}}=(u^{\rm{app}}, w^{\rm{app}}, b^{\rm{app}})^T:=\W^0+\W^1$, where $\W^0, \W^1$ are provided by Proposition \ref{prop:BL} and Proposition \ref{prop:nonlin} respectively, is
an approximate solution to the system \eqref{eq:system}, in the following sense:
\begin{align*}
\de_t \W^{\rm{app}}+ \mathcal{L}_\eps \W^{\rm{app}} &= \delta Q(\W^{\rm{app}}, \W^{\rm{app}})+ R^{\rm{app}}, \\
u^{\rm{app}}|_{y=0} & = w^{\rm{app}}|_{y=0}=\de_y b^{\rm{app}}|_{y=0}=0,
\end{align*}
where $\|R^{\rm{app}}\|_{L^2}=O(\max\{\delta \eps^{\frac 16} \sigma^{-2}, \delta^2 \eps^{-\frac 13} \sigma^{-\frac 52}\})$. If $\delta=O(\eps^\frac 12 \sigma^\frac 23)$ (Assumption \ref{ass}), then $\|R^{\rm{app}}\|_{L^2}=O(\delta \eps^{\frac 16} \sigma^{-2}).$
\end{corollary}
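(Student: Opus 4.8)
The plan is to regard $\W^{\rm{app}}=\W^0+\W^1$ as the outcome of one Picard iteration for the compact equation \eqref{eq:compactsystem} and to extract its residual directly from the two preceding propositions. First I would substitute $\W^{\rm{app}}$ into $\de_t\W^{\rm{app}}+\mathcal L_\eps\W^{\rm{app}}+\delta Q(\W^{\rm{app}},\W^{\rm{app}})$ and use the Leray-projected defining relations $\de_t\W^0+\mathcal L_\eps\W^0=r^0$ from Proposition \ref{prop:BL} (the pressure gradient is killed by $\mathbb P$, which is already built into $\mathcal L_\eps$) and $\de_t\W^1+\mathcal L_\eps\W^1=-\delta Q(\W^0,\W^0)+r^1$ from Proposition \ref{prop:nonlin}. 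By the bilinearity of the form $Q$ in \eqref{eq:quadratic}, $Q(\W^{\rm{app}},\W^{\rm{app}})=Q(\W^0,\W^0)+Q(\W^0,\W^1)+Q(\W^1,\W^0)+Q(\W^1,\W^1)$, and the leading piece $-\delta Q(\W^0,\W^0)$ is cancelled exactly by the right-hand side that $\W^1$ was designed to balance. This leaves the clean identity
\begin{equation}
R^{\rm{app}}=r^0+r^1+\delta\big(Q(\W^0,\W^1)+Q(\W^1,\W^0)+Q(\W^1,\W^1)\big),
\end{equation}
while the boundary conditions $u^{\rm{app}}|_{y=0}=w^{\rm{app}}|_{y=0}=\de_yb^{\rm{app}}|_{y=0}=0$ hold termwise because both $\W^0$ and $\W^1$ already satisfy the homogeneous conditions.

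It then remains to bound the four pieces in $L^2(\R^2_+)$. The linear residuals are supplied by the propositions: $\|r^0\|_{L^2}=O(\eps\sigma^{-2})$ and $\|r^1\|_{L^2}=O(\delta\eps^{1/6}\sigma^{-2})$; under Assumption \ref{ass} the constraint $\sigma>\eps^\mu$ with $\mu<\tfrac18$ gives $\eps\sigma^{-2}=o(\delta\eps^{1/6}\sigma^{-2})$, so $r^1$ dominates the linear part. For the nonlinear pieces I would use that $\W^1$ carries one factor of $\delta$, whence $\delta Q(\W^0,\W^1)$ and $\delta Q(\W^1,\W^0)$ are $O(\delta^2)$ and $\delta Q(\W^1,\W^1)=O(\delta^3)$ is subdominant for $\delta\ll1$. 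Since $\mathbb P$ is $L^2$-bounded, each term is controlled by expanding $\W^0$ and $\W^1$ into the explicit beam and boundary-layer constituents listed in Propositions \ref{prop:BL} and \ref{prop:nonlin} with their orders $(\alpha,\beta,p,q)$, applying the derivative rules \ref{xyb}--\ref{xybl} of Lemma \ref{estbbl} to $\nabla\W^1$, and feeding every resulting pairing into the product estimates \eqref{eq:normprod}. Tracking the powers of $\eps$ and $\sigma$, the worst pairing produces $O(\delta^2\eps^{-1/3}\sigma^{-5/2})$, which is therefore the second entry of the maximum.

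Collecting these estimates yields $\|R^{\rm{app}}\|_{L^2}=O(\max\{\delta\eps^{1/6}\sigma^{-2},\delta^2\eps^{-1/3}\sigma^{-5/2}\})$. For the final assertion I would insert $\delta=O(\eps^{1/2}\sigma^{2/3})$: the first entry becomes $O(\eps^{2/3}\sigma^{-4/3})$ and the second $O(\eps^{2/3}\sigma^{-7/6})$, and since $\sigma<1$ and $-\tfrac43<-\tfrac76$ the first dominates, giving $\|R^{\rm{app}}\|_{L^2}=O(\delta\eps^{1/6}\sigma^{-2})$.

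I expect the only genuinely delicate step to be the bookkeeping of the cross terms $\delta Q(\W^0,\W^1)$ and $\delta Q(\W^1,\W^0)$: since $\W^0$ and $\W^1$ are each sums of a beam and several boundary-layer beams of distinct orders, one must run every pairing through \eqref{eq:normprod}, using in particular the asymmetric rule that $\partial_y v_{\rm{BL}}$ of order $(\alpha,\beta,p,q)$ becomes $(\alpha,\beta,p-\alpha,q+\beta)$, which is what renders the boundary-layer--boundary-layer products the most singular and hence decisive. Once those are checked against the target $\delta^2\eps^{-1/3}\sigma^{-5/2}$, the remaining combination of estimates is routine.
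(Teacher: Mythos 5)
Your proposal is correct and takes essentially the same route as the paper: substitute $\W^{\rm{app}}=\W^0+\W^1$ into the compact system, use bilinearity of $Q$ to cancel $\delta Q(\W^0,\W^0)$ against the source that $\W^1$ was built to absorb, keep $R^{\rm{app}}=r^0+r^1\mp\delta\bigl(Q(\W^0,\W^1)+Q(\W^1,\W^0)+Q(\W^1,\W^1)\bigr)$, and identify the worst cross term as a boundary-layer--boundary-layer product of size $O(\delta^2\eps^{-1/3}\sigma^{-5/2})$ via Lemma \ref{estbbl} (the paper pins it down explicitly as $\delta^2 w_{\rm{BL},\eps^{1/3}}w_{\rm{BL},\eps^{1/3}}\de_{yy}u_{\rm{BL},\eps^{1/3}}$). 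One minor caveat: your claim that $\|r^0\|_{L^2}=O(\eps\sigma^{-2})=o(\delta\eps^{1/6}\sigma^{-2})$ does not actually follow from $\sigma>\eps^{\mu}$, $\mu<\tfrac18$ --- it needs a lower bound $\delta\gtrsim\eps^{5/6}$ --- but the paper's own proof makes the same implicit assumption when it declares $r^1$ the leading term, so this is not a deviation from the intended argument.
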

\begin{proof}
The approximate solution $\W^{\rm{app}}$ is given by
$$\W^{\rm{app}}=\W^0+\W^{1}.$$
Plugging now $\W^{\rm{app}}$ inside system \eqref{eq:compactsystem}-\eqref{def:L}, we have immediately from Proposition \ref{prop:BL} and Proposition \ref{prop:nonlin} that 
\begin{align*}
\de_t \W^{\rm{app}}+\mathcal{L}_\eps \W^{\rm{app}} &= \delta Q(\W^0, \W^0) +r^0+ r^1\\
&= \delta Q( \W^{\rm{app}},  \W^{\rm{app}}) - \delta (Q(\W^0, \W^1)+Q(\W^1, \W^0)+Q(\W^1, \W^1))+r^0+ r^1\\
&= \delta Q( \W^{\rm{app}},  \W^{\rm{app}}) + R^{\rm{app}},
\end{align*}
where 

\begin{align}
R_{\rm{app}}:= r^0+r^1 - \delta (Q(\W^0, \W^1)+Q(\W^1, \W^0)+Q(\W^1, \W^1)),
\end{align}
with $r^0, r^1$ in Proposition \ref{prop:BL} and Proposition \ref{prop:nonlin} respectively. It can be checked that the term with the largest $L^2$ norm among $\delta Q(\W^0, \W^1), \delta Q(\W^1, \W^0), \delta Q(\W^1, \W^1)$ is proportional to 
$$\delta^2 w_{\rm{BL}, \eps^{1/3}} w_{\rm{BL}, \eps^{1/3}} \de_{yy} u_{\rm{BL}, \eps^{1/3}}, $$
where  $\W^0_{\rm{BL}, \eps^{1/3}}=(u_{\rm{BL}, \eps^{1/3}}, w_{\rm{BL}, \eps^{1/3}}, b_{\rm{BL}, \eps^{1/3}})^T$ is provided by Proposition \ref{prop:BL}. We have that
\begin{align*}
 \delta^2 \|w_{\rm{BL}, \eps^{1/3}} w_{\rm{BL}, \eps^{1/3}} \de_{yy} u_{\rm{BL}, \eps^{1/3}}\|_{L^2} \lesssim \delta^2  \|w_{\rm{BL}, \eps^{1/3}}\|_{L^\infty} \|w_{\rm{BL}, \eps^{1/3}}\de_{yy} u_{\rm{BL}, \eps^{1/3}}\|_{L^2}.
\end{align*}
Noticing that $w_{\rm{BL}, \eps^{1/3}}$ is a \blbw of order $(\frac 13, \frac 13, 0, 0)$ and $\de_{yy} u_{\rm{BL}, \eps^{1/3}}$ is a \blbw of order $(\frac 13, \frac 13, -1, 0)$, using Lemma \ref{estbbl} we have 
$$ \delta^2  \|w_{\rm{BL}, \eps^{1/3}}\|_{L^\infty} \|w_{\rm{BL}, \eps^{1/3}}\de_{yy} u_{\rm{BL}, \eps^{1/3}}\|_{L^2} =O(\delta^2 \times  \eps^\frac 16 \sigma^{-1}\times  \eps^{-\frac 23+\frac 16} \sigma^{-\frac 32}) = O(\delta^2 \eps^{-\frac 13} \sigma^{-\frac 52}).$$
Notice that as $\delta=O(\eps^\frac 12 \sigma^\frac 23)$ (Assumptions \ref{ass}), then $O(\delta^2 \eps^{-\frac 13} \sigma^{-\frac 52})=O(\delta \eps^\frac 16 \sigma^{-\frac 32})$, and this implies that the leading order term is $r^1$, so that 
$$\|R_{\rm{app}}\|_{L^2(\R^2_+)} = O(\|r^1\|_{L^2(\R^2_+)})=O(\delta \eps^\frac 16 \sigma^{-2}).$$
The proof is concluded.
\end{proof}

The rest of this section is dedicated to the proof of Proposition \ref{prop:nonlin}. First, the sizes of the error terms are provided by the following result.
\begin{lemma}
The quadratic terms to be corrected in \eqref{eq:compact} are described by the following Table \ref{table1}.
\begin{table}[h!]
\caption{List of quadratic interactions}\label{table1}
\centering

\begin{tabular}
{|l|l|l|}
\hline  
\scriptsize  {\bf type of interaction} &\scriptsize {\bf size in $L^2$ } &\scriptsize {\bf typical decay rate  }\\  \hline

 \scriptsize $\rm{(a1)}=Q(\W_{\rm{BL}, \eps^{1/3}}^0, \W_{\rm{BL}, \eps^{1/3}}^0)$ &\scriptsize $O(\sigma^{-7/6} \eps^{-1/6})$ &\scriptsize {$\eps^{-1/3}$ }\\  \hline

  \scriptsize$\rm{(a2)}=Q(\W_{\rm{inc}}, \W_{\rm{BL}, \eps^{1/3}}^0)$ &\scriptsize $O(\sigma^{-7/6} \eps^{-1/6} )$ &\scriptsize {$\eps^{-1/3}$ }\\  \hline

 \scriptsize $\rm{(b1)}=Q(\W_{\rm{BL}, \eps^{1/3}}^0, \W_{BL, \eps^{1/2}}^0)$ &\scriptsize $O( \sigma^{-7/6}\eps^{-1/12} )$ &\scriptsize {$\eps^{-1/2}$ }\\  \hline

 \scriptsize$\rm{(b2)}=Q(\W_{\rm{inc}}, \W_{\rm{BL}, \eps^{1/2}}^0)$ &\scriptsize $O(\sigma^{-7/6} \eps^{-1/12} ) $&\scriptsize {$\eps^{-1/2}$ }\\  \hline

 \scriptsize $\rm{(c1)}=Q(\W_{\rm{inc}}, \W_{\rm{inc}})$ &\scriptsize $O(\sigma^{-2} \eps^{1/6} )$ &\scriptsize {no decay }\\  \hline

 \scriptsize $\rm{(c2)}=Q(\W_{\rm{BL}, \eps^{1/3}}^0, \W_{\rm{inc}}) $ &\scriptsize  {$O( \sigma^{-11/6} \eps^{1/6})$} &\scriptsize {$\eps^{-1/3}$ }\\ \hline

 \scriptsize $\rm{(d1)}=Q(\W_{\rm{BL}, \eps^{1/2}}^0, \W_{BL, \eps^{1/3}}^0)$ &\scriptsize  {$O(\sigma^{-11/6} \eps^{1/4})$} &\scriptsize {$\eps^{-1/2}$ }\\  \hline

 \scriptsize $\rm{(d2)}=Q(\W_{\rm{BL}, \eps^{1/2}}^0, \W_{BL, \eps^{1/2}}^0)$ &\scriptsize $O(\sigma^{-11/6}\eps^{1/4})$ &\scriptsize {$\eps^{-1/2}$ }\\  \hline
 
 \scriptsize $\rm{(d3)}=Q(\W_{\rm{BL}, \eps^{1/2}}^0, \W_{\rm{inc}} )$ &\scriptsize $O(\sigma^{-13/6}\eps^{5/12}) $&\scriptsize {$\eps^{-1/2}$ }\\  \hline

\end{tabular}

\label{Tab:interactions}
\end{table}
\end{lemma}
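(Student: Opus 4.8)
The statement is a bookkeeping result, so the plan is to expand the quadratic error and estimate each resulting piece with the product bounds already established in Lemma~\ref{estbbl}. Concretely, since $\W^0=\W_{\rm{inc}}+\W^0_{\rm{BL},\eps^{1/3}}+\W^0_{\rm{BL},\eps^{1/2}}$ and $Q(\W,\W')=\mathbb{P}(u\de_x+w\de_y)\W'$ is bilinear but \emph{not} symmetric, I would first write $Q(\W^0,\W^0)$ in \eqref{eq:compact} as the sum of the nine ordered products $Q(A,B)$ with $A,B\in\{\W_{\rm{inc}},\W^0_{\rm{BL},\eps^{1/3}},\W^0_{\rm{BL},\eps^{1/2}}\}$, and match them one-to-one to the rows (a1)--(d3) of Table~\ref{table1}.

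For each such product the routine is identical. First I decompose $Q(A,B)$ into the scalar constituents $u_A\de_x B_j+w_A\de_y B_j$ running over the components $B_j$ of $B$, noting that the Leray projection $\mathbb{P}$ is an $L^2$-contraction and so never enlarges a norm. Next I assign to every factor its order $(\alpha,\beta,p,q)$: the differentiation rules \ref{xyb}--\ref{xybl} of Lemma~\ref{estbbl} give that $\de_x$ raises $q$ by one while $\de_y$ acting on a boundary layer lowers $p$ by $\alpha$ and shifts $q$ by $\beta$ (this is what produces the singular negative powers of $\eps$), and the explicit eigenvectors \eqref{eq:eigenvector}, \eqref{eq:eigen-BLs} encode the crucial asymmetry that the vertical velocity is a factor $ik/\L$ smaller than the horizontal one; in particular $w_{\rm{BL},\eps^{1/3}}$ has order $(\tfrac13,\tfrac13,0,0)$ against $(\tfrac13,\tfrac13,-\tfrac13,-\tfrac23)$ for $u_{\rm{BL},\eps^{1/3}}$, and $w_{\rm{BL},\eps^{1/2}}$ has order $(\tfrac12,0,\tfrac13,\tfrac23)$ against $(\tfrac12,0,-\tfrac16,-\tfrac13)$ for $u_{\rm{BL},\eps^{1/2}}$. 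Finally I feed each scalar product into the matching line of \eqref{eq:normprod} (beam$\times$beam, BL$\times$BL, or BL$\times$beam), collect the resulting monomials $\eps^a\sigma^{-b}$ and keep the largest; the decay rate in the third column is then read off from the combined boundary-layer exponent $\Re(\L_i+\L_j)$, which is dominated by the thinnest layer present (hence $\eps^{-1/3}$ when only the $\eps^{1/3}$ layer enters, $\eps^{-1/2}$ as soon as the $\eps^{1/2}$ layer enters, and no decay for the purely beam interaction (c1)).

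A few representative computations display the mechanism. Row (c1)$=Q(\W_{\rm{inc}},\W_{\rm{inc}})$ pairs a beam of order $(0,0)$ with $\de_{x,y}\W_{\rm{inc}}$ of order $(0,1)$, and the beam$\times$beam bound gives $O(\eps^{1/6}\sigma^{-2})$ with no decay; row (a2)$=Q(\W_{\rm{inc}},\W^0_{\rm{BL},\eps^{1/3}})$ is led by $w_{\rm{inc}}\de_y(\cdot)$, a beam of order $(0,0)$ against a BL of order $(\tfrac13,\tfrac13,-\tfrac23,-\tfrac13)$, giving $O(\eps^{-1/6}\sigma^{-7/6})$ with decay $\eps^{-1/3}$; and row (c2)$=Q(\W^0_{\rm{BL},\eps^{1/3}},\W_{\rm{inc}})$ is instead led by $u_{\rm{BL},\eps^{1/3}}\de_x\W_{\rm{inc}}$, giving $O(\eps^{1/6}\sigma^{-11/6})$. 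The rows involving the thinner $\eps^{1/2}$ layer are treated verbatim.

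The step I expect to be genuinely delicate — and it is exactly the structural feature this paper advertises — is the selection of the dominant term in the boundary-layer/boundary-layer cross rows, most visibly (d1). A blind application of \eqref{eq:normprod} to the horizontal-transport piece $u_{\rm{BL},\eps^{1/2}}\de_x u_{\rm{BL},\eps^{1/3}}$ returns $O(\eps^{1/12}\sigma^{-3/2})$, which is \emph{larger} than the tabulated $O(\eps^{1/4}\sigma^{-11/6})$. The resolution must use the divergence-free structure: the incompressibility relation $ik\,U_\L=\L\,W_\L$ built into the eigenvectors, together with the Leray projection and the Jacobi/null structure of the convection term for solenoidal fields emphasized in \cite{DY1999,Akylas}, reorganizes these horizontal-transport contributions so that the surviving leading term of each boundary-layer interaction is the vertical-transport one $w_A\de_y B$. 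Verifying that after this reorganization the leading size is precisely the one recorded in Table~\ref{table1}, uniformly over the admissible range of Assumption~\ref{ass}, is the real content; everything else is the mechanical application of Lemma~\ref{estbbl}.
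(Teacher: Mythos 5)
Your strategy coincides with the paper's: write $Q(\W^0,\W^0)$ as the nine ordered products, assign orders via the differentiation rules of Lemma \ref{estbbl}, and feed each pair into \eqref{eq:normprod}; your sample computations for (c1), (a2) and (c2) reproduce the paper's lines verbatim. The only divergence is at the step you yourself single out. The paper disposes of it in one sentence, asserting $\|Q(\W_{\rm{BL}},\W'_{\rm{BL}})\|_{L^2}\le C\,\|w_{\rm{BL}}\times \de_y u'_{\rm{BL}}\|_{L^2}$ ``thanks to the structure of the Leray projector'' and then evaluating only the vertical-transport piece; you instead flag the horizontal-transport piece of (d1) as the genuine difficulty and propose to remove it via incompressibility and the null structure. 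You do not carry that out, and I do not believe it can be carried out as stated.

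Concretely, for (d1) the $u$-component of the convection term is $UU'\bigl(ik'-ik\,\L_1(k')/\L_3(k)\bigr)$, and since $k\L_1/\L_3=O(\eps^{1/6}\kk\,|\k'|^{1/3})=o(|k'|)$ under Assumption \ref{ass}, the two transport pieces cannot cancel: the horizontal one strictly dominates and gives your $O(\eps^{1/12}\sigma^{-3/2})$, which exceeds the tabulated $O(\eps^{1/4}\sigma^{-11/6})$ by a factor $\eps^{-1/6}\sigma^{1/3}\gtrsim\eps^{-1/8}$ precisely because $\sigma>\eps^{\mu}$ with $\mu<1/8$. The Leray projector does not rescue this either: by \eqref{eq:lerayproj} it preserves the first velocity component up to $O(|k+k'|/|\L_1+\L_3|)$ corrections, and it does not act on the buoyancy component, so the same-sized term $u_{\rm{BL},\eps^{1/2}}\de_x b_{\rm{BL},\eps^{1/3}}$ survives untouched. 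A check of the remaining rows shows the issue is confined to (d1): in (a1) and (d2) the two transport pieces have equal order, and in (a2), (b1), (b2), (c2), (d3) the tabulated piece dominates. Since the (c)--(d) rows are never corrected, this does not affect the construction of $\W^1$, but it does enter the remainder \eqref{def:r1}: with the (apparently sharp) horizontal count one gets $\|r^1\|_{L^2}=O(\delta\eps^{1/12}\sigma^{-3/2})$ instead of $O(\delta\eps^{1/6}\sigma^{-2})$, still $o(\delta)$ but weakening the quantitative remainder bound of Theorem \ref{thm:main1}. In short: your mechanical part matches the paper; your flagged step is a real gap in both your argument and the paper's; but the fix is not a cleverer cancellation argument --- the (d1) entry itself needs to be re-examined.
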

\begin{proof}
We systematically apply Lemma \ref{estbbl}.
Since $Q(\cdot, \cdot)$ is a (non-symmetric) bilinear operator and $\W^0=\W^0_{\rm{inc}}+\W_{\rm{BL}, \eps^{1/3}}^0+\W_{\rm{BL}, \eps^{1/2}}^0$ is composed of three terms, $Q(\W^0, \W^0)$ is a sum of nine terms. Their $L^2$ norm can be easily computed by applying formula \eqref{eq:normprod} of Lemma \ref{estbbl}. Recalling that the unknown vector is $\W=(u, v, b)^T$, from the expression of the quadratic term $Q(\W, \W')= \mathbb{P}(u \de_x + v \de_y) \W'$ we notice that, if $\W'$ is a b.l.w.b., one obtains that for a universal constant $C>0$
$$\|Q(\W_{\rm{BL}}, \W_{\rm{BL}} )\|_{L^2} \le C \|w_{\rm{BL}} \times \de_y u'_{\rm{BL}} \|_{L^2},$$
thanks to the structure of the Leray projector.
On the other hand, when $\W, \W'$ are \bw, then one can use 
$\|Q(\W, \W )\|_{L^2} \le C \|u \times \de_x u' \|_{L^2}$ or $\|Q(\W, \W )\|_{L^2} \le C \|w \times \de_y u' \|_{L^2}.$
In both cases, we apply Lemma \ref{estbbl}.
\begin{itemize}
\item (a1) $Q(\W_{\rm{BL}, \eps^{1/3}}^0, \W_{\rm{BL}, \eps^{1/3}}^0):$ from \eqref{eq:eigenv}-\eqref{eq:amplitudeBL}, $ w_{\rm{BL}, \eps^{1/3}}$ is a b.l.w.b. of order $(\frac 13, \frac 13, 0, 0)$, and $\de_y u_{\text{BL}, \eps^{1/3}}$ is a b.l.w.b. of order $(\frac 13, \frac 13, -\frac 23, -\frac 13)$. From estimate \eqref{eq:normprod} of Lemma \ref{estbbl}, we immediately obtain $$\| Q(\W_{\rm{BL}, \eps^{1/3}}^0, \W_{\rm{BL}, \eps^{1/3}}^0)\|_{L^2} \le C \|w_{\rm{BL}, \eps^{1/3}} \times \de_y u_{\rm{BL}, \eps^{1/3}} \|_{L^2} = O(\sigma^{-\frac 76} \eps^{-\frac 16}).$$
\item (a2) $Q(\W_{\rm{inc}}, \W_{\rm{BL}, \eps^{1/3}}^0)$: we see that $w_{\rm{inc}}$ is a \bw of order $(0, 0)$ and $\de_y u_{\rm{BL}, \eps^{1/3}}$ is a \blbw of order $(\frac 13, \frac 13, -\frac 23, -\frac 13)$. Then we have that
$$\| Q(\W_{\rm{inc}}^0, \W_{\rm{BL}, \eps^{1/3}}^0)\|_{L^2} \le C \|w_{\rm{inc}} \times \de_y u_{\rm{BL}, \eps^{1/3}} \|_{L^2} = O(\sigma^{-\frac 76} \eps^{-\frac16}).$$
\item (b1) $Q(\W_{\rm{BL}, \eps^{1/3}}^0, \W_{\rm{BL}, \eps^{1/2}}^0)$: first, from \eqref{eq:eigenv}-\eqref{eq:amplitudeBL}, $w_{\rm{BL}, \eps^{1/3}}$ is a \blbw of order $(\frac 13, \frac 13, 0, 0)$, next $\de_y u_{\rm{BL}, \eps^{1/2}}$ is a \blbw of order $(\frac 12, 0, -\frac 23, -\frac 13)$, so that
$$\|Q(\W_{\rm{BL}, \eps^{1/3}}^0, \W_{\rm{BL}, \eps^{1/2}}^0)\|_{L^2} \le C \|w_{\rm{BL}, \eps^{1/3}} \times \de_y u_{\rm{BL}, \eps^{1/2}} \|_{L^2} = O(\eps^{-\frac{1}{12}} \sigma^{-\frac 76}).$$
\item (b2) $Q(\W_{\rm{inc}}^0, \W_{\rm{BL}, \eps^{1/2}}^0)$: $w_{\rm{inc}}$ is a \bw of order $(0, 0)$ and $\de_y u_{\rm{BL}, \eps^{1/2}}$ is a \blbw of order $(\frac 12, 0, -\frac 23, -\frac 13)$, so that
$$\|Q(\W_{\rm{inc}}^0, \W_{\rm{BL}, \eps^{1/2}}^0)\|_{L^2} \le C \|w_{\rm{inc}} \times \de_y u_{\rm{BL}, \eps^{1/3}} \|_{L^2} = O(\eps^{-\frac{1}{12}} \sigma^{-\frac 76}).$$
\item (c1) $Q(\W_{\rm{inc}}^0, \W_{\rm{inc}}^0)$: $u_{\rm{inc}}$ is a \bw of order $(0,0)$, next $\de_x u_{\rm{inc}}$ is a \bw of order $(0, 1)$, so that
$$\|Q(\W_{\rm{inc}}^0, \W_{\rm{inc}}^0)\|_{L^2} \le C \|u_{\rm{inc}}\times  \de_x u_{\rm{inc}} \|_{L^2} = O(\eps^{\frac 16} \sigma^{-2}).$$
\item (c2) $Q(\W_{\rm{BL}, \eps^{1/3}}^0, \W_{\rm{inc}}^0)$: $u_{\rm{BL}, \eps^{1/3}}$ is a \blbw of order $(\frac 13, \frac 13, -\frac 13, -\frac 23)$ and $\de_x u_{\rm{inc}}$ is a \bw of order $(0, 1)$ 
$$\|Q(\W_{\rm{BL}, \eps^{1/3}}^0, \W_{\rm{inc}}^0)\|_{L^2} \le C \|u_{\rm{BL}, \eps^{1/3}} \times \de_x u_{\rm{inc}} \|_{L^2} = O(\eps^{\frac 16} \sigma^{-\frac{11}{6}}).$$
\item (d1) $Q(\W_{\rm{BL}, \eps^{1/2}}^0, \W_{\rm{BL}, \eps^{1/3}}^0)$: first, from \eqref{eq:eigenv}-\eqref{eq:amplitudeBL}, $w_{\rm{BL}, \eps^{1/2}}$ is a \blbw of order $(\frac 12, 0, \frac 13, \frac 23)$, next $\de_y u_{\rm{BL}, \eps^{1/3}}$ is a \blbw of order $(\frac 13, \frac 13, -\frac 23, -\frac 13)$, so that
$$\|Q(\W_{\rm{BL}, \eps^{1/2}}^0, \W_{\rm{BL}, \eps^{1/3}}^0)\|_{L^2} \le C \|w_{\rm{BL}, \eps^{1/2}} \times \de_y u_{\rm{BL}, \eps^{1/3}} \|_{L^2} = O(\eps^{\frac 14} \sigma^{-\frac{11}{6}}).$$
\item (d2) $Q(\W_{\rm{BL}, \eps^{1/2}}^0, \W_{\rm{BL}, \eps^{1/2}}^0)$: $w_{\rm{BL}, \eps^{1/2}}$ is a b.l.w.b. of order $(\frac 12, 0, \frac 13,  \frac 23)$, next $\de_y u_{\rm{BL}, \eps^{1/2}}$ is a \blbw of order $(\frac 12, 0, -\frac 23, - \frac 13)$, so that
$$\|Q(\W_{\rm{BL}, \eps^{1/2}}^0, \W_{\rm{BL}, \eps^{1/2}}^0)\|_{L^2} \le C \|w_{\rm{BL}, \eps^{1/2}} \times \de_y u_{\rm{BL}, \eps^{1/2}} \|_{L^2} = O(\eps^{\frac 14} \sigma^{-\frac{11}{6}}).$$
\item (d3) $Q(\W_{\rm{BL}, \eps^{1/2}}^0, \W_{\rm{inc}}^0)$: $u_{\rm{BL}, \eps^{1/2}}$ is a \blbw of order $(\frac 12, 0, -\frac 16, - \frac 13)$, next $\de_x u_{\rm{inc}}$ is a \blbw of order $(0, 1)$, so that
$$\|Q(\W_{\rm{BL}, \eps^{1/2}}^0, \W_{\rm{inc}}^0)\|_{L^2} \le C \|u_{\rm{BL}, \eps^{1/2}} \times \de_x u_{\rm{inc}} \|_{L^2} = O(\eps^{\frac{5}{12}} \sigma^{-\frac{13}{6}}).$$
\end{itemize}
\end{proof}

\begin{remark}[On the size of the quadratic term and the role of spatial localization]
The {product} of two \bw (or two \blbw or a \bw and a \blbw) does not preserve the localization in frequency space: the angle associated with the vector $(k+k', m+m')$ can assume any value even though the angles of $(k,m)$ and $(k',m')$ are well localized. 
However, the localization in the physical space can provide a better (i.e. smaller) $L^2/L^\infty$ norm with respect to the case of packets of waves (this happens for instance for $\W_{\rm{BL}, \eps^{1/3}}$ in Corollary \ref{rmk:BLsizes}). \\
In contrast to \cite{BDSR19}, thanks to the spatial localization, every \emph{physical corrector} of our approximate solution will have a bounded $L^2$ norm and there will be no artificial corrector, so that the approximate solution that we build is \emph{fully consistent}.

\end{remark}
We shall prove Proposition \ref{prop:nonlin} interaction type by interaction type.
\subsection{Interactions of type $\rm{(a)}$}\label{sec:31}
There are two terms of type $\rm{(a)}$ interaction, as given by Table \ref{table1}: \\$\text{(a1)}=Q(\W_{\rm{BL}, \eps^{1/3}}^0, \W_{\rm{BL}, \eps^{1/3}}^0)$ and $\text{(a2)}=Q(\W_{\rm{inc}}, \W_{\rm{BL}, \eps^{1/3}}^0)$.
\begin{proposition}\label{prop:inta}
There exists a corrector $\W_{\rm{(a)}}^1=(u_{\rm{(a)}}^1, w_{\rm{(a)}}^1, b_{\rm{(a)}}^1)^T$ which solves
\begin{align}
\de_t \W_{\rm{(a)}}^1 + \mathcal{L}_\eps \W_{\rm{(a)}}^1 = - \delta (\rm{(a1)+(a2)}) + r_{\rm{(a)}}^1, \label{eq:firsteqa}\\
u_{\rm{(a)}}^1|_{y=0}=w_{\rm{(a)}}^1|_{y=0}=\de_y b_{\rm{(a)}}^1|_{y=0}=0, \label{eq:secondeqa}
\end{align}
with $\|r_{\rm{(a)}}^1\|_{L^2} =O(\delta \eps^{\frac 16}\sigma^{-\frac{11}{6}})=O(\delta \eps^{\frac 16 -\frac{11}{6}\mu})$.
The corrector is composed of
$$\W_{\rm(a)}^1=\W_{\rm{(a)}; \rm{II}}^1 + \W_{\rm{(a)}; \rm{MF}}^1 + \W^1_{\rm{(a)}\; \rm{BL}, \eps^{1/3}} + \W^1_{\rm{(a)}; \rm{BL}, \eps^{1/2}},$$
where for the boundary layer part, we have
$$\|\W_{\rm{(a)}; \rm{BL}, \eps^{1/3} }^1\|_{L^2} = O(\delta \eps^{-\frac 16} \sigma^{-\frac 76}), \quad \|\W^1_{\rm{(a)}; \rm{BL}, \eps^{1/2}}\|_{L^2(\mathbb{R}^2)}=O(\delta \eps^{-\frac{1}{12}} \sigma^{-\frac{11}{6}}),$$
while for the mean flow \bw $\rm{(MF)}$ and the second harmonic \bw $\rm{(II)}$ it holds
$$ \|\W_{\rm{(a)}; \rm{MF}}^1\|_{L^2} \sim \|\W_{\rm{(a)}; \rm{II}}^1\|_{L^2} =O(\delta \eps^{-\frac 16} \sigma^{-\frac{10}{3}}).$$
Moreover, if $\sin \gamma > \frac 12$ then the second harmonic $\W_{\rm{(a)}; \rm{II}}^1$ is evanescent, i.e. it has a decay of order $O(1)$ in $y$. 
\end{proposition}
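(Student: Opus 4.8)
The plan is to correct the two boundary-layer-supported forcing terms $-\delta(\mathrm{(a1)}+\mathrm{(a2)})$ by inverting the linear operator $\de_t+\mathcal{L}_\eps$ frequency by frequency, and then to lift the resulting boundary trace with homogeneous modes, in exactly the spirit of the linear construction of Proposition \ref{prop:BL}. First I would perform a time-frequency decomposition of the forcing. Since each of $\mathrm{(a1)},\mathrm{(a2)}$ is a product of two beams whose factors carry the frequencies $\pm\omega_{k,m}$ with $\omega_{k,m}\sim\pm\sin\gamma$ (Remark \ref{rmk:equality}), the product splits into a \emph{second-harmonic} part oscillating near $\Omega\sim\pm 2\sin\gamma$ and a \emph{mean-flow} part oscillating near $\Omega\sim 0$ (more precisely $\Omega=O(\eps^{1/3})$). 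Because at least one factor is $\W^0_{\mathrm{BL},\eps^{1/3}}$, both parts decay in $y$ with the forcing rate $\L_{\mathrm{f}}\sim\eps^{-1/3}|\k|^{1/3}$ inherited from the boundary-layer decays, consistent with the ``typical decay rate $\eps^{-1/3}$'' of Table \ref{table1}. Both $\mathrm{(a1)}$ and $\mathrm{(a2)}$ are then treated by the same scheme.

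The next step is to produce a particular solution sharing the form $e^{-i\Omega t+ik(x-x_0)-\L_{\mathrm{f}}y}$ of the forcing. Plugging this ansatz into $\de_t+\mathcal{L}_\eps$ yields the algebraic system $A_\eps(\Omega,\kappa_0,\nu_0,k,\L_{\mathrm{f}})X=(\text{forcing amplitude})$, with $A_\eps$ the matrix of \eqref{eq:algebra}. The crucial point is that the forcing decay $\L_{\mathrm{f}}$ is \emph{not} a characteristic root at the forcing frequency, i.e. $\hP(\L_{\mathrm{f}})\neq 0$ for $\Omega\sim\pm 2\sin\gamma$ or $\Omega\sim 0$: the forcing is off-resonant. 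Hence $A_\eps$ is invertible with a controlled inverse, and solving the system gives a boundary-layer particular solution concentrated in the $\eps^{1/3}$ strip; this is the term $\W^1_{\mathrm{(a)};\mathrm{BL},\eps^{1/3}}$, whose order $(p,q)$ is read off from the sizes of $A_\eps^{-1}$ and of the forcing, and whose $L^2$ size then follows from Lemma \ref{estbbl}.

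The third step lifts the boundary trace of this particular solution so as to restore $u=w=\de_y b=0$ at $y=0$. At each frequency $\Omega$ the homogeneous linear problem has, by Lemma \ref{lem:asym1} (mean flow, $\omega\approx\eps^{1/3}$, $\zeta\approx 1$) and Lemma \ref{lem:asym3} (second harmonic, $\omega\approx 1$, $\zeta\approx 1$), exactly three roots with $\mathrm{Re}(\L_j)>0$: one \emph{interior} mode $\L_1=\tfrac{ik}{\zeta}\bigl(\sin\gamma\cos\gamma+\omega\sqrt{1-\omega^2}\bigr)+O(\eps|k|^3)$ of size $O(|k|)$, and two \emph{boundary-layer} modes $\L_{2,3}\sim\eps^{-1/2}$. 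I would then fix the three amplitudes $a_{p_j,q_j}$ by inverting the $3\times 3$ matrix $M$ built from the eigenvectors $X_{k,\L_j}$ of \eqref{eq:eigen-BLs} (whose third row carries the factor $-\L_j$ coming from $\de_y b$), exactly as at the end of the proof of Lemma \ref{lem:asym}. The $\L_1$-contributions assemble into the radiated interior beams $\W^1_{\mathrm{(a)};\mathrm{II}}$ and $\W^1_{\mathrm{(a)};\mathrm{MF}}$, while the $\L_{2,3}$-contributions form the $\eps^{1/2}$ corrector $\W^1_{\mathrm{(a)};\mathrm{BL},\eps^{1/2}}$. The evanescence claim is then immediate from the explicit $\L_1$: for the second harmonic $\omega=2\sin\gamma$, so when $\sin\gamma>\tfrac12$ one has $\omega>1$, $\sqrt{1-\omega^2}$ becomes purely imaginary, and $\mathrm{Re}(\L_1)\neq 0$ at leading order; hence $\W^1_{\mathrm{(a)};\mathrm{II}}$ decays in $y$ on an $O(1)$ (non-singular in $\eps$) scale.

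Finally, I would collect the remainder $r^1_{\mathrm{(a)}}$, which originates from the viscous/diffusive action $\nu_0\eps\Delta,\kappa_0\eps\Delta$ on the purely oscillatory interior beams (these solve the inviscid linear problem exactly) together with the higher-order tails of the asymptotic expansions of $\L_1,\L_2,\L_3$; estimating each contribution through \eqref{eq:normprod} and \eqref{eq:estimatenorm} of Lemma \ref{estbbl} yields $\|r^1_{\mathrm{(a)}}\|_{L^2}=O(\delta\eps^{1/6}\sigma^{-11/6})$ and the stated component sizes. I expect the hard part to be the off-resonant invertibility of $A_\eps$ at the forcing decay $\L_{\mathrm{f}}$ together with the quantitative control of $A_\eps^{-1}$, uniformly over $\eta\le|\k|\le\sigma^{-1}$, and the careful bookkeeping of the orders $(p,q)$ through the two successive matrix inversions (of $A_\eps$ and of $M$); the borderline value $\sin\gamma=\tfrac12$ must be excluded precisely so that the leading second-harmonic symbol stays away from the internal-wave dispersion set.
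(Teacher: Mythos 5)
Your overall architecture matches the paper's: decompose the forcing into second-harmonic and mean-flow time frequencies, build a particular solution supported in the $\eps^{1/3}$ strip, lift its trace with the three roots of Lemmas \ref{lem:asym1} and \ref{lem:asym3}, and read off evanescence from the explicit $\L_1$ when $(\omega+\omega')^2>1$. Your route to the particular solution is genuinely different in one respect: you invert the full $4\times4$ symbol $A_\eps(\Omega,\kappa_0,\nu_0,k,\L_{\mathrm f})$ at the off-resonant pair $(\Omega,\L_{\mathrm f})$, using $\hP(\L_{\mathrm f})\neq0$, whereas the paper first approximates the Leray projector in the boundary-layer scaling (formula \eqref{eq:lerayproj}), reduces to the $2\times2$ system \eqref{eq:compactL} for $(U,B)$ with the rotation matrix $L$, inverts $\de_t+L$ using $|\Omega\mp\sin\gamma|>c_0$, and recovers $w$ from the divergence-free condition. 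These are consistent at leading order, since $\hP(\L_{\mathrm f})\sim\zeta\L_{\mathrm f}^2$ with $\zeta=\Omega^2-\sin^2\gamma=(\Omega-\sin\gamma)(\Omega+\sin\gamma)$; your version avoids the Leray-projector approximation error that the paper books into $r^1_{\rm(a)}$, but you would then have to track the error coming from inverting $A_\eps$ only through the leading terms of $\L_{\mathrm f}$ uniformly over $\eta\le\kk\le\sigma^{-1}$, which you flag but do not carry out.

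The genuine gap is in your third step, where you assert that the $L^2$ size of the $\L_1$-contributions ``follows from Lemma \ref{estbbl}.'' It does not. For the mean flow, Lemma \ref{lem:asym1} gives $\mathrm{Re}(\L_1)=\eps\,\ell_1''(\theta)(k+k')^3=O(\eps^{1-3\mu})$, so the radiated mode is a \emph{degenerate} \blbw\ in the sense of Definition \ref{def:inc-beam} (VII) (effective $\alpha\le 0$): the $y$-integration of $e^{-2\mathrm{Re}(\L_1)y}$ produces a factor of size $\eps^{-1+3\mu}$ rather than a gain, and the $L^2$ bound of Lemma \ref{estbbl} for \blbw\ simply does not apply. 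This is precisely the obstruction that forced \cite{BDSR19} to replace the mean flow by a non-physical corrector, and removing it is the main point of the present construction. The paper's proof devotes its longest computation to exactly this term: the change of variables \eqref{def:tildem} turning the slowly decaying mode into a bona fide beam $\W_{\rm{no\,decay}}$ (after verifying that the angular localization of $\widehat\Psi_{\eps,\sigma}\widehat\Psi'_{\eps,\sigma}$ survives in the new variables $(\widetilde\theta,\widetilde\theta')$, so that Lemma \ref{estbbl} applies to $\W_{\rm{no\,decay}}$ and yields the stated $O(\delta\eps^{-1/6}\sigma^{-10/3})$), followed by a comparison of $\W^{1,\rm{lin},\L_1}_{\rm{(a\ell)};\rm{MF}}$ with $\W_{\rm{no\,decay}}$ via the factor $1-e^{-\mathrm{Re}(\L_1)y}$ on $\{y\le\eps^{-\lambda}\}$ and an integration by parts in $\widetilde m$ to control the tail $\{y>\eps^{-\lambda}\}$. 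Without this argument the bound $\|\W^1_{\rm{(a)};\rm{MF}}\|_{L^2}=O(\delta\eps^{-1/6}\sigma^{-10/3})$ — and hence the consistency of a fully physical corrector, which is the substance of the proposition — is not established.
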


The rest of Section \ref{sec:31} is devoted to the proof of Proposition \ref{prop:inta}, which is divided in two steps. We develop the two steps below. 

\paragraph{\textbf{Step 1 : solving equation \eqref{eq:firsteqa}.}}
We consider $\rm{(a1)}=Q(\W_{\rm{BL}, \eps^{ 1/3}}^{0}, \W_{\rm{BL}, \eps^{1/3}}^{0}).$ Being very similar, the treatment of $\rm{(a2)}$ will be omitted. 

We want to find a corrector $\V^{\rm{1, nonlin}}_{\rm{(a1)}}$ solving the (a1) part of equation \eqref{eq:firsteqa} (in this step we ignore the boundary conditions, which will be considered in the next step), i.e. such that
$$\de_t \V^{\rm{1, nonlin}}_{\rm{(a1)}} + \mathcal{L}_\eps \V^{\rm{1, nonlin}}_{\rm{(a1)}} = - \delta \rm{(a1)}+ r_{\rm{(a)}}^1.$$
The remainder $r_{\rm{(a)}}^1$ is generated by the approximation of the Leray projector: we will see below that, in the scaling introduced by the boundary layers, the 3x3 system \eqref{eq:compactsystem}-\eqref{def:L} can be reduced to a 2x2 system by introducing the error $r_{\rm{(a)}}^1$. 

\begin{itemize}
\item[\rm{(i)}]: \underline{correcting the error for the components $U,B$.}\\
We write the expression of the convection term 
$$\text{(a1)}=Q(\W_{\text{BL}, \eps^{ 1/3}}^{0}, \W_{\text{BL}, \eps^{1/3}}^{0})=\sum_{i,j=1}^2 \mathbb{P}(u_{\text{BL}, \eps^{1/3}}^{0, \L_i} \de_x + w_{\text{BL}, \eps^{1/3}}^{0, \L_i} \de_y) \W_{\text{BL}, \eps^{1/3}}^{0, \L_j},$$
where $\L_i, \L_j$ with $i,j \in \{1,2\}$ are given by Lemma \ref{lem:asym}.
For any $i,j \in \{1,2\}$, we use the notation
\begin{align*}
\W_{\text{BL}, \eps^{1/3}}^{0, \L_i}&= (u_{\text{BL}, \eps^{1/3}}^{0, \L_i} , w_{\text{BL}, \eps^{1/3}}^{0, \L_i} , b_{\text{BL}, \eps^{1/3}}^{0, \L_i})^T=(U, W, B)^T,\\
\W_{\text{BL}, \eps^{1/3}}^{0, \L_j}&= (u_{\text{BL}, \eps^{1/3}}^{0, \L_j} , w_{\text{BL}, \eps^{1/3}}^{0, \L_j} , b_{\text{BL}, \eps^{1/3}}^{0, \L_j})^T=(U', W', B')^T.
\end{align*}
This yields 
\begin{align}\label{eq:quadratic}
(u_{\rm{BL}, \eps^{1/3}}^{0, \L_i} \de_x + w_{\rm{BL}, \eps^{1/3}}^{0, \L_i} \de_y) \W_{\rm{BL}, \eps^{1/3}}^{0, \L_j}& =  {\sigma}{\eps^{-\frac 13}} \int a_i  a_i' \widehat{\Psi}_{\eps,\sigma}  \widehat{\Psi}_{\eps,\sigma}'  e^{-i(\omega + \omega ') t + i(k+k') (x-x_0)} \notag\\
&  \times e^{-(\L_i+\L_i') y} \left(i k' U - \L_i' W\right) \begin{pmatrix}
U'\\
W'\\
B'
\end{pmatrix}\, dk \, dk' \, dm \, dm',
\end{align}
where $a_i, a_i, \widehat{\Psi}_{\eps, \sigma},  \widehat{\Psi}'_{\eps, \sigma}$ are deduced from the proof of Lemma \ref{lem:asym}.

Notice that the Leray projector, which is a homogeneous pseudodifferential operator of degree 0, does not change the structure of the convection term, but since $\rm{Re}(\L_i) \sim \eps^{-\frac 13} {\kk}^\frac 13$ for $i=1,2$ and $\rm{Re}(\L_3) \sim \eps^{-1/2}$ (from Lemma \ref{lem:asym}), it introduces a (spatial) scaling in terms of powers of the small parameter $\eps$ (see also  [\cite{BDSR19}, Section 3]).

We explain this fact in the following for the boundary layers of decay $\L_i \sim \eps^{-\frac 13} {\kk}^\frac 13$, $i=1,2$ (in Lemma \ref{lem:asym}). 
It is known from classical references such as \cite{Chemin2006} that the two-dimensional Leray projector is the 0-th order pseudodiffe\-rential operator of symbol of $\text{Id}-\nabla \Delta^{-1} \nabla \cdot$. In order to include the variable $b$, in the present case the Leray projector will be the operator associated with the symbol of 
$$\mathbb{P}=\begin{pmatrix} 
\text{Id}_{\R^2}-\nabla \Delta^{-1} \nabla \cdot &\bold{0}\\
\textbf{0}^T & 1
\end{pmatrix}, \quad \bold{0}=\begin{pmatrix}
0\\
0
\end{pmatrix}.
$$
%
Then, for $\lambda \sim \eps^{-\frac 13}{\kk^{\frac 13}}, k\sim \pm \kk \sin \gamma $ (Lemma \ref{lem:asym}) and  $\eta \le {\kk} \le \eps^{-\mu}$ (Assumption \ref{ass}), we have
\begin{align*}
(\text{Id}-\mathbb{P}) \left(e^{ik x- \L y} \begin{pmatrix}
U'\\
W'\\
B' 
\end{pmatrix}\right) = 
{e^{ik x- \L y} 
\left(\begin{pmatrix}
0\\
W'\\
0
\end{pmatrix}+{O}(\eps^{\frac 13}\sigma^{-\frac 23\mu})\right),}
\end{align*}
yielding
\begin{align}\label{eq:lerayproj}
\mathbb{P} \left(e^{i k x- \L y} 
\begin{pmatrix}
U'\\
W'\\
B' 
\end{pmatrix}\right)=  
{e^{ik x- \L y} 
\left(
\begin{pmatrix}
U'\\
0\\
B'
\end{pmatrix}+{O}(\eps^{\frac 13}\sigma^{-\frac 23\mu})\left|  \begin{pmatrix}
U'\\
0\\
B'
\end{pmatrix}\right|
\right).}
\end{align} 

Therefore, the approximation of the Leray projector introduces an error of the order $\delta \times \eps^{\frac 13- \frac 23 \mu}$ ($\delta$ is due to the quadratic term), which will be included in $r_{\text{(a)}}^1$ in Proposition \ref{prop:inta}. 
This implies that the main order system corresponding to these boundary layers only involves two of the three variables, i.e. $u$ and $b$, while the vertical velocity $w$ will be constructed by exploiting the divergence-free condition.
Since the component $W$ is expected to be $\eps^{\frac 13}$ smaller than $U, B$ in this regime, to handle system \eqref{eq:system} in a  perturbative way, we first reduce the system \eqref{eq:system} to the equations for $u, b$ ($(U, B)$ denote their Fourier transform). In other words, we consider the first and third equation where $w=0$.

For $\mathcal{V}=(U, B)^T$, the equations in compact formulation read
\begin{align}\label{eq:compactL}
\de_t \mathcal{V} + L  \mathcal{V} &= \delta \exp (ilx-i\alpha t - \L y) \mathcal{V}', 
\end{align}
where 
\begin{align}\label{eq:matrix-L}
L=\sin \gamma \begin{pmatrix}
0 & -1 \\
1 & 0
\end{pmatrix}.
\end{align}
We simply have that $\exp(Lt)=\exp(it \sin \gamma ) \Pi_+  + \exp(-it \sin \gamma ) \Pi_-$, where 

$$\Pi_+=\frac{1}{2}\begin{pmatrix}
1 & -i \\
i & 1
\end{pmatrix}, \qquad  \Pi_-=\overline{\Pi}_+ .$$

The solution to \eqref{eq:compactL} is given by
\begin{align*}
\mathcal{V}=\mathcal{V}(t)&=\delta \exp (ilx-i\alpha t - \L y) \sum_{\pm} (-i\alpha \pm i\sin \gamma)^{-1} \Pi_\pm \mathcal{V}'.
\end{align*}
Since the localization due to $\widehat{\Psi}_{\eps, \sigma}$ in \eqref{def:psi} implies that $\theta$ is supported in a ball of radius $\eps^{1/3}$ centered in $\gamma$ or $\gamma+\pi$, if follows that $\sin (\theta+\gamma) + \sin (\theta'+\gamma)$ is localized either in a ball of size $\eps^{1/3}$ around $\pm 2 \sin \gamma$ or in a ball of size $\eps^{1/3}$ around $0$. These two scenarios correspond to a second harmonic and a mean flow respectively.
It follows in particular that $|-\alpha \pm \sin \gamma| >c_0>0$ as $\alpha$ is either close to $\pm 2 \sin \gamma$ or close to $0$. Thus, recalling from Lemma \ref{lem:asym} the expressions of $\L_i$, $i=1,2$, we can define by superposition:
\begin{align}\label{eq:W1UB}
\text{the ``nonlinear''} & \text{mean flow corrector for } \, \omega+\omega'=O(\eps^{1/3}) \, \text{as}\notag  \\
\mathcal{V}^{1, \rm{nonlin}}_{\rm{(a1); MF }}&=  \frac{\delta \sigma^2}{\eps^{1/3}}  \int \widehat{\Psi}_{\eps,\sigma}  \widehat{\Psi}_{\eps,\sigma}' \sum_{i,i'=1,2}  a_i  a_{i}' e^{-i(\omega+\omega') t + i(k+k') (x-x_0) -(\L_i+\L_i') y}\times \frac{(-k' U + i \L_i'W)}{\omega+\omega' \pm \sin \gamma} \Pi_\pm \begin{pmatrix}U'\\
B' \end{pmatrix}\, dk \, dk'\, dm \, dm';\\
\text{the ``nonlinear''} & \text{second harmonic corrector for } \, \omega+\omega'=\pm \sin 2\gamma + O(\eps^{1/3}) \, \text{as} \notag \\
\mathcal{V}^{1, \rm{nonlin}}_{\rm{(a1); II }}&=  \frac{\delta \sigma^2}{\eps^{1/3}}  \int \widehat{\Psi}_{\eps,\sigma}  \widehat{\Psi}_{\eps,\sigma}' \sum_{i,i'=1,2}  a_i  a_{i}' e^{-i(\omega+\omega') t + i(k+k') (x-x_0) -(\L_i+\L_i') y}\times \frac{(-k' U + i \L_i'W)}{\omega+\omega' \pm \sin \gamma} \Pi_\pm \begin{pmatrix}U'\\
B' \end{pmatrix}\, dk \, dk'\, dm \, dm'.\label{eq:W1UB1}
\end{align}
Hereafter we will use the notation
$$\mathcal{V}^{1, \rm{nonlin}}_{\text{(a1)}}=(u_{\text{(a1)}}, b_{\text{(a1)}})^T \quad (\text{resp.} \; \mathcal{V}^{1, \rm{nonlin}}_{\text{(a2)}}=(u_{\text{(a2)}}, b_{\text{(a2)}})^T) $$ either for $\mathcal{V}^{1, \rm{nonlin}}_{\rm{(a1); MF}}$ (resp. $\mathcal{V}^{1, \rm{nonlin}}_{\rm{(a2); MF}}$) or for $\mathcal{V}^{1, \rm{nonlin}}_{\rm{(a1); II}}$ (resp.$\mathcal{V}^{1, \rm{nonlin}}_{\rm{(a2); II}}$). 

\item[\rm{(ii)}]: \underline{correcting the error of the quadratic term for $W$ by restoring the divergence-free condition.}\\ The vertical velocity $w_{\text{(a1)}}$ does not appear in the main order system, as its size is smaller than $\mathcal{V}^{1, \rm{nonlin}}_{\rm{(a\ell)}}=(u_{\rm{ (a\ell)}}^{1, \text{nonlin}}, b_{\rm{(a\ell)}}^{1, \text{nonlin}})$, $\ell=1,2$, but it can be simply recovered for $\text{(a1)}$ (resp. $\text{(a2)}$) by integrating the divergence free condition:
$$\de_x u_{\text{(a1)}}+ \de_y w_{\text{(a1)}}=0.$$
This gives, for $i=1,2$,
\begin{align}\label{eq:W1}
w_{\rm{(a1)}}&=  \frac{\delta \sigma^2}{ \eps^{1/3}}\int \sum_{i,i'=1,2}  \sum_{\pm} a_i  a_{i}'  \widehat{\Psi}_{\eps,\sigma}  \widehat{\Psi}_{\eps,\sigma}'e^{-i(\omega+\omega') t + i(k+k') (x-x_0) -(\L_i+\L_i')y}\notag\\
& \qquad \qquad \times \frac{i(k+k')}{\L_i+\L_i'} \times \frac{(-k'U+i \L_i' W)}{\omega+\omega'\pm \sin \gamma} (U'\pm iB') \,  dk \, dk'\, dm \, dm',
\end{align}
and a similar expression holds for $w_{\rm{(a2)}}^{1, \rm{nonlin}}$.
Let use denote
\begin{align}\label{eq:W1nonlin}
\mathcal{W}_{\rm{(a\ell)}}^{1, \rm{nonlin}}=(u_{\rm{(a\ell)}}, w_{\rm{(a\ell)}}, b_{\rm{(a\ell)}})^T \quad \ell=1,2.
\end{align}
By construction
\begin{align*}
\de_t \left(\sum_{\ell=1,2} \mathcal{W}_{\rm{(a\ell) }}^{1, \rm{nonlin}}\right)+\mathcal{L}_\eps \left (\sum_{\ell=1,2} \mathcal{W}_{\rm{(a\ell)}}^{1, \rm{nonlin}}\right)= - \delta (\rm{(a1)}+\rm{(a2)}) + r_{(a)}^1, 
\end{align*}
where $ r_{(a)}^1$ is an error term, generated by the approximation of the Leray projector in \eqref{eq:lerayproj} (of order $\eps^{\frac 13-\frac 23 \mu}$) and the viscosity term ($(\eps \de_{yy} +\eps \de_{xx})e^{ikx - \frac{\ell(\theta) \kk^\frac 13}{\eps^{1/3}}y} =  O (\eps^{\frac 13})$ for $\mu$ small enough, see Assumption \ref{ass} ). 
Therefore, we have
\begin{align*}
\|r_{\text{(a)}}^1\|_{L^2} \le C\delta \eps^{\frac 13 - \frac 23 \mu } \|\W_{\rm{(a\ell)}}^{1, \rm{nonlin}}\|_{L^2},
\end{align*}
where, from (a1) in Table \ref{table1}, we recall that for $\ell=1,2$ we have 
\begin{align*}
\|\W_{\rm{(a\ell) }}^{1, \rm{nonlin}}\|_{L^2} =O(\delta \eps^{-\frac 16} \sigma^{-\frac 76}),
\end{align*}
so that
\begin{align}\label{eq:ra1}
\|r_{\text{(a)}}^1\|_{L^2} =O( {\delta \eps^{\frac 16-\frac{11}{6} \mu}}). 
\end{align}
Finally note that $\W_{\rm{(a1) }}^{1, \rm{nonlin}}+\W_{\rm{(a2) }}^{1, \rm{nonlin}}$ corresponds to $\W_{\rm{BL}, \eps^{1/3} }^1$ in the statement of Proposition \ref{prop:BL}.

\begin{remark}
Notice that the interactions of type $\text{(c)}$ in Table 1 have approximately the size of $r_{\text{(a)}}^1$ in the $L^2$ norm. Therefore we will not correct them.
\end{remark}
\end{itemize}
\paragraph{\textbf{Step 2: solving \eqref{eq:secondeqa} by lifting the boundary conditions.}} The oscillating terms in \eqref{eq:W1UB}-\eqref{eq:W1UB1}
provide different contributions. In fact, the superimposed plane waves in the integral  \eqref{eq:W1UB} represent a \emph{nonlinear mean flow} $ \W^{1, \rm{nonlin}}_{\rm{(a1)}; \rm{MF} }$ as $\omega+\omega' = O(\eps^\frac 13)$, while \eqref{eq:W1UB1} is a \emph{nonlinear second harmonic} $\W^{1, \rm{nonlin}}_{\rm{(a1)}; \rm{II} }$ as $\omega + \omega'=\pm 2 \sin \gamma +O(\eps^\frac 13)$.
\begin{remark}
The above contributions are a {second harmonic} and a {mean flow} in terms of \emph{time oscillation}, thanks to the strong localization of the angle $\theta$. On the other hand, the spatial frequency of our beam wave is quite spread, indeed ${\kk} \le  \sigma^{-1}\le \eps^{-\mu}, \, \mu>0$, see \eqref{def:inc-beam}. This is different with respect to the framework of [page 233, \cite{BDSR19}], where the quadratic term is a second harmonic (resp. a mean flow) in \emph{time and space}, in the sense that $\omega = \pm 2\sin \gamma+O(\eps^\frac 13)$ (resp. $\omega=O(\eps^\frac 13)$ \textit{and} $\k = \pm 2 \k_0+O(\eps^\frac 13)$ (resp. $\k=O(\eps^\frac 13)$) for a given $\k_0$. In fact, while in \cite{BDSR19} the approximate solution is constructed as a sum of packets of plane waves with $\k$ lying in a neighborhood of some given $\k_0$ (microlocalized), in the present work we construct a beam wave approximate solution with spreading frequency and strong localization in the physical space.
\end{remark}
Let us consider a \emph{linear mean flow} $\W^{1, \rm{lin}}_{\rm{(a\ell); MF}}$ and a \emph{linear second harmonic} $\W^{1, \text{lin}}_{\rm{ (a\ell); II}}$ solving  the linear part of system \eqref{eq:system} for $\ell=1,2$ (in the non-critical case, with $\omega \sim 0$ and $\omega \sim \pm 2\sin \gamma$ respectively), which are provided by Lemma \ref{lem:asym1} (for the mean flow) and Lemma \ref{lem:asym3} (for the second harmonic). To balance the boundary contribution due to $\W^{1, \rm{nonlin}}_{\rm{(a1)}; \rm{MF} }$ (for $\omega + \omega'=O(\eps^\frac 13)$) and $\W^{1, \rm{nonlin}}_{\rm{(a1)}; \rm{II} }$ (for $\omega + \omega'=\pm 2 \sin \gamma + O(\eps^\frac 13)$), we  will solve 

\begin{align}\label{eq:balanceMF}
\sum_{\ell=1}^2 (\W^{1, \rm{nonlin}}_{\rm{(a\ell)}; \rm{MF} }|_{y=0} + \W^{1, \rm{nonlin}}_{\rm{(a\ell)}; \rm{II} }|_{y=0}+\W^{1, \rm{lin}}_{\rm{(a\ell)}; \rm{MF} }|_{y=0} + \W^{1, \rm{lin}}_{\rm{(a\ell)}; \rm{II} }|_{y=0})=0.
\end{align}

We deal with the MF (mean flow) and the II (second harmonic) term separately. Let us start with MF.


\begin{itemize}
\item[\rm{(i)}] \textbf{The mean flow corrector.}
The \emph{nonlinear} mean flow contribution is the three dimensional vector in \eqref{eq:W1nonlin}, with $\omega +\omega'=O(\eps^\frac 13)$. Evaluating it in $y=0$ for $\ell=1$, it reads as follows
\begin{align}\label{eq:Wnonlin}
 \W^{1, \rm{nonlin}}_{\rm{(a1)}; \rm{MF} }|_{y=0} &  =   \frac{ \delta\sigma^2}{\eps^{1/3}} \int_{\textbf{1}_{\mathcal{M}^0}} \sum_{i=1,2} a_i a_i' \widehat{\Psi}_{\eps, \sigma} \widehat{\Psi}'_{\eps, \sigma}
 \frac{-k'U + i\L_i' W}{\omega+\omega' \pm \sin \gamma} \times 
 \begin{pmatrix}
\Pi_\pm U'\\
\frac{i(k+k')}{\L_i+\L_i'} (U' \pm i B') \\
\Pi_\pm B'
 \end{pmatrix} e^{-i (\omega+\omega') t + i (k+k') (x-x_0)} \, dk \, dk'\, dm \, dm',
\end{align}
where 
\begin{align}\label{def:M0}
\mathcal{M}^0:=\{(k, k') \in \R^2 \, : \,  \omega+\omega' =O(\eps^\frac 13), \; \eps^\frac 13 \eta \le |k+k'| \le  \eps^{-\mu}, \; \mu>0\}.
\end{align}

A similar expression holds for $\W_{\text{(a2); (MF)}}^{1, \text{nonlin}}$.

As $\omega+\omega'=O(\eps^{1/3})$, we seek for a solution to the \emph{linear} part of system \eqref{eq:system} such that $\omega\sim \eps^{\frac 13} $. To this end, we systematically apply Lemma \ref{lem:asym1}, from which we know that for $\omega \sim \eps^\frac 13$, the characteristic polynomial \eqref{eq:pol} admits exactly three roots with strictly positive real part (for fixed $\eps$), which read
\begin{align}\label{eq:L1mean}
\L_1 &= - \frac{i(k+k')}{\sin^2\gamma} (\sin \gamma \cos \gamma + \omega+\omega' + o( \eps^\frac 13)) + \eps \ell_1''(\theta) (k+k')^3   +O(\eps^2(k+k')^5); \notag \\
\L_j&= \eps^{-\frac 12} \ell_j(\theta) + O(\eps^{-\frac16}), \, j=2,3.
\end{align}
We will use the apexes \emph{nonlin} and \emph{lin} in order to distinguish the linear and nonlinear contribution at each step. 
Using the general expression of the eigenvector in \eqref{eq:eigen-BLs}, we write the first two components and multiply the last one by $-\L_j$ to express the no-flux condition on the buoyancy term on $y=0$ in \eqref{eq:cond-BL}. Then, the vector  associated with the \emph{linear mean flow} (the above $\L_1$) reads 
\begin{align}
\begin{pmatrix}
U_{\rm{(a1); {MF}}}^{\L_1}\\
W_{\rm{(a1); {MF}}}^{\L_1}\\
-\L_j B_{\rm{(a1); {MF}}}^{\L_1}\\
\end{pmatrix} = \begin{pmatrix}
1\\
-\tan \gamma + O(\eps^{\frac 13-\mu})\\
 \frac{(k+k')}{\sin^2 \gamma} + O(\eps^{\frac{1}{3}-\mu})
\end{pmatrix}. 
\end{align}
Moreover, in the regime $\omega \sim \eps^\frac 13 \omega_0$ for some $\omega_0 \neq 0$ uniformly, the eigenvectors related to $\L_j=\eps^{-\frac 12} \ell_j(\theta)+O(\eps^{-\frac 16}), \, j=2,3,$ read
\begin{align}
\begin{pmatrix}
U_{\rm{(a1); {MF}}}^{\L_j}\\
W_{\rm{(a1); {MF}}}^{\L_j}\\
-\L_j B_{\rm{(a1); {MF}}}^{\L_j}\\
\end{pmatrix} = \begin{pmatrix}
1\\
\frac{i\eps^\frac 12 (k+k')}{\ell_j} + O(\eps^{\frac 13-\mu})\\
\eps^{-\frac 56} \frac{i \sin \gamma \ell_j }{\omega_0 }+O(\eps^{-\frac 13-\mu})
\end{pmatrix}.
\end{align}
The goal now is to find $(c_1, c_2, c_3) \in \mathbb{C}^3, \, c_j= c_j(k, k')$ such that
\begin{align}\label{eq:BCmf1}
\sum_{\ell=1}^2 (\W^{1, \rm{nonlin}}_{\rm{(a\ell )}; \rm{MF} }|_{y=0}+\W^{1, \rm{lin}}_{\rm{(a \ell)}; \rm{MF} }|_{y=0})=0,
\end{align}
where 
\begin{align*}
\W^{1, \rm{lin}}_{\rm{(a\ell)}; \rm{MF} }
=\sum_{j =1}^3 \W^{1, \rm{lin}, \L_j}_{\rm{(a\ell)};\text{MF} },
\end{align*}
and, for $j=1,2,3$ and $\ell=1,2$,
\begin{align}
\W^{1, \rm{lin}, \L_j}_{\rm{(a\ell)}; \rm{MF} }:&= \frac{\sigma^2}{ \eps^{1/3}} \int_{\mathbf{1}_{\mathcal{M}^0}}  c_j(k,k') \widehat{\Psi}_{\eps, \sigma}\widehat{\Psi}'_{\eps, \sigma}
\begin{pmatrix}
U_{\rm{(a\ell); {MF}}}^{\L_j}\\
 W_{\rm{(a\ell); {MF}}}^{\L_j}\\
 B_{\rm{(a\ell); {MF}}}^{\L_j}
\end{pmatrix}
 e^{-i (\omega+\omega') t + i (k+k')(x-x_0) - \L_j(k+k') y} \, dk \, dk' \, dm \, dm'. \label{eq:mflinear} 
\end{align}

Using \eqref{eq:W1nonlin} and the above expression of the components in \eqref{eq:W1UB} and \eqref{eq:W1}, equation \eqref{eq:BCmf1} reads
 \begin{align}\label{eq:algebraic-system}
\mathcal{C}' 
\begin{pmatrix}
c_1\\
c_2\\
c_3\\ 
\end{pmatrix} =\sum_{i=1,2} a_i a_i' \widehat{\Psi}_{\eps, \sigma}\widehat{\Psi}'_{\eps, \sigma}
 \frac{-k'U + i \L_i' W}{\omega+\omega' \pm \sin \gamma} \times 
 \begin{pmatrix}
\Pi_\pm U'\\
\frac{i(k+k')}{\L_i+\L_i'} (U' \pm i B') \\
\Pi_\pm B'
 \end{pmatrix}=: \text{(r.h.s.)},
 \end{align}
where 
\begin{align}\label{eq:algebraic}
\mathcal{C}'=\begin{pmatrix}
U_{\rm{(a1); {MF}}}^{\L_1} & U_{\rm{(a1); {MF}}}^{\L_2} & U_{\rm{(a1); {MF}}}^{\L_3}\\
W_{\rm{(a1); {MF}}}^{\L_1} & W_{\rm{(a1); {MF}}}^{\L_2} & W_{\rm{(a1); {MF}}}^{\L_3}\\
-\L_1 B_{\rm{(a1); {MF}}}^{\L_1} & -\L_2 B_{\rm{(a1); {MF}}}^{\L_2} & -\L_3 B_{\rm{(a1); {MF}}}^{\L_3}\\
\end{pmatrix}\sim  \begin{pmatrix}
1 & 1& 1\\
-\tan\gamma  & \eps^{\frac 12} (k+k')\ell_2^{-1} & \eps^{\frac 12} (k+k')\ell_3^{-1} \\
 \frac{(k+k')}{\sin^2 \gamma}& \frac{i \sin \gamma \ell_2}{\omega_0\eps^{5/6}} & \frac{i \sin \gamma\ell_3}{\omega_0\eps^{5/6}} 
\end{pmatrix}.
\end{align}
In the regime of Assumptions \ref{ass}, where $\eps^\frac 13 \eta \le |k+k'| \le C\sigma^{-1}$, one easily checks that
\begin{align*}
({\mathcal{C}}')^{-1} \sim \frac{i \tan \gamma (\ell_3-\ell_2) }{\omega_0}
&\begin{pmatrix}
\eps^\frac 12 (k+k') (\frac{\ell_2}{\ell_3}-\frac{\ell_3}{\ell_2}) &  \frac{i\sin \gamma (\ell_2-\ell_3)}{\omega_0} & \eps^\frac 43 (k+k')(\ell_3^{-1}-\ell_2^{-1}) \\
 \frac{i \tan \gamma \sin \gamma \ell_3}{\omega_0} & \frac{i \sin \gamma \ell_3 }{\omega_0} & -\eps^\frac 56 \tan \gamma \\
 -\frac{i \tan \gamma \sin \gamma \ell_2}{\omega_0} & -\frac{i \sin \gamma \ell_2 }{\omega_0} & \eps^\frac 56 \tan \gamma \\
\end{pmatrix}\\
&=\begin{pmatrix}
O(\eps^{\frac 12} (k+k')) & O(1)  & O( \eps^{\frac 43} (k+k'))\\
O(1)   & O(1)  & O(\eps^{\frac 56}) \\
O(1)   & O(1)  & O(\eps^{\frac 56})
\end{pmatrix}.
\end{align*}
Now we can solve \eqref{eq:algebraic-system} by applying $(\mathcal{C}')^{-1}$ to (r.h.s). Let us first note from \eqref{eq:W1UB} and \eqref{eq:W1} (where we recall that $i=1,2$ according with the numerology of Lemma \ref{lem:asym}) that
\begin{align*}
\text{(r.h.s.)} = \eps^{-\frac 23} \kk^{-\frac 13} |\k'|^\frac 23 \begin{pmatrix}
O(1)\\
O(\eps^\frac 13 (k+k'))\\
O(1)
\end{pmatrix},
\end{align*}
so that
\begin{align}\label{eq:amplitudeMF}
c_1 = O(\eps^{-\frac 13} \kk^{-\frac 13} |\k'|^{\frac 23}) \times  (k+k'), \quad c_{2,3}   =O(\eps^{-\frac 23} \kk^{-\frac 13}|\k'|^\frac 23).
\end{align}
{
We will now estimate the norm of $\W^{1, \rm{lin}, \L_j}_{\rm{(a1)}; \rm{MF} }$, $j=1,2,3$.
Let us first focus on $\W^{1, \rm{lin}, \L_1}_{\rm{(a1)}; \rm{MF} }$. 

Recalling that $\begin{pmatrix}
U_{\rm{(a\ell); {MF}}}^{\L_1}\\
 W_{\rm{(a\ell); {MF}}}^{\L_1}\\
 B_{\rm{(a\ell); {MF}}}^{\L_1}
\end{pmatrix}$ is $O(1)$ in $L^\infty$, using \eqref{eq:mflinear}, \eqref{eq:L1mean}, the expression of $c_1$ in \eqref{eq:amplitudeMF}, 
we have
\begin{align}
\W^{1, \rm{lin}, \L_1}_{\rm{(a\ell)}; \rm{MF} }&= \frac{\delta\sigma^2}{ \eps^{1/3}} \int_{\mathbf{1}_{\mathcal{M}^0}}  c_1\widehat{\Psi} \widehat{\Psi}' 
\begin{pmatrix}
U_{\rm{(a\ell); {MF}}}^{\L_1}\\
 W_{\rm{(a\ell); {MF}}}^{\L_1}\\
 B_{\rm{(a\ell); {MF}}}^{\L_1}
\end{pmatrix}
 e^{-i (\omega+\omega') t + i (k+k')(x-x_0)  + i(k+k')(\cot\gamma + \frac{\omega + \omega'}{\sin^2\gamma} + o(\eps^{1/3})) y}\notag\\
 & \quad  \times e^{-\text{Re}(\L_1) y} \, dk \, dk' \, dm \, dm'\notag\\
 &= \frac{\delta\sigma^2}{ \eps^{2/3}} \int_{\mathbf{1}_{\mathcal{M}^0}}  O(\kk^{-\frac 13}) O(|\k'|^{\frac 23})(k+k') \widehat{\Psi}_{\eps, \sigma}\widehat{\Psi}'_{\eps, \sigma}
 e^{-i (\omega+\omega') t + i (k+k')(x-x_0)  + i(k+k')(\cot\gamma + \frac{\omega + \omega'}{\sin^2\gamma} + o(\eps^{1/3})) y}\notag\\
 & \quad  \times e^{-\text{Re}(\L_1) y} \, dk \, dk' \, dm \, dm'. \label{eq:Wlinlambda1}
\end{align}
At a first stage, we ignore the decay. We want to compute the $L^2$ norm of 
\begin{align*}
\W_{\rm{no decay}} &=  \frac{\delta \sigma^2}{ \eps^{2/3}} \int_{\mathbf{1}_{\mathcal{M}^0}}  O(\kk^{-\frac 13}) O(|\k'|^{\frac 23})(k+k') \widehat{\Psi}_{\eps, \sigma}\widehat{\Psi}'_{\eps, \sigma}
 e^{-i (\omega+\omega') t + i (k+k')(x-x_0)  + i(k+k')( \cot\gamma + \frac{\omega + \omega'}{\sin^2\gamma} + o(\eps^{1/3})) y}\, dk \, dk' \, dm \, dm' .
\end{align*}

Let us consider the following change of variables  $\widetilde \k=(k, \widetilde m)$, $\widetilde \k'=(k', \widetilde m')$, where 
\begin{align}\label{def:tildem}
\widetilde{m}= k \left(\cot\gamma+ \frac{\omega + \omega'}{\sin^2\gamma}\right); \quad \widetilde{m}'= k' \left(\cot\gamma + \frac{\omega + \omega'}{\sin^2\gamma}\right).
\end{align}
The (transpose of the) Jacobians of the change of variables are
\begin{align*}
\det \begin{pmatrix}
1 & \cot \gamma +\frac{\omega+\omega' + k \de_k \omega}{\sin^2\gamma}\\
0 & \frac{k(\de_m \omega)}{\sin^2\gamma} 
\end{pmatrix}= - \frac{k^2 (k\sin \gamma + m \cos \gamma)}{|\k|^3(\sin\gamma)^2}; \quad \det \begin{pmatrix}
1 & \cot \gamma+\frac{\omega+\omega' + k' \de_{k'} \omega'}{\sin^2\gamma}\\\
0 & \frac{k'(\de_{m'} \omega')}{\sin^2\gamma}
\end{pmatrix}= - \frac{(k')^2 (k'\sin \gamma + m' \cos \gamma)}{|\k'|^3(\sin\gamma)^2},
\end{align*}
where the expressions of $\de_m \omega$ and $\de_{m'} \omega'$ can be computed from 
 \eqref{defomg}.
Note that as $(\kk, \frac \pi 2-(\theta+\gamma))$ are the polar coordinates of $(k,m)$ (resp. $(k',m'))$, 
it follows that $k \sin \gamma + m \cos \gamma=\kk \cos(\theta)$ (resp.  $k' \sin \gamma + m' \cos \gamma =|\k'| \cos(\theta')$). Now, thanks to the angular localization provided by $\widehat{\Psi}_{\eps, \sigma} \widehat{\Psi}'_{\eps, \sigma}$ inside the integral, one has that
\begin{align*}
dkdk'dm  dm' = O(1) \, dkdk'd\widetilde{m} d\widetilde{m}',
\end{align*}
so that we can rewrite
\begin{align}\label{eq:nodecay}
\W_{\rm{no decay}} &=  \frac{\delta \sigma^2}{ \eps^{2/3}} \int_{\mathbf{1}_{\mathcal{M}^0}}  O(\widetilde \k^{\frac 23}) O(|\widetilde \k'|^{\frac 53}) \widehat{\Psi} \widehat{\Psi}' 
 e^{-i (\omega+\omega') t + i (k+k')(x-x_0)  + i(\widetilde m + \widetilde m'+ o(\eps^{\frac 13})) y}\,  dk \, dk' \, d\widetilde m \, d\widetilde m'.
\end{align}

Now note that from \eqref{def:tildem}, denoting by $\widetilde \theta$ the angle of $\widetilde \k$ and by $\widetilde \theta'$ the angle of  $\widetilde \k'$, as $\omega+\omega'=O(\eps^\frac 13)$ we have that
\begin{align*}
\tan \widetilde \theta = \frac{\widetilde m}{k} = \cot \gamma + O(\eps^\frac 13); \quad \tan \widetilde \theta' = \frac{\widetilde m'}{k'} = \cot \gamma + O(\eps^\frac 13).
\end{align*}
We use this information to express the angular localization due to $\widehat{\Psi}_{\eps, \sigma} \widehat{\Psi}'_{\eps, \sigma}$ inside the integral in terms of the new angular variables $\widetilde \theta, \widetilde \theta'$. Recall the expression of $\widehat{\Psi}_{\eps, \sigma}$ in Definition \ref{def:inc-beam}, i.e. 
\begin{align*}
\widehat{\Psi}_{\eps, \sigma}&=\chi(\sigma \kk)( \chi' (\eps^{-\frac 13} (\sin \theta-\sin \gamma)) \chi (\eps^{-\frac 13} (\cos \theta-\cos \gamma)) + \chi' (\eps^{-\frac 13} (\sin \theta+\sin \gamma)) \chi (\eps^{-\frac 13} (\cos \theta+\cos \gamma))).
\end{align*}
We know from \eqref{eq:omega} that
$$\omega= \sin \theta, \quad \omega' =  \sin \theta',$$
so that the change of variables \eqref{def:tildem} reads
\begin{align}\label{eq:locangle}
\frac{\widetilde{m}}{k}= \tan \widetilde \theta =\cot\gamma + \sin \theta + \sin \theta'; \quad \frac{\widetilde{m}'}{k'}= \tan \widetilde \theta' = \cot \gamma + \sin \theta + \sin \theta'.
\end{align}
Now, the presence of $\widehat{\Psi}_{\eps, \sigma}, \widehat{\Psi}'_{\eps, \sigma}$ inside the integral implies that 
$$\theta, \theta' \in  \{\gamma + O(\eps^\frac 13), \gamma+\pi +O(\eps^\frac 13)\}.$$
As we are dealing with the \emph{linear mean flow} \bw, by definition $\omega+\omega'=O(\eps^\frac 13)$. Thus let us consider for simplicity the case 
$$\theta= \gamma + O(\eps^\frac 13); \quad \theta' =  \gamma+\pi + O(\eps^\frac 13).$$
It then follows from \eqref{eq:locangle} that
\begin{align}\label{eq:ang-loc}
\sin \theta - \sin \gamma = \tan \widetilde \theta - \cot \gamma+O(\eps^\frac 13); \quad \sin \theta' + \sin \gamma = \tan \widetilde \theta' - \cot \gamma + O(\eps^\frac 13),
\end{align}
so that
\begin{align*}
\widetilde \theta = \arctan (\sin \theta - \sin \gamma+\cot \gamma)+O(\eps^\frac 13); \quad \widetilde \theta' = \arctan (\sin \theta' + \sin \gamma+\cot\gamma)+O(\eps^\frac 13).
\end{align*}
Since $\sin \theta - \sin \gamma=O(\eps^\frac 13)$ and $\sin \theta' + \sin \gamma=O(\eps^\frac 13)$, we deduce that
\begin{align*}
\widetilde \theta=\arctan \cot \gamma +O(\eps^\frac 13); \quad \widetilde \theta'=\arctan \cot \gamma +O(\eps^\frac 13),
\end{align*}
which in the new \emph{tilde} variables provides an angular localization with the same $\eps^{1/3}$ scaling (and a different angle), i.e.
\begin{align*}
\widetilde \theta, \widetilde \theta' \in \{ \frac \pi 2 - \gamma, \frac{3\pi}{2}- \gamma\}. 
\end{align*}
In order to express the above angular localization of $\widetilde \theta, \widetilde \theta'$ inside the integral, one has to write $\widehat{\Psi}_{\eps, \sigma}, \widehat{\Psi}'_{\eps, \sigma}$ as functions of $|\widetilde \k|, \widetilde \theta$ and $|\widetilde \k'|, \widetilde \theta'$ respectively. It is immediate to notice that since \eqref{def:tildem} gives 
\begin{align*}
\widetilde m&= k \cot \gamma + O(\eps^{\frac 13-\mu})=|\k| \cot \gamma \sin (2\gamma) + O(\eps^{\frac 13-\mu}),\\
\widetilde m'&= k' \cot \gamma + O(\eps^{\frac 13-\mu})=-|\k'| \cot \gamma \sin(2\gamma) + O(\eps^{\frac 13-\mu}),
\end{align*}
it follows that there exist two universal constants $C, C'$ such that  $\chi (\sigma \kk) =\chi (C \sigma |\widetilde \k|)$ and $\chi (\sigma |\k'|) =\chi (C \sigma |\widetilde \k'|)$. This provides lower ($C\eta, C'\eta$) and upper ($C\sigma^{-1}, C'\sigma^{-1}$) extreme values of the interval where $|\widetilde \k|, |\widetilde \k'|$ take values. 

\noindent Taking into account the normalization factor $\eps^{-\frac 16}{\sigma}$ of a beam wave (see Definition \ref{def:inc-beam}) (and the weakly nonlinear small parameter $\delta$), we deduce from the above computations that \eqref{eq:nodecay} is approximately  a product of a \bw of order $(-\frac 16, \frac 23)$ by a \bw of order $(-\frac 16, \frac 53)$. Thus we can reproduce the proof of Lemma \ref{estbbl} to estimate its $L^2$ norm. Using \eqref{eq:normprod} yields
\begin{align}\label{eq:normWnodec}
\|\W_{\rm{no decay}}\|_{L^2} & =O(\delta \eps^{-\frac 16} \sigma^{-\frac{10}{3}}). 
\end{align}}

Now we would like to use this information on the $L^2$ norm of $\W_{\rm{no decay}}$ to deduce the size of $\W^{1, \rm{lin}, \L_1}_{\rm{(a\ell)}; \rm{MF} }$ in $L^2$. We use the following change of variables
\begin{align*}
\zeta_1=k+k', \quad \zeta_2=\widetilde m + \widetilde m'.
\end{align*}
This yields that
\begin{align*}
\W_{\rm{no decay}} &=  \frac{\delta \sigma^2}{ \eps^{2/3}} \int_{\mathbf{1}_{\mathcal{M}^0}}  O(|\widetilde \k|^{\frac 23}+ |\zeta|^{\frac 23} ) \zeta_1  \widehat{\Psi}_{\eps, \sigma}\widehat{\Psi}'_{\eps, \sigma}
 e^{-i (\omega+\omega') t + i \zeta_1 (x-x_0)  + i(\zeta_2 + o(\eps^{\frac 13-\mu})) y}\,  dk  \, d\widetilde m \, d\zeta_1 \, d\zeta_2.
\end{align*}

Let us go back to 
$
\W^{1, \rm{lin}, \L_1}_{\rm{(a\ell)}; \rm{MF} }
$ in \eqref{eq:Wlinlambda1},
with the slow decay in $y$ given by $\text{Re}(\L_1)$ in \eqref{eq:L1mean}.
We have that
\begin{align*}
\W^{1, \rm{lin}, \L_1}_{\rm{(a\ell)}; \rm{MF} }& =  \frac{\delta \sigma^2}{ \eps^{2/3}} \int_{\mathbf{1}_{\mathcal{M}^0}}O(|\widetilde \k|^{\frac 23}+ |\zeta|^{\frac 23} ) \zeta_1   (1- e^{-\text{Re}(\L_1) y})   \widehat{\Psi}_{\eps, \sigma}\widehat{\Psi}'_{\eps, \sigma}
 e^{-i (\omega+\omega') t + i \zeta_1 (x-x_0)  + i(\zeta_2 + o(\eps^{\frac 13-\mu})) y}\,  dk  \, d\widetilde m \, d\zeta_1 \, d\zeta_2.
\end{align*}

One has to estimate the latter.
Note that
\begin{align}\label{eq:Ftransfx}
\mathcal{F}_{(x-x_0)\rightarrow \zeta_1}(\W_{\rm{nodecay}}-\W^{1, \rm{lin}, \L_1}_{\rm{(a\ell)}; \rm{MF} }) & =  \frac{\delta\sigma^2}{\eps^{2/3}} \int  O(|\widetilde \k|^{\frac 23}+ |\zeta|^{\frac 23} ) \zeta_1    \widehat{\Psi}_{\eps, \sigma}\widehat{\Psi}'_{\eps, \sigma} e^{-i (\omega+\omega') t + i \zeta_2 y} (1-e^{-\text{Re}(\L_1) y})\ \,  dk  \, d\widetilde m \, d\zeta_2.
\end{align}

Now, we can compute for any $\lambda>0$
\begin{align*}
 \|\mathcal{F}_{(x-x_0)\rightarrow \zeta_1}(\W_{\rm{nodecay}}-\W^{1, \rm{lin}, \L_1}_{\rm{(a\ell)}; \rm{MF} })\|_{L^2_{y \le \eps^{-\lambda}}}& \le \frac{\delta \sigma^{2}}{\eps^{2/3}} \int O((|\widetilde \k|^{\frac 23}+ |\zeta|^{\frac 23}) |\zeta| )  \widehat{\Psi}_{\eps, \sigma}\widehat{\Psi}'_{\eps, \sigma}\|1-e^{-\text{Re}(\L_1) y}\|_{L^{2}_{y \le \eps^{-\lambda}}}\, dk  \, d\widetilde m \, d\zeta_2.
\end{align*}

Let us look at $ \|1-e^{-\text{Re}(\L_1) y}\|_{L^{2}_{y \le \eps^{-\lambda}}}$. From \eqref{eq:L1mean} and recalling that we are integrating over $\mathcal{M}^0$ in \eqref{def:M0}, one has that
\begin{align*}
 \text{Re}(\L_1) \sim \eps (k+k')^3 \le O(\eps^{1-3\mu}).
\end{align*}
In particular, there exists a universal constant $C>0$ such that
$$1-e^{-\text{Re}(\L_1) y} \le 1-e^{-C\eps^{1-3\mu} y}.$$
Therefore, we have
\begin{align*}
 \|1-e^{-\text{Re}(\L_1) y}\|_{L^{2}_{y \le \eps^{-\lambda}}} \le \|1-e^{-C\eps^{1-3\mu} y}\|_{L^2_{y \le \eps^{-\lambda}}}=O(\eps^{1-3\mu - \frac 32 \lambda})=o(1) \quad \text{for} \quad \lambda < \frac 23 (1-3\mu).
\end{align*}
This gives that
\begin{align*}
 \|\mathcal{F}_{(x-x_0)\rightarrow \zeta_1}(\W_{\rm{nodecay}}-\W^{1, \rm{lin}, \L_1}_{\rm{(a\ell)}; \rm{MF} })\|_{L^2_{y \le \eps^{-\lambda}}}& \le C \frac{\delta \sigma^{2}}{\eps^{2/3}} \times \eps^{1-3\mu - \frac 32 \lambda}  \int O((|\widetilde \k|^{\frac 23}+ |\zeta|^{\frac 23}) |\zeta| )    \widehat{\Psi}_{\eps, \sigma}\widehat{\Psi}'_{\eps, \sigma}\, dk  \, d\widetilde m \, d\zeta_2.
\end{align*}
Now we use Plancherel identity, so that
\begin{align*}
\|\W_{\rm{nodecay}}-\W^{1, \rm{lin}, \L_1}_{\rm{(a\ell)}; \rm{MF} }\|_{L^2_{x,y| y \le \eps^{-\lambda}}} & \le  \|\mathcal{F}_{(x-x_0)\rightarrow \zeta_1}(\W_{\rm{nodecay}}-\W^{1, \rm{lin}, \L_1}_{\rm{(a\ell)}; \rm{MF} })\|_{L^2_{\zeta_1} L^2_{y \le \eps^{-\lambda}}} \\
&\le C\delta  \sigma^2 \eps^{\frac 13-3\mu - \frac 32 \lambda}\left\|  \int O((|\widetilde \k|^{\frac 23}+ |\zeta|^{\frac 23}) |\zeta| )  \widehat{\Psi}_{\eps, \sigma}\widehat{\Psi}'_{\eps, \sigma} \, dk  \, d\widetilde m \, d\zeta_2\right\|_{L^2_{\zeta_1}} \\
& \le  C \delta \sigma^{-\frac 23} \eps^{\frac 23-3\mu - \frac 32 \lambda}=O(\delta \eps^{\frac 23-\frac{11}{3}\mu - \frac 32\lambda}),
\end{align*}
where the additional $\eps^\frac 13$ factor comes from the angular localization of the vector $(k, \widetilde m)$. Choosing for instance $\lambda=\frac 23 (1-4\mu)$ with $\mu $ small enough, since from \eqref{eq:normWnodec} we have $\|\W_{\rm{nodecay}}\|_{L^2} =O(\delta \eps^{-\frac 16-\frac{10}{3} \mu})$, under the condition $O(\eps^{-\mu}) \le O (\eps^{-\frac 16 - \frac{10}{3}\mu})$ (which is always satisfied for $\mu>0$), it follows that
\begin{align*}
\|\W^{1, \rm{lin}, \L_1}_{\rm{(a\ell)}; \rm{MF} }\|_{L^2_{x,y| y \le \eps^{-\lambda}}} \le 2 \|\W_{\rm{nodecay}}\|_{L^2_{x,y| y \le \eps^{-\lambda}}}.
\end{align*}
Coming back to the variables $( \widetilde m,  \widetilde m')$, integrating by parts in $\widetilde m$, we have that
\begin{align*}
i y \mathcal{F}_{(x-x_0) \rightarrow \zeta_1}(\W^{1, \rm{lin}, \L_1}_{\rm{(a\ell)}; \rm{MF} }) &=  -\frac{\delta\sigma^2}{\eps^{2/3}} \int e^{i(\widetilde m+\widetilde m') y} \de_{\widetilde m} (O(|\widetilde \k|^{\frac 23}+ |\widetilde \k'|^{\frac 23} ) \zeta_1\widehat{\Psi}_{\eps, \sigma}\widehat{\Psi}'_{\eps, \sigma}
 e^{-i (\omega+\omega') t} e^{-\text{Re}(\L_1) y}) \,  dk  \, d\widetilde m \, d\widetilde m'\\
 &= - \frac{\delta\sigma^2}{\eps^{2/3}}  \int  e^{i(\widetilde m+\widetilde m') y} O((1+t)|\widetilde \k|^{-1}(|\widetilde \k|^{\frac 23}+ |\widetilde \k'|^{\frac 23}) )\zeta_1 \widehat{\Psi}_{\eps, \sigma}\widehat{\Psi}'_{\eps, \sigma}
 e^{-i (\omega+\omega')  t} e^{-\text{Re}(\L_1) y} \,  dk  \, d\widetilde m \, d\widetilde m'.
\end{align*}
This way
\begin{align*}
|\mathcal{F}_{(x-x_0) \rightarrow \zeta_1}(\W^{1, \rm{lin}, \L_1}_{\rm{(a\ell)}; \rm{MF} })| & =y^{-1} |\mathcal{F}_{(x-x_0) \rightarrow \zeta_1}(\W^{1, \rm{lin}, \L_1}_{\rm{(a\ell)}; \rm{MF} })| O(1+t),
\end{align*}
and 
\begin{align*}
\|\mathcal{F}_{(x-x_0) \rightarrow \zeta_1}(\W^{1, \rm{lin}, \L_1}_{\rm{(a\ell)}; \rm{MF} })\|_{L^2_{\zeta_1, y| y > \eps^{-\lambda}}} & = O(\eps^{ \lambda }(1+t))\|\W^{1, \rm{lin}, \L_1}_{\rm{(a\ell)}; \rm{MF} }\|_{L^2(\R^2_+)}.
\end{align*}
Now, as $\lambda=\frac 23 (1-4\mu)$, using that
$$\|\W^{1, \rm{lin}, \L_1}_{\rm{(a\ell)}; \rm{MF} }\|_{L^2(\R^2_+)}\le \|\W^{1, \rm{lin}, \L_1}_{\rm{(a\ell)}; \rm{MF} }\|_{L^2_{x, y| y > \eps^{-\lambda}}}+\|\W^{1, \rm{lin}, \L_1}_{\rm{(a\ell)}; \rm{MF} }\|_{L^2_{x, y| y \le \eps^{-\lambda}}},$$ we have that
\begin{align*}
\|\W^{1, \rm{lin}, \L_1}_{\rm{(a\ell)}; \rm{MF} }\|_{L^2_{x, y| y > \eps^{-\lambda}}} (1-O(\eps^{\frac 23 - \frac 83 \mu}(1+t))) \le \|\W^{1, \rm{lin}, \L_1}_{\rm{(a\ell)}; \rm{MF} }\|_{L^2_{x, y| y \le \eps^{-\lambda}}} \le 2 \|\W_{\rm{nodecay}}\|_{L^2(\R^2_+)}.
\end{align*}
As $\mu < \frac 14$, we get the desired bound until $t=O(\eps^{-\frac 23 + \frac 83 \mu})$, which is already longer than our stability time-scale in Remark \ref{rmk:timescale}, but the same type of estimate for all times can be obtained by integrating by parts in time (recall that 
$\W^{1, \rm{lin}, \L_1}_{\rm{(a\ell)}; \rm{MF} } \times \textbf{1}_{(\eps^{-\lambda}, + \infty)}$ 
at $\eps$ fixed is the tail of an $L^2(\R^2_+)$ function in the Schwartz space).

\vspace{3mm}

Now we compute the norms of the remaining $\W_{\rm{(a\ell); MF} }^{1, \rm{lin},\L_j }$ for $j=2,3$.
For the boundary layer part, recall from \eqref{eq:L1mean} (and the above discussion) that for $j=2,3$ we have $\text{Re}(\lambda_j)\ge \frac{C_j}{\eps^{1/2}}$ for some universal constant $C_j>0$ independent of $k,k'$. Therefore, $\W_{\rm{(a\ell); MF} }^{1, \rm{lin},\L_j} $ can be seen as a product of a \blbw of order $(\frac 12, 0, - \frac 13, - \frac 13)$ and a \blbw of order $(\frac 12, 0, - \frac 13, \frac 23)$. Using Lemma \ref{estbbl} yields
$$\|\W_{\rm{(a\ell); MF} }^{1, \rm{lin},\L_j }\|_{L^2} =O(\delta \eps^{-\frac{1}{12}} \sigma^{-\frac{11}{6}}), \quad j=2,3.$$
\begin{remark}
Note that in the \emph{nonlinear mean flow corrector} \eqref{eq:W1UB}-\eqref{eq:W1} where
 $\omega+\omega' \sim  \eps^\frac 13$, we have that  $\eps^{\frac 13} \lesssim  k+k'  \lesssim \eps^{-\mu} $ for $\mu>0$ small. This is different from the setting of \cite{BDSR19}, where $k+k' \sim \eps^\frac 13$ if and only if $\omega + \omega\sim  \eps^\frac 13$ (namely, a time resonance is also a spatial resonance). In fact, in our case time resonances do not correspond necessarily to spatial resonances thanks to the stronger spatial localization of our beam waves with respect to the packets of plane waves in \cite{BDSR19}.

\end{remark}

\item[\rm{(ii)}] \textbf{The second harmonic corrector.} 
Similarly to what has been done before, now we seek for a linear solution of system \eqref{eq:linsystem} that balances the boundary contribution due to the (nonlinear) second harmonic in \eqref{eq:W1UB1}.
Then we appeal to Lemma \ref{lem:asym3}. We see that the asymptotics of the roots $\L_j, j=1,2,3$ are the same as for the mean flow in \eqref{eq:L1mean}, but in the present case $\omega+\omega' = \pm 2\sin \gamma +O(\eps^\frac 13)$ and $k+k'$ has a uniform lower bound given by $\eta>0$ for all $\eps>0$. Note in particular that as Lemma \ref{lem:asym3} we know that $$\L_1 \sim \frac{i(k+k')}{((\omega+\omega')^2-\sin^2\gamma)} (\sin \gamma \cos \gamma + (\omega+\omega') \sqrt{1-(\omega+\omega')^2}),$$ then there is a contribution with with strictly positive real part if $(\omega+\omega')^2>1$. Since $\omega+\omega' \sim \pm 2\sin \gamma$, this real contribution appears if $\sin \gamma > \frac 12$. In this case, the second harmonic as a decay of $O(1)$ in $y$ and it is called \emph{evanescent second harmonic} (see \cite{BDSR19, DY1999} for further details).

Now, exactly as before, we introduce 
the functions $c_j$ for $j=1,2,3$ to be determined by solving the analogous of \eqref{eq:algebraic-system}, and from \eqref{eq:W1nonlin} with $\omega+\omega' \sim \pm 2 \sin \gamma$ for $\ell=1,2$ we have
\begin{align}
\W^{1, \rm{lin}, \L_j}_{\rm{(a\ell)}; \rm{(II)} }:&= \frac{\delta \sigma^2}{ \eps^{1/3}} \int_{\mathbf{1}_{\mathcal{M}^{\rm{II}}}}  c_j(k,k') \widehat{\Psi}_{\eps, \sigma}\widehat{\Psi}'_{\eps, \sigma}
\begin{pmatrix}
U_{\rm{(a\ell); {(II)}}}^{\L_j}\\
 W_{\rm{(a\ell); {(II)}}}^{\L_j}\\
 B_{\rm{(a\ell); {(II)}}}^{\L_j}
\end{pmatrix}
 e^{-i (\omega+\omega') t + i (k+k')(x-x_0) - \L_j(k+k') y} \, dk \, dk' \, dm \, dm', \label{eq:2linear} 
\end{align}
\begin{align}\label{def:MII}
\mathcal{M}^{\rm{II}}:=\{(k, k') \in \R^2 \, : \,  \omega+\omega' =\pm 2\sin \gamma + O(\eps^\frac 13), \; \eta \le |k+k'| \le  \eps^{-\mu}, \; \mu>0\}.
\end{align}
As before, the functions $c_j$ solve the linear algebraic system 
\begin{align}\label{eq:BCmf}
\sum_{\ell=1}^2 (\W^{1, \rm{nonlin}}_{\rm{(a\ell )}; \rm{(II)} }|_{y=0}+\W^{1, \rm{lin}}_{\rm{(a \ell)}; \rm{(II)} }|_{y=0})=0,
\end{align}
where 
\begin{align*}
\W^{1, \rm{lin}}_{\rm{(a\ell)}; \rm{(II)} }
=\sum_{j =1}^3 \W^{1, \rm{lin}, \L_j}_{\rm{(a\ell)};\text{(II)} },
\end{align*}
and we recall that $\W^{1, \rm{nonlin}}_{\rm{(a\ell )}; \rm{(II)} }$ is given exactly by the same expression in \eqref{eq:Wnonlin}, with $\mathcal{M}^{0}$ replaced by $\mathcal{M}^{\rm{II}}.$ One can repeat the computations of the previous section to obtain again that
\begin{align}\label{eq:amplitudeII}
c_1 = O(\eps^{-\frac 13} \kk^{-\frac 13} |\k'|^{\frac 23}) \times  (k+k'), \quad c_{2,3}   =O(\eps^{-\frac 23} \kk^{-\frac 13}|\k'|^\frac 23).
\end{align}
An application of Lemma \ref{estbbl} gives immediately that for the boundary layer part (for $j=2,3$) we have
$$\|\W_{\rm{(a1); (II)} }^{1, \rm{lin},\L_j }\|_{L^2} =O(\delta \eps^{-\frac{1}{12}} \sigma^{-\frac{11}{6}}), \quad j=2,3.$$
It remains to estimate the $L^2(\R^2_+)$ norm of $\W_{\rm{(a\ell); (II)} }^{1, \rm{lin},\L_1}$, i.e. the second harmonic \bw
We have that
\begin{align*}
\W^{1, \rm{lin}, \L_1}_{\rm{(a\ell)}; \rm{(II)} }&= \frac{\delta \sigma^2}{ \eps^{1/3}} \int_{\mathbf{1}_{\mathcal{M}^{\rm{II}}}}  c_1\widehat{\Psi}_{\eps, \sigma}\widehat{\Psi}'_{\eps, \sigma}
\begin{pmatrix}
U_{\rm{(a\ell); {(II)}}}^{\L_1}\\
 W_{\rm{(a\ell); {(II)}}}^{\L_1}\\
 B_{\rm{(a\ell); {(II)}}}^{\L_1}
\end{pmatrix}
 e^{-i (\omega+\omega') t + i (k+k')(x-x_0)  + i\frac{(k+k')}{31\sin^2\gamma}(\sin \gamma \cos \gamma +(\omega + \omega')\sqrt{1-(\omega+\omega')^2}+ o(\eps^{1/3})) y}\\
 & \quad  \times e^{-\text{Re}(\L_1) y} \, dk \, dk' \, dm \, dm'\\
 &= \frac{\delta \sigma^2}{ \eps^{2/3}} \int_{\mathbf{1}_{\mathcal{M}^{\rm{II}}}}  O(\kk^{-\frac 13}) O(|\k'|^{\frac 23})(k+k')\widehat{\Psi}_{\eps, \sigma}\widehat{\Psi}'_{\eps, \sigma}
 e^{-i (\omega+\omega') t + i (k+k')(x-x_0)  + i\frac{(k+k')}{3\sin^2\gamma}(\sin \gamma \cos \gamma + (\omega + \omega')\sqrt{1-(\omega+\omega')^2} + o(\eps^{1/3})) y}\\
 & \quad  \times e^{-\text{Re}(\L_1) y} \, dk \, dk' \, dm \, dm'.
\end{align*}
As before, we can ignore the decay in $y$. We apply again the change of variables \eqref{def:tildem} and thanks to the angular localization $$k \sin \gamma + m \cos \gamma \sim k' \sin \gamma + m' \cos \gamma   \sim \cos (\gamma), $$
so that we have again
$$dkdk'd m d  m' = O(1) dkdk' d\widetilde m d \widetilde m'.$$

Since now $\omega + \omega'=\pm 2 \sin \gamma + O(\eps^\frac 13)$, in this case the change of variables \eqref{def:tildem} gives that
$$\widetilde \theta \sim \arctan (\cot \gamma \pm \frac{2}{ \sin \gamma}); \quad \widetilde \theta' \sim \arctan (\cot \gamma \pm \frac{2}{ \sin \gamma}),$$
so that the angular localization (with a different angle) still holds for the vectors $(k, \widetilde m), (k', \widetilde m')$.
Therefore, exactly as before, we have that
\begin{align*}
\|\W^{1, \rm{lin}, \L_1}_{\rm{(a\ell)}; \rm{(II)} }\|_{L^2} & =O(\delta \eps^{-\frac 16} \sigma^{-\frac{10}{3}}). 
\end{align*}
\end{itemize}
The proof of Proposition \ref{prop:inta} is now complete.


\subsection{Interactions of type (b)}
Consider the interactions $\rm{(b\ell)}$, $\ell=1,2$ in Table \ref{table1}.
\begin{proposition}\label{prop:intb}
There exists a corrector $\W_{\rm{(b)}}^1=(u_{\rm{(b)}}^1, w_{\rm{(b)}}^1, b_{\rm{(b)}}^1)^T$ which solves
\begin{align*}
\de_t \W_{\rm{(b)}}^1 + \mathcal{L}_\eps \W_{\rm{(b)}}^1 = - \delta \times (\rm{(b1)}+\rm{(b2)}) + r_{\rm{(b)}}^1,\\
u_{\rm{(b)}}^1|_{y=0}=w_{\rm{(b)}}^1|_{y=0}=\de_y b_{\rm{(b)}}^1|_{y=0}=0,
\end{align*}
with $\|r_{\rm{\rm{(b)}}}^1\|_{L^2} = O(\delta \eps^{\frac 14} \sigma^{-\frac{11}{6}\mu})$.
The corrector is composed of
$$\W_{\rm(b)}^1=\W_{\rm{(b)}; \rm{II}}^1 + \W_{\rm{(b)}; \rm{MF}}^1+ \W^1_{\rm{(b)}; \rm{BL}, \eps^{1/2}},$$
where for the boundary layer part, we have
$$\|\W^1_{\rm{(b)}; \rm{BL}, \eps^{1/2}}\|_{L^2(\mathbb{R}^2)} =O(\delta \eps^{-\frac{11}{6}\mu }),$$
while 
$$ \|\W_{\rm{(b)}; \rm{MF}}^1\|_{L^2} \sim \|\W_{\rm{(b)}; \rm{II}}^1\|_{L^2} =O(\delta \eps^{-\frac{1}{12}-\frac{10}{3}\mu }).$$
\end{proposition}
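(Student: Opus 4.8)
The plan is to follow, step by step, the two-step strategy of the proof of Proposition~\ref{prop:inta}, the only structural novelty being that the ``slow'' boundary layer $\W^0_{\rm{BL}, \eps^{1/2}}$ now enters as the \emph{differentiated} factor in both $\rm{(b1)}$ and $\rm{(b2)}$. Since its decay rate $\L_3 \sim \eps^{-1/2}$ dominates both the decay $\L_i \sim \eps^{-1/3}\kk^{1/3}$ of $\W^0_{\rm{BL}, \eps^{1/3}}$ and the (absent) decay of $\W_{\rm{inc}}$, the nonlinear source $\rm{(b1)}+\rm{(b2)}$ is concentrated in a band of width $\eps^{1/2}$ near $y=0$. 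This already explains why the corrector $\W^1_{\rm{(b)}}$ carries only an $\eps^{1/2}$ boundary-layer component and no $\eps^{1/3}$ one.

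\textbf{Step 1 (bulk correction).} As in part (i) of the proof of Proposition~\ref{prop:inta}, I would first observe that in the $\eps^{1/2}$ boundary-layer scaling the vertical-velocity entry $W$ of the eigenvector $X_{k,\L_3}$ in \eqref{eq:eigen-BLs} is a factor $\eps^{1/2}$ smaller than its $U,B$ entries, so the $3\times3$ system reduces at leading order to the $2\times2$ system for $(u,b)$, the component $w$ being recovered afterwards from the divergence-free constraint. The genuine difference with case (a) is that here $\eps\L_3^2 = O(1)$, so the viscous/diffusive terms are no longer negligible in the leading balance; however, the time frequency $\omega+\omega'$ of the product localizes, through the angular support of $\widehat\Psi_{\eps,\sigma}$, either near $0$ (mean flow) or near $\pm 2\sin\gamma$ (second harmonic), hence stays bounded away from $\pm\sin\gamma$. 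Consequently the full symbol of $\de_t+\mathcal{L}_\eps$ evaluated on these non-resonant modes (decay $\L_i+\L_3$, frequency $\omega+\omega'$) is boundedly invertible, and inverting it produces a nonlinear mean-flow corrector $\W^{1,\rm{nonlin}}_{\rm{(b)};\rm{MF}}$ and a nonlinear second-harmonic corrector $\W^{1,\rm{nonlin}}_{\rm{(b)};\rm{II}}$, exactly as in \eqref{eq:W1UB}--\eqref{eq:W1}. The residual $r^1_{\rm{(b)}}$ collects the error of the Leray-projector approximation \eqref{eq:lerayproj} together with the subleading viscous terms of order $\eps(k+k')^2$; estimating it via \eqref{eq:normprod} and the size $O(\eps^{-1/12}\sigma^{-7/6})$ of $\rm{(b1)},\rm{(b2)}$ recorded in Table~\ref{table1} yields the claimed bound on $\|r^1_{\rm{(b)}}\|_{L^2}$.

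\textbf{Step 2 (lifting the boundary conditions).} The nonlinear correctors above do not satisfy \eqref{eq:cond-BL} at $y=0$, so I would add linear beams cancelling their boundary trace, splitting the computation into the mean-flow part (governed by Lemma~\ref{lem:asym1}, since $\omega+\omega'=O(\eps^{1/3})$, with modes supported in $\mathcal M^0$) and the second-harmonic part (governed by Lemma~\ref{lem:asym3}, since $\omega+\omega'=\pm2\sin\gamma+O(\eps^{1/3})$, supported in $\mathcal M^{\rm{II}}$). In each case the characteristic polynomial has one slowly-decaying root $\L_1\sim k$, generating the beams $\W^1_{\rm{(b)};\rm{MF}}$ and $\W^1_{\rm{(b)};\rm{II}}$, together with two boundary-layer roots $\L_{2,3}\sim\eps^{-1/2}$ generating $\W^1_{\rm{(b)};\rm{BL},\eps^{1/2}}$. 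Imposing that the sum of nonlinear and linear traces vanishes reduces to inverting the same Vandermonde-type matrix $\mathcal C'$ as in \eqref{eq:algebraic}, whose inverse has the orders recorded there; this determines the amplitudes $c_j(k,k')$ and hence the three components of $\W^1_{\rm{(b)}}$.

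\textbf{Estimates and main obstacle.} The $L^2$ size of the boundary-layer part $\W^1_{\rm{(b)};\rm{BL},\eps^{1/2}}$ follows directly from Lemma~\ref{estbbl} applied to the relevant products of \blbw. The hard part, exactly as in case (a), is the $L^2$ estimate of the slowly-decaying beams $\W^1_{\rm{(b)};\rm{MF}}$ and $\W^1_{\rm{(b)};\rm{II}}$ associated with the root $\L_1\sim k$: their decay in $y$ is too weak to be discarded, and one must reproduce the argument of part~(i) of Proposition~\ref{prop:inta}, namely perform the change of variables \eqref{def:tildem} to recover a genuine beam structure (with a shifted localization angle) in the tilde variables, estimate the undamped profile through \eqref{eq:normprod}, and control the slow-decay tail $\{y>\eps^{-\lambda}\}$ by integration by parts in $\widetilde m$ and, for large times, in $t$. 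This produces the stated size $O(\delta\eps^{-1/12}\sigma^{-10/3})$, the extra factor $\eps^{1/12}$ relative to case (a) being inherited from the smaller size $O(\eps^{-1/12}\sigma^{-7/6})$ of the type-(b) interactions.
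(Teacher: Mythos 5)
Your proposal is correct and takes essentially the same route as the paper: the paper's own proof of Proposition \ref{prop:intb} is a single paragraph asserting that the argument is identical to that of Proposition \ref{prop:inta} modulo a uniform gain of $\eps^{\frac{1}{12}}$ inherited from the smaller size of the type-(b) sources in Table \ref{table1}, which is precisely the two-step scheme (bulk correction, then boundary lifting via Lemmas \ref{lem:asym1} and \ref{lem:asym3} and inversion of the matrix $\mathcal{C}'$) that you spell out. Your observation that $\eps\L_3^2=O(1)$ forces the viscous/diffusive terms to be kept in the leading balance for the $\eps^{1/2}$ boundary-layer modes — rather than absorbed into $r^1_{\rm{(b)}}$ as in case (a) — is a genuine point that the paper's ``identical modulo rescaling'' shortcut glosses over, and your conclusions (no $\eps^{1/3}$ boundary-layer component, all norms shifted by $\eps^{\frac{1}{12}}$) match the stated result.
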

The proof is identical to the proof of Proposition \ref{prop:inta} as the interactions between \bw and/or \blbw are exactly the same as before modulo a re-scaling in $\eps$. In fact, in this case every object has an additional smallness factor in terms of positive powers of $\eps$ (that is precisely $\eps^{\frac{1}{12}}$), so that it is smaller than the corresponding object in the proof of Proposition \ref{prop:inta}, but it has the same structure. Therefore, we omit the proof of this proposition as it does not add any new idea and it is already detailed in \cite{BDSR19} in the case of strongly (frequency-) localized packets of waves. 

\subsection{Proof of Proposition \ref{prop:nonlin}}
We immediately have from Proposition \ref{prop:inta} and Proposition \ref{prop:intb} that, denoting $\W^1:=\W^1_{\rm{(a)}}+\W^1_{\rm{(b)}}$, it holds 
\begin{align*}
\de_t \W^1+\mathcal{L}_\eps \W^1&=- \delta (\rm{(a1)+(a2) + (b1) + (b2)}) + r^1_{\rm{(a)}}+r^1_{\rm{(b)}}\\
&=-\delta Q (\W^0, \W^0) + \delta (\rm{(c1)+(c2) + (d1) + (d2)+(d3)} ) + r^1_{\rm{(a)}}+r^1_{\rm{(b)}}\\
&= -\delta Q (\W^0, \W^0)+r^1,
\end{align*}
where
\begin{align}\label{def:r1}
r^1:=  \delta (\rm{(c1)+(c2) + (d1) + (d2)+(d3)} ) + r^1_{\rm{(a)}}+r^1_{\rm{(b)}}. 
\end{align}
From Table \ref{table1}, Proposition \ref{prop:inta} and Proposition \ref{prop:intb}, we deduce that 
\begin{align*}
\|r^1\|_{L^2} \le O(\delta \|\rm{(c1)}\|_{L^2}) = O(\delta\eps^\frac 16\sigma^{-2})=O(\delta \eps^{\frac 16-2\mu}). 
\end{align*}

\section{Higher-order Approximation}\label{ltc}
A nice advantage of space-localized beam waves with respect to packets of waves in \cite{BDSR19} is the possibility to construct an approximate solution $\W^{\rm{app}}$ (in the previous sections) that is the some of a finite number of terms, \emph{each of them being physically relevant}. Every term of $\W^{\rm{app}}$ in Theorem \ref{thm:main1} is indeed an approximate solution of \eqref{eq:system}, with a remainder in $L^2$ which is strictly smaller than its $L^2$ size. 
We recall that in \cite{BDSR19}, the linear mean flow corrector $\W_{\rm{MF}}^1$ (Theorem \ref{thm:main1}) had a too singular amplitude combined with a too slow decay, in such a way that its $L^2$ norm had a priori a bad dependency on the small physical parameter $\eps$. Thus, in \cite{BDSR19}, that mean flow contribution was replaced by a non-physical term, which balances the error of the approximate solution on the boundary but it is not (even an approximate) solution to \eqref{eq:system}. 
In this work, in the previous sections, we succeeded to overcome this problem using (almost) axisymmetric beam waves which are spatially localized, in such a way that we can actually use the true mean flow to contruct $\W^{\rm{app}}$ and we do not introduce any non-physical corrector.
This allows us to push the approximation at next orders, using a nice cancellation of the triadic interactions that was discovered in \cite{DY1999} (see also \cite{Akylas}), and it is due to the \emph{null form structure} of the convection term for incompressible fluids, \cite{Klainermann}. 

This is the content of this last section, whose aim is to prove Theorem \ref{thm:main2}. To this end, we have to identify the next order (triadic) contributions to be corrected, which are originated by the interactions in $Q(\W^1, \W^0), Q(\W^0, \W^1)$, where $\W^0$ is the linear solution given by Proposition \ref{prop:BL}, and $\W^1$ is the corrector provided by Proposition \ref{prop:nonlin}. We recall from Proposition \ref{prop:nonlin} that $\W^1=\W^1_{\rm{MF}} + \W^1_{\rm{II}},$ where $\W^1_{\rm{MF}}$ is a mean flow, so that it has nearly zero time frequency, while $\W^1_{\rm{II}}$ is a second harmonic, so its time frequency is approximately twice the time frequency of the incident wave. Now, as showed in \eqref{eq:matrix-L}, the linear operator $L$ (i.e. the approximation of the linear operator $\mathcal{L}_\eps$ in the scaling of the boundary layer neglecting viscosity and dissipation) of system \eqref{eq:system} has pure imaginary eigenvalues $\pm i \omega_0 = \pm i \sin \gamma$, where $\omega_0$ is nearly the frequency of the incident beam wave. Therefore, when the triadic terms in  $Q(\W^1, \W^0),  Q(\W^0, \W^1)$ oscillate approximately with the same time frequency 
$$\de_t \W^{\rm{app}} + \mathcal{L}_\eps  \W^{\rm{app}} = - \delta ( Q(\W^1, \W^0)+ Q(\W^0, \W^1)),$$
the above source term is \emph{resonant} with respect to the oscillations of the linear part of the system and linear growths in time (secular growths) can appear \cite{DY1999, lannes}. Let us analyze the next order triadic term. We recall from Proposition \ref{prop:nonlin} that 
$$\W^1=\W^1_{\rm{II}} + \W^1_{\rm{MF}} + \W^1_{\rm{II, BL, \eps^{1/2}}}  + \W^1_{\rm{MF, BL, \eps^{1/2}}} + \W^1_{\rm{II, BL, \eps^{1/3}}} + \W^1_{\rm{MF, BL, \eps^{1/3}}}.$$
\begin{table}[h]
\caption{List of triadic interactions}\label{table2}
\centering
\begin{tabular}
{|l|l|l|l|}
\hline 
{} &\scriptsize {\bf type of interaction} &\scriptsize {\bf time frequency } &\scriptsize {\bf typical decay rate  }\\  \hline

{(A1)} &\scriptsize $Q(\W^1_{\rm{II, BL, \eps^{1/3}}}, \W_{\rm{BL}, \eps^{1/3}}^0)$ &\scriptsize $\pm \omega_0; \pm 3\omega_0$ &\scriptsize {$\eps^{-1/3}$ } \\ \hline

{(A2)} &\scriptsize $Q(\W^1_{\rm{MF, BL, \eps^{1/3}}}, \W_{\rm{BL}, \eps^{1/3}}^0)$ &\scriptsize $\pm \omega_0$ &\scriptsize {$\eps^{-1/3}$ }\\ \hline

{(B1)} &\scriptsize $Q(\W^1_{\rm{II}}, \W_{\rm{BL}, \eps^{1/3}}^0)$ &\scriptsize $\pm \omega_0; \pm 3\omega_0$ &\scriptsize {$\eps^{-1/3}$ }\\ \hline

{(B2)} &\scriptsize $Q(\W^1_{\rm{MF}}, \W_{\rm{BL}, \eps^{1/3}}^0)$ &\scriptsize $\pm \omega_0$ &\scriptsize {$\eps^{-1/3}$ }\\ \hline

{(C1)} &\scriptsize $Q(\W^1_{\rm{II, BL, \eps^{1/3}}}, \W_{\rm{BL}, \eps^{1/2}}^0)$ &\scriptsize $\pm \omega_0; \pm 3\omega_0$ &\scriptsize {$\eps^{-1/2}$ }\\ \hline

{(C2)} &\scriptsize $Q(\W^1_{\rm{MF, BL, \eps^{1/3}}}, \W_{\rm{BL}, \eps^{1/2}}^0)$ &\scriptsize $\pm \omega_0$ &\scriptsize {$\eps^{-1/2}$ }\\ \hline

{(D1)} &\scriptsize $Q(\W^1_{\rm{II}}, \W_{\rm{BL}, \eps^{1/2}}^0)$ &\scriptsize $\pm \omega_0; \pm 3\omega_0$ &\scriptsize {$\eps^{-1/2}$ }\\ \hline

{(D2)} &\scriptsize $Q(\W^1_{\rm{MF}}, \W_{\rm{BL}, \eps^{1/2}}^0)$ &\scriptsize $\pm \omega_0$ &\scriptsize {$\eps^{-1/2}$ }\\ \hline

{(E1)} &\scriptsize $Q(\W^1_{\rm{II, BL, \eps^{1/2}}}, \W_{\rm{BL}, \eps^{1/3}}^0)$ &\scriptsize $\pm \omega_0; \pm 3\omega_0$ &\scriptsize {$\eps^{-1/2}$ }\\ \hline

{(E2)} &\scriptsize $Q(\W^1_{\rm{MF, BL, \eps^{1/2}}}, \W_{\rm{BL}, \eps^{1/3}}^0)$ &\scriptsize $\pm \omega_0$ &\scriptsize {$\eps^{-1/2}$ }\\ \hline

{(F1)} &\scriptsize $Q( \W_{\rm{BL}, \eps^{1/2}}^0, \W^1_{\rm{II}})$ &\scriptsize $\pm \omega_0; \pm 3\omega_0$ &\scriptsize {$\eps^{-1/2}$ }\\ \hline

{(F2)} &\scriptsize $Q(\W_{\rm{BL}, \eps^{1/2}}^0, \W^1_{\rm{MF}})$ &\scriptsize $\pm \omega_0$ &\scriptsize {$\eps^{-1/2}$ }\\ \hline

{(G1)} &\scriptsize $Q(\W^1_{\rm{II, BL, \eps^{1/2}}}, \W_{\rm{BL}, \eps^{1/2}}^0)$ &\scriptsize $\pm \omega_0; \pm 3\omega_0$ &\scriptsize {$\eps^{-1/2}$ }\\ \hline

{(G2)} &\scriptsize $Q(\W^1_{\rm{MF, BL, \eps^{1/2}}}, \W_{\rm{BL}, \eps^{1/2}}^0)$ &\scriptsize $\pm \omega_0$ &\scriptsize {$\eps^{-1/2}$ }\\ \hline

{(H1)} &\scriptsize $Q( \W_{\rm{BL}, \eps^{1/3}}^0, \W^1_{\rm{II}})$ &\scriptsize $\pm \omega_0; \pm 3\omega_0$ &\scriptsize {$\eps^{-1/3}$ }\\ \hline

{(H2)} &\scriptsize $Q(\W_{\rm{BL}, \eps^{1/3}}^0, \W^1_{\rm{MF}})$ &\scriptsize $\pm \omega_0$ &\scriptsize {$\eps^{-1/3}$ }\\ \hline

{(I1)} &\scriptsize $Q(\W_{\rm{inc}}^0, \W^1_{\rm{II}})$ &\scriptsize $\pm \omega_0; \pm 3\omega_0 $ &\scriptsize  {no decay }\\ \hline

{(I2)} &\scriptsize $Q(\W_{\rm{inc}}^0, \W^1_{\rm{MF}})$ &\scriptsize $\pm \omega_0$ &\scriptsize  {no decay }\\ \hline

{(L1)} &\scriptsize $Q(\W^1_{\rm{II}}, \W_{\rm{inc}}^0)$ &\scriptsize $\pm \omega_0; \pm 3\omega_0 $ &\scriptsize  {no decay }\\ \hline

{(L2)} &\scriptsize $Q(\W^1_{\rm{MF}}, \W_{\rm{inc}}^0)$ &\scriptsize $\pm \omega_0$ &\scriptsize  {no decay }\\ \hline
\hline

\end{tabular}
\label{Tab:interactions}
\end{table}

All the terms in Table \ref{table2} could potentially generate secular growths. From Proposition \ref{prop:nonlin} and Proposition \ref{prop:BL}, we know that 
\begin{align}
\|\W^1_{\rm{II, BL}, \eps^{1/3}}\|_{L^2(\mathbb{R}_+^2)} & =O( {\delta}\eps^{-\frac 16} \sigma^{-\frac 76}); \quad \|\W^1_{\rm{MF, BL}, \eps^{1/3}}\|_{L^2(\mathbb{R}_+^2)} = O( {\delta}\eps^{-\frac 16} \sigma^{-\frac 76}); \notag \\
 \|\nabla \W^0_{\rm{BL}, \eps^{1/3}}\|_{L^\infty(\mathbb{R}_+^2)} & =O(  \sigma^{-\frac 23}\eps^{-\frac 12}).\label{eq:order} 
 \end{align}
Therefore, we can deduce from Lemma \ref{estbbl} that
\begin{align}\label{ineq:big-rem}
\delta \|(A1)\|_{L^2} \sim \delta  \|(A2)\|_{L^2} = O({\delta^2} \eps^{-\frac 23} \sigma^{-\frac{11}{6}}).
\end{align}
The triadic interactions of Table \ref{table2} have decreasing $L^2$ norm. We look at the last one. 
From Proposition \ref{prop:nonlin}, we know that 
$$ \|\W^1_{\rm{II}}\|_{L^2(\mathbb{R}^2)} \sim \|\W^1_{\rm{MF}}\|_{L^2(\mathbb{R}^2)}  =O(\delta \eps^{-\frac 16} \sigma^{-\frac{10}{3}}); \quad  \|\nabla \W^1_{\rm{MF}}\|_{L^2(\mathbb{R}^2)} \sim \|\nabla \W^1_{\rm{II}}\|_{L^2(\mathbb{R}^2)} = O(\delta \eps^{-\frac 16} \sigma^{-\frac{13}{3}}).$$
We also notice that
$$\|\nabla \W_{\rm{inc}}\|_{L^2} =O(\sigma^{-1}).$$

We can deduce that
$$\delta \|(L1)\|_{L^2} \sim \delta \|(L2)\|_{L^2} = O(\delta^2 \eps^{-\frac 16} \sigma^{-\frac{13}{3}}).$$
We show in the following that we can correct all the terms of the above table. Then the error $\|\widetilde{R}_{\rm{app}} \|_{L^2}$ of our consistent  approximation $\widetilde W^{\rm{app}}$ will be of the size 
stated in Theorem \ref{thm:main2}. 
We finally remark that once the terms of Table \ref{table2} will be corrected in the following, the correctors will be trilinear with time oscillation $\sim 0, \pm 2\omega_0$, and they could give rise to secular growths due to their interaction with $\W^1$ in Proposition \ref{prop:nonlin}.

Lastly, we show how to exploit the nice cancellation in the convection term to correct the first triadic term $(A1)$. Since it would not add anything but long and tedious computations, we omit the treatment of the other terms of Table \ref{table2}.
For $(A1)$, whose decay in $y$ is order $\eps^{\frac 13}$, we exploit the scaling of the boundary layer.
As done in Section 3, we reduce the system \eqref{eq:system} to the equations for $u, b$ ($(U, B)$ denote their Fourier transform). For $\mathcal{V}=(\V_1, \V_2)=(U, B)^T$, the equations assume the general structure
\begin{align*}
\de_t \mathcal{V} + L  \mathcal{V} &= \delta \exp (ilx-i\alpha t - \L y) \mathcal{V}', 
\end{align*}
where 
\begin{align}\label{eq:matrix-L}
L=\sin \gamma \begin{pmatrix}
0 & -1 \\
1 & 0
\end{pmatrix}.
\end{align}

We now introduce (and explain the formal motivation of) a scaling in time. Notice that the boundary layer $\W^0_{\rm{BL}, \eps^{1/3}}$ in Proposition \ref{prop:BL} has a vertical component of the velocity field that is approximately of order $\eps^{\frac 13}$ in $L^2$ or in $L^\infty$. Since the vertical component of the velocity of the boundary layer is $O(\eps^\frac 13)$ in $L^\infty$ and has to balance the boundary contribution of the incident wave of ${O}(1)$, this balance will be effective after a time of the order $\eps^{-\frac 13}$ as the group velocity $c_g=\nabla_{k,m} \omega = O(\kk^{-1})=O(1)$.
This is the formal reason behind the time scaling $t={\tau}{\eps^{-\frac 13}}$, as suggested by \cite{DY1999}.
\bcb{\subsection{Proof of Theorem \ref{thm:main2}}\label{sec:proof2}
We define a scaled time variable $\tau=\eps^{\frac 13}t$, so that $\partial_t = \eps^{\frac 13} \partial_\tau$ and we rewrite the previous compact system as in \eqref{eq:compactsystem}
\begin{equation}
\label{BL2_scaled_compact_system}
\partial_\tau \V+\eps^{-\frac 13} {L}\V=-{\delta}{\eps^{-\frac 13}} Q(\V, \V).
\end{equation}
We recall from Section 3 that $\exp({\eps^{-\frac 13}} L t)=\exp(i \eps^{-\frac 13} {\sin \gamma} \tau ) \Pi_+  + \exp(-i\eps^{-\frac 13} {\sin \gamma} \tau ) \Pi_-$.
Now define the \emph{filtered variable} 
$$\widetilde \V:=\exp(\eps^{-\frac 13} {L \tau }) \V ,$$ so that the system reads 
$$\partial_\tau  \widetilde \V +\eps^{-\frac 13} {\delta}\exp(\eps^{-\frac 13} {Lt})  Q(\exp(-\eps^{-\frac 13}{Lt} }) \widetilde \V , \exp(-\eps^{-\frac 13} {Lt}) \widetilde \V )=0.$$
We also use the notation $\widetilde \V^+=\Pi_+ \widetilde \V, \quad \widetilde \V^-=\Pi_- \widetilde \V=\widetilde \V^+$, where 
$$\Pi_+=\frac 12 \begin{pmatrix} 1 & i \\
-i & 1
\end{pmatrix}; \quad \Pi_-=\bar \Pi_+.$$
Exploiting the mutual orthogonality of the projectors, we have 
\begin{equation}
\label{Eq_V_pm}
\begin{aligned}
\partial_\tau \widetilde \V^\pm & + \frac{\delta}{\eps^{1/3}} \exp(i \frac{i\sin \gamma }{\eps^{1/3}}\tau ) (\widetilde \V _1^+, \, -\partial_y^{-1}\partial_x \widetilde \V _1^+) \cdot \nabla \widetilde \V^\pm\\
&+ \frac{\delta}{\eps^{1/3}} \exp(-i \frac{\sin \gamma}{\eps^{1/3}}\tau ) (\widetilde \V_1^-, \, -\partial_y^{-1}\partial_x \widetilde\V_1^-) \cdot \nabla\widetilde \V^\pm=0.
\end{aligned}
\end{equation}
At this stage, notice that the approximate solution that we built in the previous section, i.e. $\W^{\rm{app}}=\W^0+\W^1$, with $\W^0$ given by Proposition \ref{prop:BL} and $\W^1$ by Proposition \ref{prop:nonlin}, can be written as $\W^{\rm{app}}= \W^0+ \delta \W^{1, \delta}$, since the expression of $\W^1$ in Proposition \ref{prop:nonlin} is multiplied by the weakly nonlinear parameter $\delta$. Notice that quadratic terms in \eqref{Eq_V_pm} are multiplied by $\eps^{-\frac 13} \delta$. This means that our first-order corrector $\delta \W^{1, \delta}$, where the singular factor $\eps^{-\frac 13}$ does not appear, is the result of time integration of the quadratic terms in \eqref{Eq_V_pm}. Thus, in order to push the approximation at the next order and decrease its error, we expect to add a corrector of the form $\delta^2 \W^2$. We denote the approximate solution of the previous section in filtered form as $$\widetilde \V^{\rm{app}}_0=\widetilde \V^0+\delta \widetilde \V^1.$$
Now, we want an approximate solution of the form $$\widetilde \V^{\rm{app}}_1=\widetilde \V^0+\delta \widetilde \V^1+ \delta^2\widetilde \V^2,$$ where the last term has to be determined, while we recall from the previous section that $\widetilde \V^1$ solves the equation
$$\de_\tau \widetilde \V^1+\frac{\delta}{\eps^{1/3}} \exp(\frac{Lt}{\eps^{1/3}})  Q(\exp(-\frac{Lt}{\eps^{1/3} }) \widetilde \V^0 , \exp(-\frac{Lt}{\eps^{1/3}}) \widetilde \V^0 )=0.$$
To shorten the notation, we denote 
$$\B(\V, \V):=  \exp(\frac{Lt}{\eps^{1/3}})  Q(\exp(-\frac{Lt}{\eps^{1/3} })  \V , \exp(-\frac{Lt}{\eps^{1/3}}) \V ),$$
so that the system reads
$$\de_\tau \V + \frac{\delta}{\eps^{1/3}} \B(\V, \V)=0.$$
As we look for the next order corrector $\widetilde \V^2$, we plug the ansatz $\widetilde \V^{\rm{app}}_1=\widetilde \V^{\rm{app}}_0+ \delta^2 \widetilde \V^2$ inside the compact system, and we obtain  
\begin{align}
\de_\tau \widetilde \V^{\rm{app}}_0 &+ \delta^2 \de_\tau \widetilde \V^2 + \frac{\delta}{\eps^{1/3}}\B(\widetilde \V^0, \widetilde \V^0) +  \frac{\delta^2}{\eps^{1/3}}(\B(\widetilde \V^0, \widetilde \V^1)+ \B(\widetilde \V^1, \widetilde \V^0)) \notag \\
& + \frac{\delta^3}{\eps^{1/3}}(\B(\widetilde \V^0, \widetilde \V^2)+ \B(\widetilde \V^2, \widetilde \V^0)) + \frac{\delta^4}{\eps^{1/3}}(\B(\widetilde \V^2, \widetilde \V^1)+ \B(\widetilde \V^1, \widetilde \V^2)) + \frac{\delta^5}{\eps^{1/3}}\B(\widetilde \V^2, \widetilde \V^2)=0.\label{eq:app}
\end{align}
As already pointed out above, $\widetilde \V^{\rm{app}}$, constructed in the previous section, solves $ \de_\tau \widetilde \V^{\rm{app}}_0  + \frac{\delta}{\eps^{1/3}}\B(\widetilde \V^0, \widetilde \V^0)=0$. 

\noindent The next order approximation is obtained by requiring that $\widetilde \V^2$ solves
\begin{align}\label{eq:tV2}
\delta^2 \de_\tau \widetilde \V^2+  \frac{\delta^2}{\eps^{1/3}}(\B(\widetilde \V^0, \widetilde \V^1)+ \B(\widetilde \V^1, \widetilde \V^0))=0.
\end{align}
Recalling the definition of the bilinear form $\B(\cdot, \cdot)$ and equation \eqref{Eq_V_pm}, we write more explicitly, with the notation $\widetilde \V^{\iota, \pm}=(\widetilde \V^{\iota, \pm}, \widetilde \V^{\iota, \pm})$, $\iota \in \{0, 1\}$, that
\begin{align}
\delta^2 \de_\tau \widetilde {\V}^{2, \pm} 
& + \frac{\delta^2}{\eps^{1/3}} \exp(i \frac{\sin \gamma }{\eps^{1/3}}\tau ) (\widetilde \V _1^{0,+}, \, -\partial_y^{-1}\partial_x \widetilde \V_1^{0,+}) \cdot \nabla \widetilde \V^{1,\pm} \quad \rm{(a)} \notag\\
&+ \frac{\delta^2}{\eps^{1/3}} \exp(-i \frac{\sin \gamma}{\eps^{1/3}}\tau ) (\widetilde \V_1^{0,-}, \, -\partial_y^{-1}\partial_x \widetilde \V_1^{0,-}) \cdot \nabla\widetilde \V^{1,\pm}\quad \rm{(b)}\notag\\
& + \frac{\delta^2}{\eps^{1/3}} \exp(i \frac{\sin \gamma }{\eps^{1/3}}\tau ) (\widetilde \V _1^{1,+}, \, -\partial_y^{-1}\partial_x \widetilde \V _1^{1,+}) \cdot \nabla \widetilde \V^{0,\pm}\quad \rm{(c)}\notag\\
&+ \frac{\delta^2}{\eps^{1/3}} \exp(-i \frac{\sin \gamma}{\eps^{1/3}}\tau ) (\widetilde \V_1^{1,-}, \, -\partial_y^{-1}\partial_x \widetilde \V_1^{1,-}) \cdot \nabla\widetilde \V^{0,\pm} =0.\quad \rm{(d)} \label{eq:V2}
\end{align}
Now observe that, while $\widetilde \V^{0, \pm}$ does not oscillate in time, since its time oscillations have been filtered by means of the change of variable, the next order $\widetilde \V^{1, \pm}$ has time oscillations. More precisely, coming back to the non-filtered variable $ \V$, it holds 
\begin{align}\label{eq:V1next}
\widetilde \V^{1, \pm} = \exp(\pm i \eps^{-\frac 13} ({\sin \gamma}) \tau) \V^{1, \pm},
\end{align}
where $ \V^{1, \pm}= \V_{\rm{II}}^{1, \pm}+ \V_{\rm{MF}}^{1, \pm}$ constructed in the previous section contains a \emph{second hamornic} and a \emph{mean flow} contribution, which oscillate in time with frequency $\omega_{\rm{II}} \sim \pm 2 \sin \gamma, \; \omega_{\rm{MF}}  \sim 0$, thanks to the angular localization of our wave beam (due to the angular localization given by $\widehat{\Psi}_{\eps, \sigma}$ in \eqref{def:psi}). Therefore we deduce that $\widetilde \V^{1, \pm}$ in \eqref{eq:V1next} contains contributions which oscillate in time with frequency $\sim \pm {(\sin \gamma)}{\eps^{-\frac 13}}, \, \pm 3{(\sin \gamma)}{\eps^{-\frac 1 3}}.$ The dangerous point is that in \eqref{eq:V2} the oscillations $\exp(\pm \eps^{-\frac 13}{\sin \gamma})$ hit $\widetilde \V^{1, \pm}$. As some terms of $\widetilde \V^{1, \pm}$ oscillate with frequency $\pm {(\sin \gamma)}{\eps^{-\frac 13}}$, there are some non-oscillating terms (resonances) in equation \eqref{eq:V2}. When this is the case, $\widetilde \V^2$ satisfies an equation of the form
$$\de_\tau \widetilde \V^2 + f(\widetilde \V^{\rm{app}})_{\rm{MF}} + (\text{oscillating terms})=0,$$
where $f(\widetilde \V^{\rm{app}})_{\rm{MF}}$ is generic a function of a non-oscillating contribution in time. 
The solutions to this equations has a linear growth in time, which is often named \emph{secular growth} \cite{DY1999}. The only hope to avoid this growth, which would prevent ourselves to be able to obtain an improved approximate solution, is that the non-oscillating terms cancel thanks to the structure of the quadratic/bilinear form. This is precisely what happens in this context, as we show below. We also point out that, even though at this stage this can appear as a miraculous cancellation, it is actually related to the \emph{null form structure} of the convection term for incompressible fluids, see for instance \cite{Klainermann}. 
Now, in equation \eqref{eq:V2}, we recall that, in view of \eqref{Eq_V_pm}, 
\begin{align*}
\widetilde \V^{1, \pm}\sim & -\delta \exp(i \frac{\sin \gamma}{\eps^{1/3}} \tau) (\widetilde \V_1^{0, +}, -\de_{y}^{-1} \de_x \widetilde \V_1^{0, +}) \cdot \nabla \widetilde \V^{0, \pm}\\
&  -\delta \exp(-i \frac{\sin \gamma}{\eps^{1/3}} \tau) (\widetilde \V_1^{0, -}, -\de_{y}^{-1} \de_x \widetilde \V_1^{0, -}) \cdot \nabla \widetilde \V^{0, \pm}.
\end{align*}
Plugging this expression inside \eqref{eq:V2}, after some computations we obtain that 
\begin{align*}
\rm{(a)+(c)}&= [\de_{y}^{-1} (\nabla^\perp \widetilde V_1^{0, -} \cdot \nabla  \widetilde V_1^{0, +}), -  \de_{y}^{-1}\de_x \de_y^{-1}(\nabla^\perp \widetilde V_1^{0, -} \cdot \nabla  \widetilde V_1^{0, +})] \\
& \quad + [((\nabla^\perp \de_{y}^{-1}\widetilde V_1^{0, -} \otimes \nabla^\perp  \de_{y}^{-1} \widetilde V_1^{0, +}) \cdot \nabla] \cdot \nabla \widetilde V^{0, \pm} + \rm{(oscillating\,  terms)}; \\
\rm{(b)+(d)}&=-(\rm{(a)+(c)})+ \rm{(oscillating\,  terms)}.
\end{align*}
Therefore the non-oscillating (resonant) terms cancel out and we can correct the oscillating ones solving the equation for $\widetilde \V^2$. This implies that we can actually construct 
$$\widetilde \V^{\rm{app}}_1=\widetilde \V^0+\delta \widetilde \V^1+ \delta^2\widetilde \V^2,$$
with $\widetilde \V^2$ solving \eqref{eq:tV2}. Looking at \eqref{eq:app}, the potential secular growths are in the second non-corrected term, i.e. $ {\delta^4}{\eps^{-\frac 13}}(\B(\widetilde \V^2, \widetilde \V^1)+ \B(\tilde \V^1, \tilde \V^2))$. In fact, adding $\widetilde \V^2$ to the approximate solution, the first non-corrected term, given by $ {\delta^3}{\eps^{-\frac 13}}(\B(\widetilde \V^0, \widetilde \V^2)+ \B(\widetilde \V^2, \widetilde \V^0)) ,$ oscillates in time (this can be easily verified recalling that $\widetilde \V^2$ contains the oscillations $0, \pm 2, \pm 4$ and therefore cannot generate any secular growth). Finally, recalling that we corrected (A1) and (A2) in Table \ref{table2}, we claim that all the terms of Table \ref{table2} can be corrected exactly in the same way, so that using the estimates \eqref{eq:order}, we obtain that in the original time-scale $t=\eps^{-\frac 13} \tau$ the worst remainder is estimated by Lemma \ref{estbbl} as
\begin{align*}
\|\widetilde{R}_{\rm{app}} \|_{L^2} & =o( \delta^3 t \|w_{\rm{BL, \eps^\frac 13}} \de_y u_{\rm{BL, \eps^\frac 13}} \times w_{\rm{BL, \eps^\frac 13}} \de_{yy}u_{\rm{BL, \eps^\frac 13}}\|_{L^2})\\
& =o(\delta^3 t \|w_{\rm{BL, \eps^\frac 13}} \de_y u_{\rm{BL, \eps^\frac 13}}\|_{L^\infty} \|w_{\rm{BL, \eps^\frac 13}} \de_{yy}u_{\rm{BL, \eps^\frac 13}}\|_{L^2})\\
&=o(\delta^3 \eps^{-\frac 56} \sigma^{-\frac{19}{6}}t),
\end{align*}
where $w_{\rm{BL, \eps^\frac 13}}$ is a \blbw of order $(\frac 13 , \frac 13, 0, 0)$, $\de_y u_{\rm{BL, \eps^\frac 13}}$ is a \blbw of order $(\frac 13, \frac 13, - \frac 23, - \frac 13)$ and $\de_{yy} u_{\rm{BL, \eps^\frac 13}}$ is a \blbw of order $(\frac 13, \frac 13, -1, 0)$.

\begin{appendix}
\bcr{
\section{Proof of Lemma \ref{estbbl}}\label{prooflemma}
By definition \eqref{defbeam}, $v_{\rm{beam}}^t$ is the Fourier transform of the function
\begin{align}\label{functiong}
g(\k)=\sqrt{2\pi}\frac{\sigma}{\eps^{1/6}} \left( \widehat{\Psi}_{\eps,\sigma}({\kk}, \theta) e^{-i\omega t}\right)|_{{\kk}\geq \eta}
\end{align}
where $\left(\kk, \frac \pi 2-(\theta+\gamma)\right)$ are the polar coordinates of $\k$ in the rotated reference system $(x,y)$ for $\theta\sim \gamma$ (i.e. the first addend inside the integral \eqref{defbeam}, given by the first term of $\widehat{\Psi}_{\eps, \sigma} (\kk, \theta)$ in Definition \ref{def:inc-beam}, while the second addend is just the complex conjugate of the first one, yielding  $\theta \sim \gamma+\pi $). We focus on the first term $\theta \sim \gamma$ and from now on, we use the shortened notation $$\widehat{\Psi}_{\eps, \sigma} (\kk, \theta)=\G_{\eps, \sigma} (\sigma \kk, \eps^{-1/3}(\theta-\gamma)) O(\eps^\nn \kk^\ta)$$ for $\widehat{\Psi}_{\eps, \sigma}$ in Definition \ref{def:inc-beam}. By the Plancherel equality, one has
\begin{eqnarray}
\|v_{\rm{beam}}^t\|_{L^2}^2&=& 2
\pi\frac{\sigma^2}{\eps^{1/3}}
\int_{\et}^\infty \int_0^{2\pi}  \G_{\sigma,\eps}^2 (\sigma \kk, \eps^{-1/3}(\theta-\gamma))O(\eps^{2\nn} {\kk}^{2\ta}){\kk} d{\kk} \nonumber\\
&=&
O(\eps^{2\nn})2
\pi\frac{\sigma^2}{\eps^{1/3}}
\int_{\et}^\infty \int_0^{2\pi}  \G_{\sigma,\eps}^2 (\sigma \kk, \eps^{-1/3}(\theta-\gamma)) {\kk}^{1+2\ta} d{\kk} d\theta,\nonumber
\nonumber\\
\end{eqnarray}
and the estimate easily follows. For the boundary layer term, one first notices that
\begin{align}\label{ineq:BLappendix}
\|v_{\rm{BL}}^t\|_{L^2}\leq \sqrt{2
\pi}\frac{\sigma}{\eps^{1/6}}
\int_0^{2\pi}
\left\|
\int_{\et}^\infty  { \G_{\sigma,\eps} (\sigma \kk, \eps^{-1/3}(\theta-\gamma))O(\eps^{\nn} {\kk}^{\ta})}
e^{- i\omega t +  i {\kk} \sin (\theta+\gamma) (x-x_0) {-\L(k)^{\alpha, \beta}y}}
{\kk} d{\kk}\right\|_{L^2_{x,y}} d\theta.
\end{align}

By the Plancherel equality again, one has
\begin{eqnarray}
&&
\left\|
\int_{\et}^\infty   \G_{\sigma,\eps} (\sigma \kk, \eps^{-1/3}(\theta-\gamma))O(\eps^{\nn} {\kk}^{\ta})
e^{- i\omega t +  i {\kk} \sin (\theta+\gamma) (x-x_0)  - {\L(k)^{\alpha, \beta}y}}
{\kk} d{\kk}\right\|_{L^2_{x}}^2\nonumber\\ 
&\leq&O(\eps^{2\nn})
\tfrac1{\sin (\theta+\gamma)^2}
\int_\et^\infty
\G' (\sigma \kk, \eps^{-1/3} (\theta-\gamma))^2
{\kk}^{2\ta+2}e^{-2{\Re({\L}(k)^{\alpha, \beta}})y}d{\kk}.\nonumber
\end{eqnarray}
Indeed, the function
\begin{align}
\widehat{f}(\kk):=\sqrt{2\pi}\int_{\et}^\infty   \G_{\sigma,\eps} (\sigma \kk, \eps^{-1/3}(\theta-\gamma))\kk^{\ta+1}
e^{- i\omega t +  i {\kk} \sin (\theta+\gamma) (x-x_0)  -\L(k)^{\alpha, \beta} y}
 d{\kk}
\end{align} 
is nothing but the Fourier transform in ${\kk}$ of the function
\begin{align}\label{eq:functionf}
 f(\kk):=\G_{\sigma,\eps} (\sigma \kk, \eps^{-1/3}(\theta-\gamma))\kk^{\ta+1}
e^{- i\omega t + i {\kk} \sin (\theta+\gamma) x_0-\L(k)^{\alpha, \beta} y}
\end{align}
evaluated at the point $\sin (\theta+\gamma) x$.
It is enough now to notice that
\begin{eqnarray}
 && \int_\et^\infty
{\G_{\eps, \sigma}(\sigma \kk, \eps^{-1/3} (\theta-\gamma))^2}
{\kk}^{2\ta+2}e^{-2\Re(\L(k)^{\alpha, \beta})y}d{\kk}\nonumber\\
&\leq&
e^{-2\inf\limits_{k}\Re(\L(k)^{\alpha, \beta})y}
 \int_\et^\infty
{\G_{\eps, \sigma} (\sigma \kk, \eps^{-1/3} (\theta-\gamma))^2}
{\kk}^{2\ta+2}d{\kk}\nonumber\\
&=&
e^{-2\inf\limits_{k}\Re(\L(k)^{\alpha, \beta})y} \sigma^{-2\ta-3}{\int_\et^\infty \G'(p, \eps^{-1/3}(\theta-\gamma))^2 \, p^{2\ta+2} \, dp}.\nonumber
\end{eqnarray}
{Thanks to the assumptions on $\L(k)^{\alpha, \beta}$ in Definition \ref{def:inc-beam}, there exists $\tilde \ell$ such that $\Re(\tilde \ell) >0$ and $\inf_{k} \Re (\L(k)^{\alpha, \beta}) \ge \Re(\tilde \ell) \frac{\kk^\beta}{\eps^{\alpha}} \ge  \Re(\tilde \ell) \frac{\et^\beta}{\eps^{\alpha}}$, so that, integrating in $y$, one has }

$$
\| e^{-2\Re(\L(k)^{\alpha, \beta}) y}\|_{L^2_y}^2 \le  \|e^{-2\Re{(\tilde \ell)}\frac{\et^\beta}{\eps^\alpha}y}\|^2_{L^2_y}
=
O(\eps^\alpha).
$$

Integrating in $\theta$, it yields 
$$
\int_0^{2\pi}
{ \frac{1}{\sin (\theta+\gamma)^2} \int_\et^\infty \G_{\eps, \sigma} (p, \eps^{-1/3}(\theta-\gamma))^2 \, p^{2\ta+2} \, dp} \, d\theta=O(\eps^\frac13),
$$
and finally, we obtain
$$
\|v_{\rm{BL}}^t\|_{L^2_{x,y}}=O(\eps^{\frac16+\nn+\frac\alpha2}\sigma^{-\ta-\frac12}).
$$

Note that 
when $\alpha=\tfrac13,\  \ta=-\frac23,\ \nn=-\frac13$ (the widest boundary layer of Proposition \ref{prop:BL}),  
we get
$$
\|v_{\rm{BL}}^t\|_{L^2}=O(\sigma^{\frac16}).
$$

}

For the $L^\infty$ norm of the wave beam, simply notice that
\begin{align*}
\|v_{\rm{beam}}^t\|_{L^\infty(\R^2_+)} & \le \eps^{\nn-\frac 16} \sigma \int_0^{2\pi} \int_\et^\infty \G_{\sigma,\eps} (\sigma \kk, \eps^{-1/3}(\theta-\gamma)) \, {\kk}^{1+\ta} \, d{\kk} \, d\theta= O(\eps^{\nn+\frac 1 6} \sigma^{- \ta-1}).
\end{align*}
Similarly, the $L^\infty$ norm of the wave beam boundary layer yields 
\begin{align*}
\|v_{\rm{BL}}^t\|_{L^\infty(\R^2_+)} & \le  \eps^{\nn-\frac 16} \sigma \int_0^{2\pi} \int_\et^\infty \G_{\sigma,\eps} (\sigma \kk, \eps^{-1/3}(\theta-\gamma))\, {\sup_{y\geq 0}(e^{-\Re (\L(k)^{\alpha, \beta}) y)}} \, {\kk}^{1+\ta} \, d{\kk} \, d\theta\\
& \le   \eps^{\nn-\frac 16} \sigma 
 \int_\et^\infty \G_{\sigma,\eps} (\sigma \kk, \eps^{-1/3}(\theta-\gamma))\,  {\kk}^{1+\ta} \, d{\kk} \, d\theta = O (\eps^{\nn+\frac 16} \sigma^{-\ta-1}). 
\end{align*}

We further prove the remaining claims.

(i) Notice from \eqref{defbeam} that 
$$\de_x v_{\rm{beam}}^t = \frac{\sigma}{\eps^{1/6}} \int_\eta^\infty \int_0^{2\pi} i {\kk}\sin(\theta+\gamma) \G_{\sigma,\eps} (\sigma \kk, \eps^{-1/3}(\theta-\gamma))O(\eps^\nn \kk^\ta) e^{-i\omega t + i{\kk} \sin(\theta+\gamma) (x-x_0)+i{\kk} \cos(\theta+\gamma) y} {\kk} \, d{\kk} d\theta.$$

Therefore $|i {\kk}\sin(\theta+\gamma)\G_{\sigma,\eps} (\sigma \kk, \eps^{-1/3}(\theta-\gamma)) O(\eps^\nn \kk^\ta)|=O(\eps^\nn\kk^{\ta+1})$ so that, by Definition \ref{def:inc-beam}, $\de_x \W_{\rm{beam}}^t$ is \bw of order $(\nn,\ta+1)$. The same argument works for  $\de_y \W_{\rm{beam}}^t$.

(ii) The conclusion follows by applying exactly the same reasoning as for (i).\\

(iii) To estimate the product of two \blbw of order $(\alpha, \beta, \nn, \ta)$ and $(\alpha', \beta', \nn', \ta')$ respectively, it is enough to notice that
\begin{align*}
\|v_{\rm{BL}}^t \times v_{\rm{BL}}'^t\|_{L^2(\R \times \R_+)} &\le \min\{ \|v_{\rm{BL}}^t\|_{L^2(\R \times \R_+)} \|v_{\rm{BL}}'^t\|_{L^\infty(\R \times \R_+)}, \,  \|v_{\rm{BL}}^t\|_{L^\infty(\R \times \R_+)} \|v_{\rm{BL}}'^t\|_{L^2(\R \times \R_+)}\}\\
& = O(\eps^{\nn+\nn'+\frac 13 + \frac{\max\{\alpha, \alpha'\}}{2}} \sigma^{-\ta-\ta'-\frac 32}),
\end{align*}
where the estimates in \eqref{eq:estimatenorm} have been applied. \\
The product of a \blbw $v_{\rm{BL}}^t$ and a \bw $v_{\rm{beam}}^t$, and of two \bw  $v_{\rm{beam}}^t,\  v_{\rm{beam}}'^t$ are estimated exactly in the same way.
\section{A useful lemma}\label{lemasypm}
The proofs of Lemmata \ref{lem:asym}, \ref{lem:asym1} and \ref{lem:asym3} rely on the following simple and useful result.
\begin{lemma}\label{lem:roots}
Let $g
$ be any complex polynomial. Let us suppose that, for some $\mu_0$,
$$
g(\mu_0)\leq\frac{|g'(\mu_0)|^2}{4\sup\limits_{|\nu|\leq 2\frac{|g(\mu_0)|}{|g'(\mu_0)|}}
|g''(\nu)|}.
$$
Then there exists a unique root $\mu
$ of $g$ such that
$$
|\mu-\mu_0|\leq 2\frac{|g(\mu_0)|}{|g'(\mu_0)|}.
$$
\end{lemma}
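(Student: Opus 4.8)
The plan is to prove this as a quantitative (one-variable, complex) version of the inverse function / Newton–Kantorovich theorem, realized most cleanly through Rouch\'e's theorem by comparing $g$ with its linearization at $\mu_0$. Throughout I interpret the hypothesis as $|g(\mu_0)|\le |g'(\mu_0)|^2/(4M)$, where $M:=\sup|g''|$ over the relevant disk centered at $\mu_0$; note that the statement is only meaningful when $g'(\mu_0)\neq 0$ (otherwise the quantity $|g(\mu_0)|/|g'(\mu_0)|$ is undefined), so I take this as implicit.

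First I would set $\rho:=|g(\mu_0)|/|g'(\mu_0)|$ and $R:=2\rho$, and introduce the affine polynomial $h(\mu):=g(\mu_0)+g'(\mu_0)(\mu-\mu_0)$. Its unique zero is $\mu_\ast:=\mu_0-g(\mu_0)/g'(\mu_0)$, which sits at distance exactly $\rho=R/2<R$ from $\mu_0$; in particular $h$ has precisely one zero inside the open disk $\{|\mu-\mu_0|<R\}$. Next I would control the difference $g-h$ by the second-order Taylor remainder: since $g$ is holomorphic, for $|\mu-\mu_0|\le R$ one has $g(\mu)-h(\mu)=(\mu-\mu_0)^2\int_0^1(1-s)\,g''(\mu_0+s(\mu-\mu_0))\,ds$, so that, the segment joining $\mu_0$ to $\mu$ lying in $\{|\nu-\mu_0|\le R\}$,
\[
|g(\mu)-h(\mu)|\le \tfrac12 M\,|\mu-\mu_0|^2 .
\]

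The heart of the argument is then the comparison on the boundary circle $|\mu-\mu_0|=R$. There I would lower-bound the linearization by $|h(\mu)|\ge |g'(\mu_0)|R-|g(\mu_0)|=2\rho|g'(\mu_0)|-\rho|g'(\mu_0)|=\rho|g'(\mu_0)|$, while the remainder estimate gives $|g(\mu)-h(\mu)|\le \tfrac12 M R^2=2M\rho^2$. The hypothesis is exactly what turns this into a strict Rouch\'e inequality: from $|g(\mu_0)|\le |g'(\mu_0)|^2/(4M)$ one gets $2M\rho=2M|g(\mu_0)|/|g'(\mu_0)|\le \tfrac12|g'(\mu_0)|<|g'(\mu_0)|$, whence $|g-h|\le 2M\rho^2=\rho(2M\rho)\le \tfrac12\rho|g'(\mu_0)|<\rho|g'(\mu_0)|\le |h|$ on the whole circle. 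By Rouch\'e's theorem $g$ and $h$ have the same number of zeros (with multiplicity) in $\{|\mu-\mu_0|<R\}$, namely exactly one, which yields both the existence and the uniqueness of a root $\mu$ with $|\mu-\mu_0|\le R=2|g(\mu_0)|/|g'(\mu_0)|$.

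The only points requiring care are bookkeeping rather than genuine difficulty: I must verify that the segment used in the Taylor remainder stays inside the disk on which $M$ bounds $g''$ (immediate, as that disk is convex and centered at $\mu_0$), and I should dispose of the degenerate case $M=0$ separately. When $M=0$ the polynomial $g$ is affine on the disk, so $g\equiv h$ and the unique root is $\mu_\ast$ with $|\mu_\ast-\mu_0|=\rho\le R$, consistent with the stated bound. Thus I expect no substantial obstacle; the factor $4$ in the hypothesis is precisely the margin needed to pass from the Taylor bound to the strict inequality $|g-h|<|h|$ on the boundary.
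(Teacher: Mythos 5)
Your proof is correct, and it takes a genuinely different route from the paper's. The paper normalizes $g$ to $f(x)=(g(x+\mu_0)-g(\mu_0))/g'(\mu_0)$ and invokes a quantitative local inverse function lemma from Lang (a contraction/Newton--Kantorovich type statement: if $f'$ has small oscillation of size $s_0\le\tfrac12$ on a disk of radius $r=2|g(\mu_0)|/|g'(\mu_0)|$, then $f$ is injective there and attains the target value $-g(\mu_0)/g'(\mu_0)$ exactly once), with the hypothesis on $g''$ used precisely to bound that oscillation via the mean value inequality. You instead compare $g$ with its linearization $h$ at $\mu_0$ on the circle $|\mu-\mu_0|=2|g(\mu_0)|/|g'(\mu_0)|$, bound the difference by the second-order Taylor remainder, and conclude by Rouch\'e; your bookkeeping ($|h|\ge\rho|g'(\mu_0)|$ on the circle, $|g-h|\le 2M\rho^2\le\tfrac12\rho|g'(\mu_0)|$ by the hypothesis) is accurate, and the strict boundary inequality also excludes roots on the circle itself, so uniqueness in the closed disk follows. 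Your approach is self-contained and exploits holomorphy (so it gives the exact zero count, with multiplicity, for free), whereas the paper's argument is real-analytic in flavor and would survive without analyticity; both hinge on the same hypothesis in essentially the same role. You also correctly flag, and resolve sensibly, the implicit conventions in the statement (absolute value on $g(\mu_0)$, $g'(\mu_0)\neq0$, and the supremum of $|g''|$ taken over the disk centered at $\mu_0$ rather than at the origin — the reading needed for either proof to go through), and your separate treatment of the degenerate case $M=0$ is fine.
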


\begin{proof}
Let $\eps_0:=g(\mu_0),\ \alpha_0:=g'(\mu_o)$ and
$$f(x)=\frac{g(x+\mu_0)-\eps_0}{\alpha_0}.
$$
Then, 
$$
f(0)=0,\ f'(0)=1.
$$
Therefore $f$ satisfies the hypothesis of \cite[Lemma 1.3\ p. 130]{lang}
which can be formulated as:
\vskip 0.5cm
``if $\sup\limits_{|x|,|z|\leq r}|f'(x)-f'(z)|\leq s(r)<1$ and $|y|\leq (1-s(r))r$, then $\exists ! $x$,\ |x|\leq r$ such that $y=f(x)$." 
\vskip 0.5cm
 We have
$$
f(x)=y\Longleftrightarrow 
g(\mu_0+x)=y\alpha_0+\eps_0
$$
so that
$$
g(\mu_0+x)=0\Longleftrightarrow y=-\frac{\eps_0}{\alpha_0}.
$$
Therefore, the lemma is proved as soon as one can find 
$s_0<1$ such that
\be\label{thetruc}
\sup\limits_{|x|,|z|\leq 2\frac{|\eps_0|}{|\alpha_0|}}|f'(x)-f'(z)|\leq s_0<1\mbox{  and } \frac{|\eps_0|}{|\alpha_0|}\leq(1-s_0)2\frac{|\eps_0|}{|\alpha_0|}\Longleftrightarrow
 s_0\leq\tfrac12.
\ee
Defining $s_0$ by
$$
s_0:=\tfrac1{|\alpha_0|} 2r\sup\limits_{|z|
\leq 2\frac{|\eps_0|}{|\alpha_0|}}|g''(z)|
\geq
\tfrac1{|\alpha_0|}\sup_{|x|,|z|\leq 2\frac{|\eps_0|}{|\alpha_0|}}|g'(x+\mu_o)-g'(z+\mu_o)|
\geq 
\sup_{|x|,|z|\leq 2\frac{|\eps_0|}{|\alpha_0|}}|f'(x)-f'(z)|,
$$
we find that \eqref{thetruc} is satisfied as soon as $\alpha_0\leq\tfrac12$ 
that is
$$
\tfrac{2|\eps_0|}{|\alpha_0|^2} \sup\limits_{|z|
\leq 2\frac{|\eps_0|}{|\alpha_0|}}|g''(z)|\leq\tfrac12
\Longleftrightarrow \eps_0\leq
\frac{\alpha_0^2}{4\sup\limits_{|z|\leq 2\frac{\eps_0}{\alpha_0}}|g''(z)|}
\Longleftrightarrow
g(\mu_0)\leq\frac{|g'(\mu_0)|^2}{4\sup\limits_{|\nu|\leq 2\frac{|g(\mu_0)|}{|g'(\mu_0)|}}
|g''(\nu)|}.
$$
\end{proof}
\end{appendix}
\section*{Acknowledgment}
RB is partially supported by the GNAMPA group of INdAM and the Royal Society International Exchange Grant. This work has received funding from the PRIN Project (MUR, Italy) n. 20204NT8W4 \textit{Nonlinear evolution PDEs, fluid dynamics and transport equations: theoretical foundations and applications}. 
TP acknowledges Istituto per le Applicazioni del Calcolo (IAC-CNR) for hospitality and financial support. 
Some computations in Section \ref{sec:proof2} have been realized, in the framework of plane waves, when RB was holding a postdoc position under the mentorship of A.-L. Dalibard and L. Saint-Raymond: RB warmly thanks both of them for their support. 
Finally, we both thank Thierry Dauxois for several discussions concerning experiments related to this work.

\end{document}